\begin{document}
\maketitle

\section{Introduction}

The Mittag-Leffler function is an entire function defined by a power series
\begin{equation*}
  E_{\rho}(z)=\sum_{k=0}^{\infty}\frac{z^k}{\Gamma(1+k/\rho)},\quad \rho>0,\quad z\in\C,
\end{equation*}
where $\Gamma(x)$ is the Gamma-function. This function was introduced by Mittag-Leffler in a number of works published from 1902 to 1905 in connection with  his development of a method of summing divergent series. For~more detailed information on the content of these works and on the history of the introduction of the Mittag-Leffler function, we refer the reader to the book~\cite{Gorenflo2014} (see chapter 2 in~\cite{Gorenflo2014}). The~function itself $E_{\rho}(z)$ was introduced in the work~\cite{Mittag-Leffler1903}. In~the paper~\cite{Mittag-Leffler1905}, the integral representation for this function was obtained that expresses its value in terms of the contour~integral.

In this paper the two parameter Mittag-Leffler function
\begin{equation}\label{eq:MLF_gen}
  E_{\rho,\mu}(z)=\sum_{k=0}^{\infty}\frac{z^k}{\Gamma(\mu+k/\rho)},\quad \rho>0,\quad \mu\in\C,\quad z\in\C
\end{equation}
is studied. This function was first introduced by A. Wiman  in 1905~\cite{Wiman1905,Wiman1905a}. Later in  1953 this function was rediscovered in the works of  Humbert  and Agarval~\cite{zbMATH03082752,zbMATH03078845,zbMATH03081895}. A~new function was introduced by replacing the additive unit in the Gamma function argument in $E_{\rho}(z)$  for an arbitrary complex parameter $\mu$. At~the same time, irrespective of  Humbert  and Agarval, the~function  (\ref{eq:MLF_gen}) was studied by M.M. Djrbashian  in the papers~\cite{Dzhrbashyan1954_eng,Dzhrbashian1954b_eng} (see also~\cite{Dzhrbashyan1966_eng} Chapter 3, \S 2, 4).  As~we can see, the~two parameter Mittag-Leffler function $E_{\rho,\mu}(z)$ is connected with the classic Mittag-Leffler function  $E_{\rho}(z)$ by a simple relation $E_{\rho,1}(z)=E_{\rho}(z)$. For~more detailed information on the properties of the function  $E_{\rho,\mu} (z)$ we refer the reader to the book~\cite{Gorenflo2014}, as~well as to other review works~\cite{Popov2013,Rogosin2015,Gorenflo2019}. In~this paper, integral representations of the function $E_{\rho,\mu}(z)$ will be obtained and~studied.

The integral representation of the Mittag-Leffler function is important from the point of view of its practical use, as~well as for studying the asymptotic properties and zeros of this function. The~integral representation expressed through the contour integral is used for these purposes. Several such representations are known for the Mittag-Leffler function. One of the earliest integral representations of the function $E_{\rho,\mu}(z)$ was given in the book~\cite{Bateman_V3_1955} (see \S18.1, formula~(20)).  Further development of the issue of the integral representation of the Mittag-Leffler function and the study of its asymptotic properties was carried out in the works of M.M. Djrbashian. In~the work~\cite{Dzhrbashyan1954_eng} the integral representation was obtained that expressed  the Mittag-Leffler function through the contour integral. Later it was included in his monograph~\cite{Dzhrbashyan1966_eng} (see chapter 3, \S2, Lemma~3.2.1). Using this integral representation, asymptotic formulas and the distribution of the zeros of the Mittag-Leffler function were obtained. Further, the~results of the work~\cite{Dzhrbashyan1954_eng,Dzhrbashyan1966_eng} were used in the books of~\cite{Gorenflo2014, Podlubny1999}, as~well as in the works of~\cite{Gorenflo2002b,Seybold2005,Hilfer2006,Seybold2009,Parovik2012_eng} to develop the methods and algorithms of calculating the Mittag-Leffler function.  However, despite a wide use of the integral representation for  the Mittag-Leffler function that was obtained in the works of~\cite{Dzhrbashyan1954_eng,Dzhrbashyan1966_eng}, it turned out that there was a mistake in that representation. This fact was pointed out in the work of~\cite{Saenko2020}.  In~this regard, there is an  issue of obtaining the correct integral representation for the Mittag-Leffler~function.

One of the possible solutions to this issue was given in the works of~\cite{Saenko2020,saenko2019}. In~these works, the~representation of the Mittag-Leffler function was obtained that expresses its value through the contour integral. As~it was noted earlier, this representation is used to study the asymptotic properties of the Mittag-Leffler function. However, for~practical use and for calculating the value of the function, it is convenient to have integral representations expressing the function in terms of the integrals of real variables. This paper is devoted to obtaining such integral representations for the Mittag-Leffler function.  The~starting point of the solution to this problem is the integral representation of the function $E_{\rho,\mu}(z)$ obtained in the work~\cite{Saenko2020}. Running a little ahead, we will say that the transition from integration over a complex variable to integration over real variables leads to the appearance of two forms of the integral representation of the Mittag-Leffler function. The~first form will be abbreviated as the representation ``A'', 
 the~second as the representation  ``B''. The~representation ``A'' is a direct  consequence of the integral representation obtained in the work~\cite{Saenko2020}. It is obtained as a result of the transition from the contour integral to integration over real variables. To~obtain the representation  ``B'' in addition to performing such a transition, it is necessary to carry out a terminal transition  $\varepsilon\to0$. This leads to the fact that the integral representation ``B'' is valid only for parameter values  $\mu$ satisfying the condition $\Re\mu<1+1/\rho$. As~a result, both the representation ``A'' and the representation ``B'' have its advantages and drawbacks which will be discussed in detail in the~paper.

It should be pointed out that  in this paper the letters $\rho,\mu$ are used to denote the parameters of the Mittag-Leffler function (\ref{eq:MLF_gen}).   These notations were introduced by M.M. Djrbashian in his works~\cite{Dzhrbashyan1954_eng,Dzhrbashian1954b_eng,Dzhrbashyan1966_eng}. Parameters $\rho$ and $\mu$ of the function (\ref{eq:MLF_gen}) are connected with the commonly used notations  $\alpha$ and $\beta$ of the parameters of the Mittag-Leffler function by simple relations $\alpha=1/\rho$, $\beta=\mu$.

\section{Integral representation  ``A''}

The purpose of this paper  is to obtain integral representations for the function $E_{\rho,\mu}(z)$ expressing this function in terms of integrals over real variables. It is convenient to use the representations of such a kind in practical problems as well as for calculating the values of the Mittag-Leffler function.  As~an example of the use of such integral representations, we can mention the work~\cite{Saenko2020c}. In~that paper, the~inverse Fourier transform of the characteristic function of the fractionally stable law was performed which is expressed in terms of the Mittag-Leffler function. To~perform the inverse Fourier transform, we used the integral representation of the Mittag-Leffler function which in the current paper is formulated in corollary~\ref{coroll:MLF_int1_deltaRho} item 2 (see (\ref{eq:MLF_int1_piRho})). Using this integral representation in the paper~\cite{Saenko2020c} expressions of density and distribution function of a fractionally stable law were~obtained.  Articles \cite{UCHAIKIN2008,Uchaikin2009a} are another example of usage of the Mittag-Leffler function. In these articles are shown that solutions of a master equation for the fractional Poisson process \cite{UCHAIKIN2008} and a fractional relaxation equation for dielectrics \cite{Uchaikin2009a} are expressed thorough the Mittag-Leffler function. A Monte Carlo method was used for calculation the obtained solutions. However,  the use of the integral representation of the Mittag-Leffler function for calculation of the solutions would significantly increase the accuracy of the results.

The starting point of this paper is the integral representation of the function $E_{\rho,\mu}(z)$, obtained in the work~\cite{Saenko2020}. The~following theorem was formulated in this~work

\begin{theorem}\label{lemm:MLF_int}
For any real $\rho, \delta_{1\rho}, \delta_{2\rho}, \epsilon$  satisfying the conditions $\rho>1/2$, $\frac{\pi}{2\rho}<\delta_{1\rho}\leqslant\min(\pi,\pi/\rho)$, $\frac{\pi}{2\rho}<\delta_{2\rho}\leqslant\min(\pi,\pi/\rho)$, $\epsilon>0$, any $\mu\in\C$ and any $z\in\C$  satisfying the condition
\begin{equation}\label{eq:z_cond_lemm}
  \frac{\pi}{2\rho}-\delta_{2\rho}+\pi<\arg z<-\frac{\pi}{2\rho}+\delta_{1\rho}+\pi
\end{equation}
the Mittage-Leffler function can be represented in the from
\begin{equation}\label{eq:MLF_int}
  E_{\rho,\mu}(z)=\frac{\rho}{2\pi i} \int_{\gamma_\zeta}\frac{\exp\left\{(z\zeta)^{\rho}\right\}(z\zeta)^{\rho(1-\mu)}}{\zeta-1}d\zeta.
\end{equation}
where the contour of integration $\gamma_\zeta$ has the form (see~Figure~\ref{fig:loop_gammaZeta})
\begin{equation}\label{eq:loop_gammaZeta}
  \gamma_\zeta=\left\{\begin{array}{ll}
                       S_1=&\{\zeta: \arg\zeta=-\delta_{1\rho}-\pi,\quad |\zeta|\geqslant 1+\epsilon\},\\
                       C_\epsilon=&\{\zeta: -\delta_{1\rho}-\pi\leqslant\arg\zeta\leqslant\delta_{2\rho}- \pi,\quad |\zeta|=1+\epsilon\},\\
                       S_2=&\{\zeta: \arg\zeta=\delta_{2\rho}-\pi,\quad|\zeta|\geqslant1+\epsilon\}.
                     \end{array}\right.
\end{equation}
\end{theorem}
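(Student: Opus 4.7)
The plan is to expand the singular factor $1/(\zeta-1)$ in a geometric series and reduce each resulting summand to the classical Hankel representation of the reciprocal Gamma function, thereby recovering the defining power series~(\ref{eq:MLF_gen}) term by term.

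Since every point of $\gamma_\zeta$ satisfies $|\zeta|\geqslant 1+\epsilon>1$, I would begin from
\[
\frac{1}{\zeta-1}=\sum_{k=0}^{\infty}\zeta^{-k-1},
\]
the series converging uniformly on $\gamma_\zeta$. Inserting this expansion into the right-hand side of~(\ref{eq:MLF_int}) gives formally
\[
\frac{\rho}{2\pi i}\int_{\gamma_\zeta}\frac{\exp\{(z\zeta)^{\rho}\}(z\zeta)^{\rho(1-\mu)}}{\zeta-1}\,d\zeta
=\sum_{k=0}^{\infty}\frac{\rho}{2\pi i}\int_{\gamma_\zeta}\exp\{(z\zeta)^{\rho}\}(z\zeta)^{\rho(1-\mu)}\zeta^{-k-1}\,d\zeta.
\]
The interchange of summation and integration is justified by producing an integrable dominant; on the arc $C_\epsilon$ this is routine, while on the rays $S_1,S_2$ it relies on the exponential decay of $\exp\{(z\zeta)^\rho\}$ as $|\zeta|\to\infty$, which is precisely what hypothesis~(\ref{eq:z_cond_lemm}) on $\arg z$ secures: a direct check shows that $\operatorname{Re}\{(z\zeta)^\rho\}\to-\infty$ along both rays whenever $\arg z$ lies in the stated interval.

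For each summand I would then substitute $t=(z\zeta)^{\rho}$, fixing the branch of $\zeta^{\rho}$ compatibly with the branch of $(z\zeta)^{\rho(1-\mu)}$ already present in the integrand. Using $d\zeta=\tfrac{1}{\rho z}\,t^{1/\rho-1}\,dt$ and $(z\zeta)^{\rho(1-\mu)}\zeta^{-k-1}=z^{k+1}\,t^{1-\mu-(k+1)/\rho}$, the $k$-th term collapses to
\[
\frac{z^{k}}{2\pi i}\int_{\gamma_t}e^{t}\,t^{-\mu-k/\rho}\,dt,
\]
where $\gamma_t$ denotes the image of $\gamma_\zeta$. The bounds on $\rho,\delta_{1\rho},\delta_{2\rho}$ combined with~(\ref{eq:z_cond_lemm}) force the arguments of $t$ on the two infinite rays of $\gamma_t$ to lie in $(\pi/2,3\pi/2)$ modulo $2\pi$, so $\gamma_t$ can be deformed to the standard Hankel loop around the branch cut of $t^{-\mu-k/\rho}$ without crossing it. Applying $\frac{1}{\Gamma(s)}=\frac{1}{2\pi i}\int_{Ha}e^{t}t^{-s}\,dt$ with $s=\mu+k/\rho$ yields $z^{k}/\Gamma(\mu+k/\rho)$, and summation over $k$ reproduces~(\ref{eq:MLF_gen}).

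The principal obstacle is the geometric verification that $\gamma_t$ is deformable to a Hankel loop of the classical form: one must track the image under $t=(z\zeta)^\rho$ of both the finite arc $C_\epsilon$ and the two rays, confirm that no branch cut is crossed during the deformation, and simultaneously maintain exponential decay that justifies the deformation. It is precisely this geometric requirement that fixes the endpoints of~(\ref{eq:z_cond_lemm}): as $\delta_{1\rho}$ or $\delta_{2\rho}$ is lowered toward $\pi/(2\rho)$, the admissible range for $\arg z$ shrinks to a single point, beyond which the rays of $\gamma_t$ no longer provide decay and the representation fails.
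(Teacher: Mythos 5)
Your plan is sound and is, in essence, the standard derivation of (\ref{eq:MLF_int}) run in reverse. Note that the paper itself does not prove Theorem~\ref{lemm:MLF_int}: it defers to~\cite{Saenko2020}, where (as the Conclusions section indicates) the argument starts from the series (\ref{eq:MLF_gen}), replaces each $1/\Gamma(\mu+k/\rho)$ by a Hankel-type contour integral over $\gamma_\zeta$ established in the appendix of that work, and then sums the geometric series $\sum_{k\geqslant0}\zeta^{-k-1}=1/(\zeta-1)$, convergent on $|\zeta|\geqslant1+\epsilon$. You expand $1/(\zeta-1)$ first and contract each term back to the reciprocal-gamma integral, so the two proofs use the same two ingredients (the geometric series and the Hankel formula), the same interchange of sum and integral, and the same decay analysis on $S_1,S_2$; your bookkeeping in the substitution $t=(z\zeta)^\rho$ is correct, and condition (\ref{eq:z_cond_lemm}) does exactly what you say: it places $\arg\bigl\{(z\zeta)^\rho\bigr\}$ strictly inside $(\pi/2,3\pi/2)$ modulo $2\pi$ on both rays, while the angle subtended by the image contour, $\rho(\delta_{1\rho}+\delta_{2\rho})\in(\pi,2\pi]$, guarantees it can be straightened to the standard Hankel loop without losing convergence or crossing the branch cut. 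The one step you should make explicit rather than leave as an announced ``obstacle'' is this deformation argument together with the consistent tracking of the branch of $(z\zeta)^{\rho(1-\mu)}$ through the substitution, since that is where the hypotheses on $\rho$, $\delta_{1\rho}$, $\delta_{2\rho}$ actually enter.
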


\begin{figure}[H]
  \centering
  \includegraphics[width=0.4\textwidth]{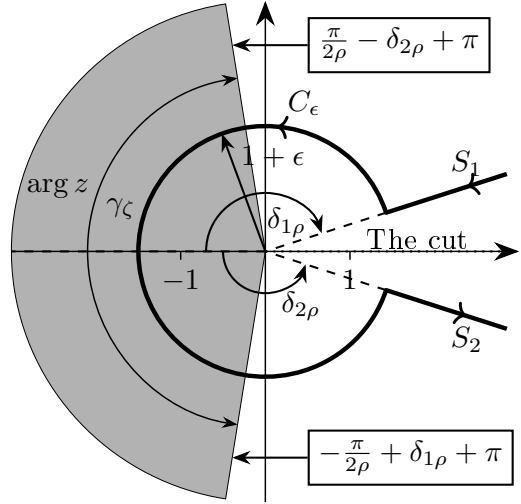}
  \caption{The contour of integration $\gamma_\zeta$. The~region that corresponds to the condition (\ref{eq:z_cond_lemm}) is shaded in~grey.}\label{fig:loop_gammaZeta}
\end{figure}

The proof of this theorem can be found in the work~\cite{Saenko2020}.

The form of the contour of integration $\gamma_\zeta$ on the complex plane  $\zeta$ is given in Figure~\ref{fig:loop_gammaZeta}. The~cut of the complex plane goes along the positive part of a real axis.  The~contour of integration consists of the half-line $S_1$, the~arc of the circle  $C_\epsilon$ radius $1+\epsilon$ and the half-line $S_2$.   The~contour $\gamma_\zeta$ is traversed in a positive direction. The~parameters $\delta_{1\rho}$ and $\delta_{2\rho}$ have the meaning of inclination angles of the half-lines $S_1$ and $S_2$ in relation to the contour axis $\gamma_\zeta$. In~Figure~\ref{fig:loop_gammaZeta} the contour axis $\gamma_\zeta$ coincides with a real axis. The~values of the angles $\delta_{1\rho}$ and $\delta_{2\rho}$  are measured from the negative part of the real axis. The~reference directions of these angles in Figure~\ref{fig:loop_gammaZeta} are shown by~arrows.

We will obtain the integral representations that of interest to us by going from integration over the complex variable $\zeta$ to integration over variables $r$ and $\varphi$ interconnected by the relation $\zeta=re^{i\varphi}$. As~a result, the~following theorem is true for the function $E_{\rho,\mu}(z)$.

\begin{theorem}[The representation ``A'']\label{lemm:MLF_int0}
For any real $\rho>1/2$, $\epsilon>0$  and $\delta_{1\rho}$, $\delta_{2\rho}$ satisfying the conditions
\begin{equation}\label{eq:deltaCond_lemm_int0}
 \frac{\pi}{2\rho}<\delta_{1\rho}\leqslant\min\left(\pi,\frac{\pi}{\rho}\right),\quad \frac{\pi}{2\rho}<\delta_{2\rho}\leqslant\min\left(\pi,\frac{\pi}{\rho}\right),
\end{equation}
for any complex $\mu$ and any complex $z$ satisfying the condition
\begin{equation}\label{eq:lem_CommonRepr_argZcond}
  \frac{\pi}{2\rho}-\delta_{2\rho}+\pi<\arg z<-\frac{\pi}{2\rho}+\delta_{1\rho}+\pi,
\end{equation}
the Mittag-Leffler function can be represented in the form
\begin{equation}\label{eq:MLF_int0}
  E_{\rho,\mu}(z)=\int_{1+\epsilon}^{\infty}K_{\rho,\mu}(r,-\delta_{1\rho},\delta_{2\rho},z)dr+
  \int_{-\delta_{1\rho}-\pi}^{\delta_{2\rho}-\pi}P_{\rho,\mu}(1+\epsilon,\varphi,z)d\varphi.
\end{equation}

Here
\begin{equation}\label{eq:K_lemm_int0}
  K_{\rho,\mu}(r,\varphi_1,\varphi_2,z) =\frac{\rho}{2\pi i} \frac{(zr)^{\rho(1-\mu)}e^{-i\rho\pi(1-\mu)}[A_{\rho,\mu}(r,\varphi_2,\varphi_1,z)-A_{\rho,\mu}(r,\varphi_1,\varphi_2,z)]}
  {(r^2+2r\cos\varphi_1+1) (r^2+2r\cos\varphi_2+1)},
\end{equation}
where
\begin{equation}\label{eq:A_lemm_int0}
  A_{\rho,\mu}(r,\omega_1,\omega_2,z) \\
  = \exp\left\{(zr)^\rho e^{-i\rho\pi}\cos(\rho\omega_1)\right\}(r^2+2r\cos\omega_2+1)e^{i\eta(r,\omega_1,z)}\left[r+e^{i\omega_1}\right],
\end{equation}
\begin{equation}\label{eq:eta_lemm_int0}
  \eta(r,\varphi,z)=(zr)^{\rho}e^{-i\rho\pi}\sin(\rho\varphi)+\rho(1-\mu)\varphi,
\end{equation}
and
\begin{equation}\label{eq:lemm_int0_P}
  P_{\rho,\mu}(r,\varphi,z)=\frac{\rho}{2\pi}\frac{\exp\left\{(z r)^\rho\cos(\rho\varphi)\right\} (z r)^{\rho(1-\mu)}e^{i\chi(r,\varphi,z)}r\left[r-e^{i\varphi}\right]} {r^2-2r\cos\varphi+1},
\end{equation}
where
\begin{equation}\label{eq:chi_lemm_int0}
  \chi(r,\varphi,z)=(zr)^\rho\sin(\rho\varphi)+\rho(1-\mu)\varphi.
\end{equation}
\end{theorem}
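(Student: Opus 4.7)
The plan is to take Theorem~\ref{lemm:MLF_int} as given and evaluate the contour integral in (\ref{eq:MLF_int}) directly, by decomposing $\gamma_\zeta=S_1\cup C_\epsilon\cup S_2$ according to (\ref{eq:loop_gammaZeta}) and parameterizing each piece by $\zeta=re^{i\varphi}$. Setting $\omega_1=-\delta_{1\rho}$ and $\omega_2=\delta_{2\rho}$, it suffices to verify that the arc contribution reproduces the second integral in (\ref{eq:MLF_int0}) and that $S_1+S_2$ reproduces the first.

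For the arc I would parameterize $\zeta=(1+\epsilon)e^{i\varphi}$ with $\varphi\in[-\delta_{1\rho}-\pi,\delta_{2\rho}-\pi]$, so $d\zeta=i(1+\epsilon)e^{i\varphi}d\varphi$. Splitting $(z\zeta)^\rho=(zr)^\rho[\cos(\rho\varphi)+i\sin(\rho\varphi)]$ separates the exponent into modulus and phase; the phase, together with the factor $e^{i\rho(1-\mu)\varphi}$ from $(z\zeta)^{\rho(1-\mu)}$, assembles into $e^{i\chi(r,\varphi,z)}$ via (\ref{eq:chi_lemm_int0}). Rationalizing $(\zeta-1)^{-1}$ by multiplying by $\overline{\zeta-1}=re^{-i\varphi}-1$ produces the denominator $r^2-2r\cos\varphi+1$, while the $d\zeta$-factor $ire^{i\varphi}$ combines with the conjugate to give the numerator factor $ir(r-e^{i\varphi})$. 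The $i$ cancels the $1/i$ in the prefactor $\rho/(2\pi i)$ and what remains is exactly $P_{\rho,\mu}(1+\epsilon,\varphi,z)$ of (\ref{eq:lemm_int0_P}).

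For the half-lines I would parameterize $\zeta=-re^{i\omega_j}$ on $S_j$, so $d\zeta=-e^{i\omega_j}dr$. On $S_2$ the positive orientation of $\gamma_\zeta$ sends $r$ from $1+\epsilon$ to $\infty$, while on $S_1$ it sends $r$ from $\infty$ to $1+\epsilon$; swapping limits on $S_1$ produces an overall minus sign relative to $S_2$. The pointwise computation is symmetric in $j$: the splittings $(z\zeta)^\rho=(zr)^\rho e^{-i\rho\pi}[\cos(\rho\omega_j)+i\sin(\rho\omega_j)]$ and $(z\zeta)^{\rho(1-\mu)}=(zr)^{\rho(1-\mu)}e^{-i\rho\pi(1-\mu)}e^{i\rho(1-\mu)\omega_j}$ produce the outer prefactor $(zr)^{\rho(1-\mu)}e^{-i\rho\pi(1-\mu)}$ of $K_{\rho,\mu}$ and, together with the imaginary part of the exponential, the phase $e^{i\eta(r,\omega_j,z)}$ of (\ref{eq:eta_lemm_int0}). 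Rationalizing $(\zeta-1)^{-1}=-(re^{i\omega_j}+1)^{-1}$ against its conjugate and absorbing $-e^{i\omega_j}$ from $d\zeta$ yields the denominator $r^2+2r\cos\omega_j+1$ and the numerator factor $r+e^{i\omega_j}$. Multiplying numerator and denominator by the ``other'' quadratic $r^2+2r\cos\omega_{3-j}+1$ places both half-line integrals over the common denominator $(r^2+2r\cos\omega_1+1)(r^2+2r\cos\omega_2+1)$ and identifies the $S_j$-integrand with $A_{\rho,\mu}(r,\omega_j,\omega_{3-j},z)$ (up to the prefactor). The orientation-induced minus on $S_1$ then yields the antisymmetric combination $A_{\rho,\mu}(r,\omega_2,\omega_1,z)-A_{\rho,\mu}(r,\omega_1,\omega_2,z)$ in (\ref{eq:K_lemm_int0}), which after inclusion of $\rho/(2\pi i)$ gives $K_{\rho,\mu}(r,-\delta_{1\rho},\delta_{2\rho},z)$.

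The main obstacle is purely bookkeeping: keeping the orientation sign on $S_1$ straight, using the branch of $(z\zeta)^\rho$ unambiguously fixed by the hypothesis (\ref{eq:lem_CommonRepr_argZcond}), and organizing the $S_j$-contributions so that the common denominator of $K_{\rho,\mu}$ emerges and the antisymmetric $A_{\rho,\mu}$-difference is exposed. No convergence issue arises beyond what Theorem~\ref{lemm:MLF_int} already provides, since (\ref{eq:lem_CommonRepr_argZcond}) coincides with (\ref{eq:z_cond_lemm}) and forces $\Re(z\zeta)^\rho\to-\infty$ on both half-lines as $|\zeta|\to\infty$, so all manipulations on $S_1\cup S_2$ are carried out inside absolutely convergent integrals.
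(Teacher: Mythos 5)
Your proposal is correct and follows essentially the same route as the paper's own proof: decompose $\gamma_\zeta$ into $S_1$, $C_\epsilon$, $S_2$, parameterize by $\zeta=re^{i\varphi}$, rationalize each denominator against its conjugate, use the branch $e^{-i\rho\pi}$ fixed by $\arg\zeta\in[-\delta_{1\rho}-\pi,\delta_{2\rho}-\pi]$ to extract the phases $\eta$ and $\chi$, and combine the two half-line integrals over the common quadratic denominator so that the orientation-reversed $S_1$ contribution produces the antisymmetric difference $A_{\rho,\mu}(r,\varphi_2,\varphi_1,z)-A_{\rho,\mu}(r,\varphi_1,\varphi_2,z)$ in $K_{\rho,\mu}$. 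The paper does exactly this bookkeeping, including the observation that no new restrictions on $\rho$, $\mu$, $\delta_{1\rho}$, $\delta_{2\rho}$, or $\arg z$ arise, so the hypotheses carry over unchanged from Theorem~\ref{lemm:MLF_int}.
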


\begin{proof}
For convenience we will introduce the notation
\begin{equation}\label{eq:phiFun}
  \phi_{\rho,\mu}(\zeta,z)=\frac{\rho}{2\pi i}\exp\left\{(\zeta z)^\rho\right\}(\zeta z)^{\rho(1-\mu)}.
\end{equation}
As a result, the~representation (\ref{eq:MLF_int}) will take the form
\begin{equation}\label{eq:MLF_tmp6}
  E_{\rho,\mu}(z)=  \int_{\gamma_\zeta}\frac{\phi_{\rho,\mu}(\zeta,z)}{\zeta-1}d\zeta,
\end{equation}
where the contour of integration $\gamma_\zeta$ is defined by (\ref{eq:loop_gammaZeta}).

Substituting in (\ref{eq:MLF_tmp6}) the variable of integration $\zeta=re^{i\varphi}$ and  directly calculating this integral we~obtain
\begin{multline}\label{eq:MLF_commonRepr}
  E_{\rho,\mu}(z)=\int_{S_1}\frac{\phi_{\rho,\mu}(\zeta,z)}{\zeta-1}d\zeta+ \int_{C_\epsilon}\frac{\phi_{\rho,\mu}(\zeta,z)}{\zeta-1}d\zeta+ \int_{S_2}\frac{\phi_{\rho,\mu}(\zeta,z)}{\zeta-1}d\zeta
  =\int_{\infty}^{1+\epsilon}\left.\frac{\phi_{\rho,\mu}(re^{i\varphi},z)}{re^{i\varphi}-1}e^{i\varphi}dr\right|_{\varphi=-\delta_{1\rho}-\pi}\\
  +i\hspace{-2mm}\int\limits_{-\delta_{1\rho}-\pi}^{\delta_{2\rho}-\pi}\hspace{-2mm}\left.\frac{\phi_{\rho,\mu}(re^{i\varphi},z)}{re^{i\varphi}-1}re^{i\varphi}d\varphi\right|_{r=1+\epsilon}
  +\hspace{-2mm}\int\limits_{1+\epsilon}^{\infty}\left.\frac{\phi_{\rho,\mu}(re^{i\varphi},z)}{re^{i\varphi}-1}e^{i\varphi}dr\right|_{\varphi=\delta_{2\rho}-\pi}
  =\int\limits_{\infty}^{1+\epsilon}\frac{\phi_{\rho,\mu}(re^{i(-\delta_{1\rho}-\pi)},z)}{re^{i(-\delta_{1\rho}-\pi)}-1}e^{i(-\delta_{1\rho}-\pi)}dr\\
  +i\hspace{-2mm}\int\limits_{-\delta_{1\rho}-\pi}^{\delta_{2\rho}-\pi}\hspace{-2mm}\frac{\phi_{\rho,\mu}((1+\epsilon)e^{i\varphi},z)}{(1+\epsilon)e^{i\varphi}-1}(1+\epsilon)e^{i\varphi}d\varphi
  +\int\limits_{1+\epsilon}^{\infty}\frac{\phi_{\rho,\mu}(re^{i(\delta_{2\rho}-\pi)},z)}{re^{i(\delta_{2\rho}-\pi)}-1}e^{i(\delta_{2\rho}-\pi)}dr
  =I_{S_1}+I_{C_\epsilon}+I_{S_2}.
\end{multline}

We consider the integral $I_{S_1}$. Getting rid of the complexity in the denominator we have
\begin{multline*}
  I_{S_1}=\int_{\infty}^{1+\epsilon}\frac{\phi_{\rho,\mu}\left(re^{i(-\delta_{1\rho}-\pi)},z\right)e^{i(-\delta_{1\rho}-\pi)}} {re^{i(-\delta_{1\rho}-\pi)}-1}dr = \int_{\infty}^{1+\epsilon} \frac{\phi_{\rho,\mu}\left(re^{i(-\delta_{1\rho}-\pi)},z\right)e^{i(-\delta_{1\rho}-\pi)} \left(re^{i(\delta_{1\rho}+\pi)}-1\right)} {\left(re^{i(-\delta_{1\rho}-\pi)}-1\right)\left(re^{i(\delta_{1\rho}+\pi)}-1\right)}dr\\
  = \int_{\infty}^{1+\epsilon}\frac{\phi_{\rho,\mu}\left(re^{i(-\delta_{1\rho}-\pi)},z\right) \left(r+e^{-i\delta_{1\rho}}\right) }{r^2+2r\cos\delta_{1\rho}+1} dr\\
  =\frac{\rho}{2\pi i}\int_{\infty}^{1+\epsilon} \frac{\exp\left\{\left(rze^{-i(\delta_{1\rho}+\pi)}\right)^\rho\right\} \left(zre^{-i(\delta_{1\rho}+\pi)}\right)^{\rho(1-\mu)} \left(r+e^{-i\delta_{1\rho}}\right)}{r^2+2r\cos\delta_{1\rho}+1}dr\\
  =\frac{\rho}{2\pi i}\int_{\infty}^{1+\epsilon}\frac{\exp\left\{(zr)^\rho e^{-i\rho\pi}(\cos(\rho\delta_{1\rho})-i\sin(\rho\delta_{1\rho}))\right\}}{r^2+2r\cos\delta_{1\rho}+1}
  (zr)^{\rho(1-\mu)}e^{-i\rho\pi(1-\mu)}e^{-i\rho(1-\mu)\delta_{1\rho}}(r+e^{-i\delta_{1\rho}})dr\\
  =\frac{\rho}{2\pi i}\int_{\infty}^{1+\epsilon}\frac{\exp\left\{(zr)^\rho e^{-i\rho\pi}\cos(\rho\delta_{1\rho})\right\}}{r^2+2r\cos\delta_{1\rho}+1}
 (zr)^{\rho(1-\mu)}e^{-i\rho\pi(1-\mu)}e^{i\eta(r,-\delta_{1\rho},z)} \left(r+e^{-i\delta_{1\rho}}\right)dr,
\end{multline*}
where the notation $ \eta(r,\varphi,z)=(zr)^\rho e^{-i\rho\pi}\sin(\rho\varphi)+\rho(1-\mu)\varphi$ was~introduced.

Similarly, for~the integral $I_{S_2}$ we get
\begin{multline*}
  I_{S_2}=
  \int_{1+\epsilon}^{\infty}\frac{\phi_{\rho,\mu}\left(re^{i(\delta_{2\rho}-\pi)},z\right)e^{i(\delta_{2\rho}-\pi)}}{re^{i(\delta_{2\rho}-\pi)}-1}dr
  = \int_{1+\epsilon}^{\infty}\frac{\phi_{\rho,\mu}\left(re^{i(\delta_{2\rho}-\pi)},z\right) \left(r+e^{i\delta_{2\rho}}\right) }{r^2+2r\cos\delta_{2\rho}+1} dr\\
  = \frac{\rho}{2\pi i}\int_{1+\epsilon}^{\infty}\frac{\exp\left\{(zr)^\rho e^{-i\rho\pi}\cos(\rho\delta_{2\rho})\right\}(zr)^{\rho(1-\mu)}e^{-i\rho\pi(1-\mu)}e^{i\eta(r,\delta_{2\rho},z)} \left(r+e^{i\delta_{2\rho}}\right)}{r^2+2r\cos\delta_{2\rho}+1}dr.
\end{multline*}
Summing now the integrals $I_{S_1}$ and $I_{S_2}$ we obtain
\begin{multline}\label{eq:Is1+Is2}
  I_{S_1}+I_{S_2}= \frac{\rho}{2\pi i}\int_{\infty}^{1+\epsilon}\frac{\exp\left\{(zr)^\rho e^{-i\rho\pi}\cos(\rho\delta_{1\rho})\right\}}{r^2+2r\cos\delta_{1\rho}+1}
   (zr)^{\rho(1-\mu)}e^{-i\rho\pi(1-\mu)}e^{i\eta(r,-\delta_{1\rho},z)} \left(r+e^{-i\delta_{1\rho}}\right)dr\\
  + \frac{\rho}{2\pi i}\int_{1+\epsilon}^{\infty}\frac{\exp\left\{(zr)^\rho e^{-i\rho\pi}\cos(\rho\delta_{2\rho})\right\}(zr)^{\rho(1-\mu)}e^{-i\rho\pi(1-\mu)}e^{i\eta(r,\delta_{2\rho},z)} \left(r+e^{i\delta_{2\rho}}\right)}{r^2+2r\cos\delta_{2\rho}+1}dr\\
  =\int_{1+\epsilon}^{\infty}K_{\rho,\mu}(r,-\delta_{1\rho},\delta_{2\rho},z)dr,
\end{multline}
where
\begin{align*}
  K_{\rho,\mu}(r,\varphi_1,\varphi_2,z)&=\frac{\rho}{2\pi i} \frac{(zr)^{\rho(1-\mu)}e^{-i\rho\pi(1-\mu)}[A_{\rho,\mu}(r,\varphi_2,\varphi_1,z)-A_{\rho,\mu}(r,\varphi_1,\varphi_2,z)]}
  {(r^2+2r\cos\varphi_1+1) (r^2+2r\cos\varphi_2+1)},\\
  A_{\rho,\mu}(r,\omega_1,\omega_2,z) &= \exp\left\{(zr)^\rho e^{-i\rho\pi}\cos(\rho\omega_1)\right\}(r^2+2r\cos\omega_2+1)e^{i\eta(r,\omega_1,z)}\left[r+e^{i\omega_1}\right].
\end{align*}

We consider now the integral $I_{C_\epsilon}$.   At~the beginning, we will get rid of the complexity in the denominator. To~do this, we will multiply and divide the integrand by the complex conjugate of the denominator and open the brackets in the denominator. Then, in~the resulting expression, we substitute the definition of the function  $\phi_{\rho,\mu}(\zeta,z)$ (see (\ref{eq:phiFun})) and in the indices of the exponents we use the Euler formula  $e^{i\varphi}=\cos\varphi+i\sin\varphi$. As~a result, we get
\begin{multline}\label{eq:Ic_eps}
  I_{C_\epsilon}= i\hspace{-3mm}\int\limits_{-\delta_{1\rho}-\pi}^{\delta_{2\rho}-\pi}\frac{\phi_{\rho,\mu}((1+\epsilon)e^{i\varphi},z)} {(1+\epsilon)e^{i\varphi}-1}(1+\epsilon)e^{i\varphi}d\varphi
  = i\hspace{-3mm}\int\limits_{-\delta_{1\rho}-\pi}^{\delta_{2\rho}-\pi}\frac{\phi_{\rho,\mu}((1+\epsilon)e^{i\varphi},z)(1+\epsilon)e^{i\varphi} \left((1+\epsilon)e^{-i\varphi}-1\right) } {\left((1+\epsilon)e^{i\varphi}-1\right) \left((1+\epsilon)e^{-i\varphi}-1\right)}d\varphi\\
  = i\int_{-\delta_{1\rho}-\pi}^{\delta_{2\rho}-\pi}\frac{\phi_{\rho,\mu}((1+\epsilon)e^{i\varphi},z)(1+\epsilon) \left((1+\epsilon)-e^{i\varphi}\right) } {(1+\epsilon)^2-2(1+\epsilon)\cos\varphi+1}d\varphi\\
  =\frac{\rho}{2\pi}\int_{-\delta_{1\rho}-\pi}^{\delta_{2\rho}-\pi} \frac{\exp\left\{\left(z(1+\epsilon)e^{i\varphi}\right)^\rho\right\} \left(z(1+\epsilon)e^{i\varphi}\right)^{\rho(1-\mu)} (1+\epsilon)\left((1+\epsilon)-e^{i\varphi}\right)}{(1+\epsilon)^2-2(1+\epsilon)\cos\varphi+1}d\varphi\\
  =\frac{\rho}{2\pi}\int\limits_{-\delta_{1\rho}-\pi}^{\delta_{2\rho}-\pi} \frac{\exp\left\{((1+\epsilon)z)^\rho(\cos(\rho\varphi) + i\sin(\rho\varphi))\right\}}{(1+\epsilon)^2-2(1+\epsilon)\cos\varphi+1}
  (z(1+\epsilon))^{\rho(1-\mu)} e^{i\rho\varphi(1-\mu)} (1+\epsilon)\left((1+\epsilon)-e^{i\varphi}\right)d\varphi\\
  =\frac{\rho}{2\pi}\int\limits_{-\delta_{1\rho}-\pi}^{\delta_{2\rho}-\pi}
  \frac{\exp\left\{((1+\epsilon)z)^\rho\cos(\rho\varphi)\right\}}{(1+\epsilon)^2-2(1+\epsilon)\cos\varphi+1}
   (z(1+\epsilon))^{\rho(1-\mu)} e^{i\chi((1+\epsilon),\varphi,z)}(1+\epsilon)\left((1+\epsilon)-e^{i\varphi}\right) d\varphi\\
  =\int_{-\delta_{1\rho}-\pi}^{\delta_{2\rho}-\pi} P_{\rho,\mu}(1+\epsilon,\varphi,z) d\varphi,
\end{multline}
where
\begin{align*}
  P_{\rho,\mu}(r,\varphi,z)&=\frac{\rho}{2\pi}\frac{\exp\left\{(zr)^\rho\cos(\rho\varphi)\right\} (zr)^{\rho(1-\mu)}e^{i\chi(r,\varphi,z)}r\left[r-e^{i\varphi}\right]} {r^2-2r\cos\varphi+1},\\
  \chi(r,\varphi,z)&=(zr)^\rho\sin(\rho\varphi)+\rho(1-\mu)\varphi.
\end{align*}

Using (\ref{eq:Is1+Is2}) and (\ref{eq:Ic_eps}) in (\ref{eq:MLF_commonRepr}) we  obtain the representation (\ref{eq:MLF_int0}). It is important to pay attention that in the process of proving no additional limitations on the values of parameters $\rho$, $\mu$, $\delta_{1\rho}$, $\delta_{2\rho}$ and the argument $z$ were imposed and it means that the  ranges of admissible values pass   from Theorem~\ref{lemm:MLF_int} without change. Thus, the~representation (\ref{eq:MLF_int0}) is valid for any real $\rho>1/2$, any real $\delta_{1\rho}$, $\delta_{2\rho}$ satisfying the conditions (\ref{eq:deltaCond_lemm_int0}), any complex $\mu$ and any complex $z$ satisfying the condition (\ref{eq:lem_CommonRepr_argZcond}).
\end{proof}

The proved theorem formulates an integral representation for the Mittag-Leffler function that expresses this function in terms of the sum of improper and definite integrals. To~be definite, we will call this integral representation of the Mittag-Leffler function the representation ``A''.  As~we can see, the~representation ``A'' is a direct consequence of the representation (\ref{eq:MLF_int}). It is obtained by passing from the contour integral to integrals over the real variable. Moreover, the~improper integral in (\ref{eq:MLF_int0}) corresponds to the sum of integrals along the half-lines $S_1$ and $S_2$ of the contour $\gamma_\zeta$ and the definite integral corresponds to the integral along the arc of a circle $C_\epsilon$. It should be noted that this integral is taken along the arc of a circle of radius $1+\epsilon$, where $\epsilon>0$.  The~representation (\ref{eq:MLF_int0}) is valid for arbitrary values $\rho>1/2$, any $\mu$ and any $\delta_{1\rho}$ and $\delta_{2\rho}$ that satisfy the condition (\ref{eq:deltaCond_lemm_int0}).

However, in~general case, for~arbitrary values of parameters $\delta_{1\rho}$ and $\delta_{2\rho}$, satisfying the condition~(\ref{eq:deltaCond_lemm_int0}),  the~kernel function $K_{\rho,\mu}(r,-\delta_{1\rho},\delta_{2\rho},z)$ turns to be lengthy. The~representation (\ref{eq:MLF_int0}) takes the more compact form in case when the half-lines $S_1$ and $S_2$ of the contour of integration $\gamma_\zeta$ run symmetrically relative to the real axis i.e.,~when $\delta_{1\rho}=\delta_{2\rho}=\delta_\rho$. We will formulate the obtained result in the form of a~corollary.

\begin{corollary}\label{coroll:MLF_int0_deltaRho}
For any real $\epsilon>0$, any complex $\mu$ the following integral representations of the Mittag-Leffler function are~true:
\begin{enumerate}
\item at any real $\rho>1/2$, any real $\delta_\rho$  satisfying the condition $\frac{\pi}{2\rho}<\delta_\rho\leqslant\min(\pi,\frac{\pi}{\rho})$ and any complex $z$ satisfying the condition
$\frac{\pi}{2\rho}-\delta_\rho+\pi<\arg z<-\frac{\pi}{2\rho}+\delta_\rho+\pi$
\begin{equation}\label{eq:MLF_int0_deltaRho}
  E_{\rho,\mu}(z)=\int_{1+\epsilon}^{\infty}K_{\rho,\mu}(r,\delta_\rho,z)dr+ \int_{-\delta_\rho-\pi}^{\delta_\rho-\pi}P_{\rho,\mu}(1+\epsilon,\varphi,z)d\varphi,
\end{equation}
where
\begin{equation}\label{eq:K_coroll_int0}
  K_{\rho,\mu}(r,\varphi,z)=\frac{\rho}{\pi}(zre^{-i\pi})^{\rho(1-\mu)}\exp\left\{(zre^{-i\pi})^\rho \cos(\rho\varphi)\right\}
  \frac{r\sin(\eta(r,\varphi,z))+ \sin(\eta(r,\varphi,z)+\varphi)}{r^2+2r\cos\varphi+1},
\end{equation}
$\eta(r,\varphi,z)$ is defined by (\ref{eq:eta_lemm_int0}) and $P_{\rho,\mu}(r,\varphi,z)$ has the form (\ref{eq:lemm_int0_P}).

\item at any real $\rho\geqslant1$ at $\delta_\rho=\pi/\rho$ and any complex $z$ satisfying the condition
$-\frac{\pi}{2\rho}+\pi<\arg z<\frac{\pi}{2\rho}+\pi$
\begin{equation}\label{eq:MLF_int0_piRho}
  E_{\rho,\mu}(z)=\int_{1+\epsilon}^{\infty}K_{\rho,\mu}(r,z)dr+ \int_{-\tfrac{\pi}{\rho}-\pi}^{\tfrac{\pi}{\rho}-\pi}P_{\rho,\mu}(1+\epsilon,\varphi,z)d\varphi,
\end{equation}
where
\begin{equation}\label{eq:K_piRho_coroll_int0}
  K_{\rho,\mu}(r,z)=\frac{\rho}{\pi}(zre^{-i\pi})^{\rho(1-\mu)} \exp\left\{-(zre^{-i\pi})^\rho \right\}
  \frac{r\sin(\pi(1-\mu))+\sin(\pi(1-\mu)+\pi/\rho)}{r^2+2r\cos(\pi/\rho)+1}
\end{equation}
and $P_{\rho,\mu}(r,\varphi,z)$ is defined by (\ref{eq:lemm_int0_P}).
\end{enumerate}
\end{corollary}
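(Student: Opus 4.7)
The plan is to specialize Theorem~\ref{lemm:MLF_int0} to the symmetric case $\delta_{1\rho}=\delta_{2\rho}=\delta_\rho$ and then simplify the kernel. The definite integral term in (\ref{eq:MLF_int0}) already matches the form appearing in (\ref{eq:MLF_int0_deltaRho}): with equal angles the limits become $-\delta_\rho-\pi$ and $\delta_\rho-\pi$, and the integrand $P_{\rho,\mu}(1+\epsilon,\varphi,z)$ needs no modification. Thus all the work concentrates on collapsing $K_{\rho,\mu}(r,-\delta_\rho,\delta_\rho,z)$ into the compact expression (\ref{eq:K_coroll_int0}).

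For that simplification I would first observe that $\cos(-\delta_\rho)=\cos\delta_\rho$, so the two denominator factors $r^2+2r\cos\varphi_1+1$ and $r^2+2r\cos\varphi_2+1$ in (\ref{eq:K_lemm_int0}) coincide, and one copy cancels against the factor $(r^2+2r\cos\omega_2+1)$ that sits inside $A_{\rho,\mu}$. Next, $\cos(\rho\delta_\rho)=\cos(-\rho\delta_\rho)$ pulls a common exponential $\exp\{(zr)^\rho e^{-i\rho\pi}\cos(\rho\delta_\rho)\}$ out of both terms of $A_{\rho,\mu}(r,\delta_\rho,-\delta_\rho,z)-A_{\rho,\mu}(r,-\delta_\rho,\delta_\rho,z)$, and the definition (\ref{eq:eta_lemm_int0}) together with $\sin(-\rho\delta_\rho)=-\sin(\rho\delta_\rho)$ gives $\eta(r,-\delta_\rho,z)=-\eta(r,\delta_\rho,z)$. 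The remaining bracket therefore reduces by Euler's formula to
\[
e^{i\eta(r,\delta_\rho,z)}\bigl(r+e^{i\delta_\rho}\bigr) - e^{-i\eta(r,\delta_\rho,z)}\bigl(r+e^{-i\delta_\rho}\bigr) = 2i\bigl[r\sin\eta(r,\delta_\rho,z)+\sin(\eta(r,\delta_\rho,z)+\delta_\rho)\bigr].
\]
Combining this with the prefactor $\rho/(2\pi i)$ converts $1/(2\pi i)\cdot 2i$ into $1/\pi$, and rewriting $(zr)^{\rho(1-\mu)}e^{-i\rho\pi(1-\mu)}=(zre^{-i\pi})^{\rho(1-\mu)}$ and $(zr)^\rho e^{-i\rho\pi}=(zre^{-i\pi})^\rho$ yields exactly (\ref{eq:K_coroll_int0}), proving item~1.

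For item~2 I would set $\delta_\rho=\pi/\rho$, which is admissible only once $\rho\geqslant 1$ because then $\min(\pi,\pi/\rho)=\pi/\rho$. Substitution gives $\cos(\rho\delta_\rho)=\cos\pi=-1$ and $\sin(\rho\delta_\rho)=0$, hence $\eta(r,\pi/\rho,z)=\pi(1-\mu)$ is independent of $r$; the exponential becomes $\exp\{-(zre^{-i\pi})^\rho\}$, the denominator becomes $r^2+2r\cos(\pi/\rho)+1$, and the sine factor becomes $r\sin(\pi(1-\mu))+\sin(\pi(1-\mu)+\pi/\rho)$, matching (\ref{eq:K_piRho_coroll_int0}). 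The admissible range (\ref{eq:lem_CommonRepr_argZcond}) collapses simultaneously to $-\pi/(2\rho)+\pi<\arg z<\pi/(2\rho)+\pi$ as stated.

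I expect no genuine obstacle: the argument is algebraic bookkeeping on top of Theorem~\ref{lemm:MLF_int0}. The one point deserving care is branch consistency when rewriting $(zr)^{\rho(1-\mu)}e^{-i\rho\pi(1-\mu)}$ as $(zre^{-i\pi})^{\rho(1-\mu)}$, which must be done in the same way as in Theorem~\ref{lemm:MLF_int}, and checking that the parity identities combine so that the subtraction $A_{\rho,\mu}(\cdot,\delta_\rho,-\delta_\rho,\cdot)-A_{\rho,\mu}(\cdot,-\delta_\rho,\delta_\rho,\cdot)$ produces a sine rather than a cosine structure; this is where the specific ordering of the arguments $\omega_1,\omega_2$ of $A_{\rho,\mu}$ in the definition (\ref{eq:K_lemm_int0}) of $K_{\rho,\mu}$ is essential.
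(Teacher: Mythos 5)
Your proposal is correct and follows essentially the same route as the paper: specialize Theorem~\ref{lemm:MLF_int0} to $\delta_{1\rho}=\delta_{2\rho}=\delta_\rho$, exploit the oddness $\eta(r,-\delta_\rho,z)=-\eta(r,\delta_\rho,z)$ to reduce $A_{\rho,\mu}(r,\delta_\rho,-\delta_\rho,z)-A_{\rho,\mu}(r,-\delta_\rho,\delta_\rho,z)$ to $2i$ times a sine combination, cancel one factor of $r^2+2r\cos\delta_\rho+1$, and then set $\delta_\rho=\pi/\rho$ with $\eta=\pi(1-\mu)$ for item~2. The algebraic details (the $2i/(2\pi i)=1/\pi$ bookkeeping, the branch rewriting of the power factors, and the restriction $\rho\geqslant1$) all match the paper's argument.
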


\begin{proof}
1) According to theorem~\ref{lemm:MLF_int0}, the~Mittag-Leffler function can be represented in the form (\ref{eq:MLF_int0}). This~representation is true for arbitrary $\delta_{1\rho}$ and $\delta_{2\rho}$ satisfying the conditions (\ref{eq:deltaCond_lemm_int0}). In~case if  $\delta_{1\rho}=\delta_{2\rho}=\delta_\rho$ the conditions (\ref{eq:deltaCond_lemm_int0}) take the form
\begin{equation}\label{eq:corol_Int0_deltaRho_case1}
  \frac{\pi}{2\rho}<\delta_\rho\leqslant\min\left(\pi,\frac{\pi}{\rho}\right)
\end{equation}
and the condition (\ref{eq:lem_CommonRepr_argZcond}) can be written in the form
\begin{equation}\label{eq:corol_Int0_argZ_case1}
  \frac{\pi}{2\rho}-\delta_\rho+\pi<\arg z<-\frac{\pi}{2\rho}+\delta_\rho+\pi.
\end{equation}
As a result, the~Mittag-Leffler function is written in the form
\begin{equation*}
  E_{\rho,\mu}(z)=\int_{1+\epsilon}^{\infty}K_{\rho,\mu}(r,-\delta_\rho,\delta_\rho,z)dr+ \int_{-\delta_\rho-\pi}^{\delta_\rho-\pi}P_{\rho,\mu}(1+\epsilon,\varphi,z)d\varphi.
\end{equation*}

We consider the integrand of the first integral and denote
\begin{equation}\label{eq:K_deltaRho_z}
K_{\rho,\mu}(r,\delta_\rho,z)\equiv K_{\rho,\mu}(r,-\delta_\rho,\delta_\rho,z).
\end{equation}
Using the definition $K_{\rho,\mu}(r,\varphi_1,\varphi_2,z)$ (see~(\ref{eq:K_lemm_int0})) we get
\begin{multline}\label{eq:corol_Int0_K_tmp0}
  K_{\rho,\mu}(r,\delta_\rho,z)= K_{\rho,\mu}(r,-\delta_\rho,\delta_\rho,z)\\
  = \frac{\rho}{2\pi i} \frac{(zr)^{\rho(1-\mu)}e^{-i\rho\pi(1-\mu)}\left(A_{\rho,\mu}(r,\delta_\rho,-\delta_\rho,z)- A_{\rho,\mu}(r,-\delta_\rho,\delta_\rho,z)\right)} {(r^2+2r\cos\delta_\rho+1)^2}.
\end{multline}
Using the definition $A_{\rho,\mu}(r,\omega_1,\omega_2,z)$ (see~(\ref{eq:A_lemm_int0})) and the fact that the function $\eta(r,\varphi,z)$ defined by~(\ref{eq:eta_lemm_int0}) is  an odd function according to the variable $\varphi$
\begin{equation*}
  \eta(r,-\varphi,z) =-\eta(r,\varphi,z),
  \end{equation*}
we have
\begin{multline*}
  A_{\rho,\mu}(r,\delta_\rho,-\delta_\rho,z)-A_{\rho,\mu}(r,-\delta_\rho,\delta_\rho,z)\\
  = \exp\left\{(zr)^\rho e^{-i\pi\rho}\cos(\rho\delta_\rho)\right\} (r^2+2r\cos(-\delta_\rho)+1)e^{i\eta(r,\delta_\rho,z)}(r+e^{i\delta_\rho}) \\
  -\exp\left\{(zr)^\rho e^{-i\pi\rho}\cos(-\rho\delta_\rho)\right\} (r^2+2r\cos\delta_\rho+1)e^{i\eta(r,-\delta_\rho,z)}(r+e^{-i\delta_\rho})\\
  =\exp\left\{(zr)^\rho e^{-i\pi\rho}\cos(\rho\delta_\rho)\right\} (r^2+2r\cos\delta_\rho+1)
  \left(re^{i\eta(r,\delta_\rho,z)}+e^{i(\eta(r,\delta_\rho,z)+\delta_\rho)}- re^{-i\eta(r,\delta_\rho,z)}-e^{-i(\eta(r,\delta_\rho,z)+\delta_\rho)}\right)\\
  =2i\exp\left\{(zr)^\rho e^{-i\pi\rho}\cos(\rho\delta_\rho)\right\} (r^2+2r\cos\delta_\rho+1) \left(r\sin(\eta(r,\delta_\rho,z))+\sin(\eta(r,\delta_\rho,z)+\delta_\rho)\right).
\end{multline*}
Now substituting this result in (\ref{eq:corol_Int0_K_tmp0}) we obtain (\ref{eq:K_coroll_int0}). Since in the proof process no additional restrictions on the values of parameters $\rho$, $\mu$ and on the value $\arg z$ were imposed, then the conditions for these parameters go from  theorem~\ref{lemm:MLF_int0} without change. Thus, we come to the conditions of the~corollary.

2) We consider the case $\delta_{1\rho}=\delta_{2\rho}=\pi/\rho$. As~we can see, the~case considered is a particular case of the previous one.  It follows from (\ref{eq:deltaCond_lemm_int0}) that this case can be implemented if $\rho\geqslant1$. For~the range of values $\arg z$ from (\ref{eq:corol_Int0_argZ_case1}) we get
\begin{equation*}
  -\frac{\pi}{2\rho}+\pi<\arg z<\frac{\pi}{2\rho}+\pi.
\end{equation*}

Now we consider the representation (\ref{eq:MLF_int0_deltaRho}). In~the case under consideration it will be written in the~form
\begin{equation*}
  E_{\rho,\mu}(z)=\int_{1+\epsilon}^{\infty}K_{\rho,\mu}\left(r,\pi/\rho,z\right)dr+ \int_{-\tfrac{\pi}{\rho}-\pi}^{\tfrac{\pi}{\rho}-\pi}P_{\rho,\mu}(1+\epsilon,\varphi,z)d\varphi.
\end{equation*}
We denote
\begin{equation}\label{eq:K_piRho_z}
K_{\rho,\mu}(r,z)\equiv K_{\rho,\mu}\left(r,\tfrac{\pi}{\rho},z\right).
\end{equation}
From (\ref{eq:eta_lemm_int0}) it follows that $\eta\left(r,\tfrac{\pi}{\rho},z\right)=(1-\mu)\pi$. Now using this result in (\ref{eq:K_coroll_int0}) we get (\ref{eq:K_piRho_coroll_int0}).
\end{proof}

From the proved corollary it follows that if the parameter values $\delta_{1\rho}$ and $\delta_{2\rho}$ coincide, then in this the kernel function $K_{\rho,\mu}(r,\varphi_1,\varphi_2,z)$ is significantly simplified. Recall that the parameters $\delta_{1\rho}$ and $\delta_{2\rho}$ are inclination angles of half-lines $S_1$ and $S_2$ in the contour $\gamma_\zeta$ relative to the axis of this contour (see~Figure~\ref{fig:loop_gammaZeta}). Since in the theorem~\ref{lemm:MLF_int} the axis of the contour  $\gamma_\zeta$ coincides with the real axis, then the selection of $\delta_{1\rho}=\delta_{2\rho}$ means that half-lines $S_1$ and $S_2$ run symmetrically in relation to the real axis. The~kernel function $K_{\rho,\mu}(r,\varphi_1,\varphi_2,z)$ takes the simplest form  in the case $\delta_{1\rho}=\delta_{2\rho}=\pi/\rho$.

Further, it is necessary for us to know the position of singular points of the integrand of the representation (\ref{eq:MLF_int}). This issue was studied in the work~\cite{Saenko2020a}. For~completeness of the statement here we give the result obtained in the work~\cite{Saenko2020a} and formulate it in the form of a~lemma.

\begin{lemma}\label{lemm:MLF_SingPoints}
For any real $\rho>1/2$ and any complex values of the parameter $\mu=\mu_R+i\mu_I$ the integrand of the representation (\ref{eq:MLF_int})
\begin{equation}\label{eq:Phi_fun}
  \Phi_{\rho,\mu}(\zeta,z)=\frac{\rho}{2\pi i}\frac{\exp\{(\zeta z)^\rho\}(\zeta z)^{\rho(1-\mu)}}{\zeta-1}.
\end{equation}
relative to the variable $\zeta$ has two singular points $\zeta=1$ and $\zeta=0$. The~point $\zeta=1$ is a pole of the first order. The~point  $\zeta=0$ is:
\begin{enumerate}
\item the regular point of the function $\Phi_{\rho,\mu}(\zeta,z)$, with~the values of parameters $\rho=n$, where $n=1,2,3,\dots$ (the positive integer), $\mu_I=0$ and $\mu_R=1-m_1/\rho$, where $m_1=0,1,2,3,\dots$ (the non-negative integer);
\item a pole of the order $m_2$, if~$\rho=n$, where $n=1,2,3,\dots$ (the positive integer), $\mu_I=0$ and~$\mu_R=1+m_2/\rho$, where $m_2=1,2,3,\dots$ (the positive integer);
\item the branch point, for~any other values of parameters $\rho, \mu_I, \mu_R$.
\end{enumerate}
\end{lemma}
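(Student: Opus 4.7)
The claim falls naturally into two parts, one for each candidate singularity, and the proof is essentially a monodromy analysis of the two multivalued factors appearing in (\ref{eq:Phi_fun}).

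First I would dispose of the point $\zeta=1$. Fix a branch of the logarithm so that $(\zeta z)^\rho$ and $(\zeta z)^{\rho(1-\mu)}$ are holomorphic on a neighbourhood of $\zeta=1$ (this is possible since $\zeta z\ne 0$ there). Then the numerator of $\Phi_{\rho,\mu}(\zeta,z)$ is holomorphic and equals $\tfrac{\rho}{2\pi i}\exp\{z^\rho\}z^{\rho(1-\mu)}\ne 0$ at $\zeta=1$, while the denominator has a simple zero. Hence $\zeta=1$ is a first order pole.

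For $\zeta=0$ the denominator equals $-1$ and plays no role, so the nature of the singularity is decided entirely by the multivalued factors $(\zeta z)^{\rho}$ and $(\zeta z)^{\rho(1-\mu)}$. I would write $(\zeta z)^{\alpha}=\exp\{\alpha\log(\zeta z)\}$ and compute the monodromy around $\zeta=0$: a loop $\arg\zeta\mapsto\arg\zeta+2\pi$ multiplies $(\zeta z)^{\alpha}$ by $e^{2\pi i\alpha}$. Applied to the exponent $\rho$, this forces $\rho\in\mathbb{Z}$ for $(\zeta z)^\rho$, and hence $\exp\{(\zeta z)^\rho\}$, to be single-valued at $\zeta=0$; the condition $\rho>1/2$ then selects $\rho=n$ with $n\in\mathbb{N}$. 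Applied to the exponent $\alpha=\rho(1-\mu)=\rho(1-\mu_R)-i\rho\mu_I$, the monodromy factor $e^{2\pi i\rho(1-\mu_R)}e^{2\pi\rho\mu_I}$ equals $1$ iff its modulus and argument are trivial, i.e.\ $\mu_I=0$ and $\rho(1-\mu_R)\in\mathbb{Z}$. If either of these two conditions fails, then at least one of $(\zeta z)^\rho$ or $(\zeta z)^{\rho(1-\mu)}$ acquires a non-trivial monodromy and $\zeta=0$ is a branch point of $\Phi_{\rho,\mu}(\zeta,z)$; this yields item 3.

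It remains to separate items 1 and 2, which I would do by comparing $\rho(1-\mu_R)$ with $0$. When $\rho=n$, $\mu_I=0$ and $\rho(1-\mu_R)=m_1\geqslant 0$, the factor $(\zeta z)^{\rho(1-\mu)}=\zeta^{m_1}z^{m_1}$ is holomorphic at $\zeta=0$, $\exp\{(\zeta z)^n\}$ is entire with value $1$ there, and $1/(\zeta-1)$ is holomorphic with value $-1$; the product is holomorphic, so $\zeta=0$ is a regular point (item 1). When instead $\rho(1-\mu_R)=-m_2<0$, the same factorisation gives $(\zeta z)^{\rho(1-\mu)}=\zeta^{-m_2}z^{-m_2}$, the remaining factors are holomorphic and nonzero at $\zeta=0$, and we obtain a pole of order exactly $m_2$ (item 2).

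The only delicate step is the single-valuedness criterion for the product: one has to check that the monodromies of $(\zeta z)^\rho$ (inside the exponential) and of $(\zeta z)^{\rho(1-\mu)}$ cannot cancel each other, so that each must vanish separately. This is clear because the two functions are functionally independent near $\zeta=0$: the exponential factor $\exp\{(\zeta z)^\rho\}$ tends to $1$ along every branch since $(\zeta z)^\rho\to 0$, while the power factor $(\zeta z)^{\rho(1-\mu)}$ tends to $0$, $\infty$ or oscillates depending on $\rho(1-\mu)$; hence multi-valuedness of either factor cannot be masked by the other.
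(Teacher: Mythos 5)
The paper does not actually prove Lemma~\ref{lemm:MLF_SingPoints}: it states the result and refers the reader to~\cite{Saenko2020a}, so there is no in-text proof to compare yours against. Taken on its own, your monodromy analysis is correct and is the natural route: $\zeta=1$ is a simple pole because the numerator is holomorphic and nonzero there while the denominator has a simple zero, and the trichotomy at $\zeta=0$ reduces to single-valuedness of $\exp\{(\zeta z)^\rho\}(\zeta z)^{\rho(1-\mu)}$ together with the sign of $\rho(1-\mu_R)$. The one step you should tighten is the claim that the two monodromies cannot cancel. Rather than appealing to ``functional independence'', note that continuation around $\zeta=0$ multiplies the numerator by the factor $\exp\{(\zeta z)^\rho(e^{2\pi i\rho}-1)\}\,e^{2\pi i\rho(1-\mu)}$; single-valuedness forces this to equal $1$ identically near $\zeta=0$, and letting $\zeta\to0$ (where the exponential factor tends to $1$ because $\rho>0$) isolates the constant condition $e^{2\pi i\rho(1-\mu)}=1$, after which the exponential factor must itself be identically $1$, forcing $e^{2\pi i\rho}=1$ and hence $\rho=n$ a positive integer. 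Your limit-based heuristic does not by itself distinguish branches when all branches of $(\zeta z)^{\rho(1-\mu)}$ tend to the same limit ($0$ or $\infty$), so this reformulation closes that small gap. You might also state explicitly that $z\neq 0$ (which is guaranteed by the condition (\ref{eq:z_cond_lemm}) on $\arg z$) is needed both for the numerator to be nonzero at $\zeta=1$ and for the substitution $\log(\zeta z)=\log\zeta+\log z$ underlying the monodromy computation.
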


The proof of this lemma can be found in the work~\cite{Saenko2020a}.

We will make the following remark to Corollary~\ref{coroll:MLF_int0_deltaRho}.
\begin{remark}
In corollary~\ref{coroll:MLF_int0_deltaRho} the special case under consideration $\delta_{1\rho}=\delta_{2\rho}=\pi/\rho$. We will assume that $\rho=1$ and study the behavior of the Formula (\ref{eq:MLF_int0_piRho}) in this case. As~a result, we obtain
\begin{equation}\label{eq:MLF_int0_piRho_rho=1}
  E_{1,\mu}(z)=\int_{1+\epsilon}^{\infty}K_{1,\mu}(r,z)dr+ \int_{-2\pi}^{0}P_{1,\mu}(1+\epsilon,\varphi,z)d\varphi.
\end{equation}
Using (\ref{eq:K_piRho_coroll_int0}) for $K_{1,\mu}(z)$ we obtain
\begin{multline*}
  K_{1,\mu}(r,z)=\frac{1}{\pi}(zre^{-i\pi})^{1-\mu} \exp\left\{-zre^{-i\pi} \right\}  \frac{r\sin(\pi(1-\mu))+\sin(\pi(1-\mu)+\pi)}{r^2-2r+1}=\\
  \frac{1}{\pi}(-z)^{1-\mu}e^{zr}\frac{\sin(\pi(1-\mu))(r-1)}{(r-1)^2}=
  \frac{1}{\pi}(-z)^{1-\mu}e^{zr}\frac{\sin(\pi(1-\mu))}{r-1}.
\end{multline*}
From here it is clear, if~$\mu=n$, where $n=0,\pm1,\pm2,\pm3,\dots$, then $\sin(\pi(1-\mu))=0$. Consequently,
\begin{equation}\label{eq:K(1,mu)}
  K_{1,\rho}(z)=\left\{\begin{array}{cc}
                  0, & \mu=n, \\
                  \frac{1}{\pi}(-z)^{1-\mu}e^{zr}\frac{\sin(\pi(1-\mu))}{r-1}, & \mu\neq n.
                \end{array}\right.
\end{equation}
Thus, with~integer values of $\mu$ the first summand in (\ref{eq:MLF_int0_piRho_rho=1}) becomes zero and to calculate the value of $E_{1,n}(z)$ it remains to calculate the second integral. To~calculate this integral, numerical methods can be used. However, in~this case, this integral can be calculated analytically using the residue~theory.

In fact, we return to the integral representation formulated in theorem~\ref{lemm:MLF_int}. Recall that we consider the case $\rho=1$. Using the notation (\ref{eq:Phi_fun}), the representation  (\ref{eq:MLF_int}) takes the form
\begin{equation}\label{eq:MLF_int_rho=1}
  E_{1,\mu}(z)=\int_{\gamma_\zeta}\Phi_{1,\mu}(\zeta,z)d\zeta,
\end{equation}
where the contour of integration $\gamma_\zeta$, defined by (\ref{eq:loop_gammaZeta}), is written in the form
\begin{equation}\label{eq:loop_gammaZeta_rho=1}
  \gamma_\zeta=\left\{\begin{array}{ll}
                       S_1=&\{\zeta: \arg\zeta=-2\pi,\quad |\zeta|\geqslant 1+\epsilon\},\\
                       C_\epsilon=&\{\zeta: -2\pi\leqslant\arg\zeta\leqslant0,\quad |\zeta|=1+\epsilon\},\\
                       S_2=&\{\zeta: \arg\zeta=0,\quad|\zeta|\geqslant1+\epsilon\}.
                     \end{array}\right.
\end{equation}

We represent the complex parameter $\mu$ in the form $\mu=\mu_R+i\mu_I$ and make use of  lemma~\ref{lemm:MLF_SingPoints}. According~to this lemma, the~function $\Phi_{1,\mu}(\zeta,z)$ at values  $\mu_I=0$  and $\mu_R=1-m_1$, where $m_1=0,1,2,3,\dots$ has one singular point $\zeta=1$ which is a pole of the first order. The~point $\zeta=0$, in~this case, is the regular point. In~case, if~$\mu_I=0$ and $\mu_R=1+m_2$, where $m_2=1,2,3,\dots$  the function $\Phi_{1,\mu}(\zeta,z)$  has two singular points: the point $\zeta=1$ is a pole of the first order and the point $\zeta=0$ is a pole of the order $m_2$. As~we can see, in~both cases the point $\zeta=0$ is not a branch point. As~a result, in~these two cases, the~function $\Phi_{1,\mu}(\zeta,z)$ is the entire function of a complex variable $\zeta$. From~here it follows that when $\mu_I=0$, and~$\mu_R=n$, where $n=0,\pm1,\pm2,\pm3,\dots$ the arc of the circle $C_\epsilon$ that enters the contour (\ref{eq:loop_gammaZeta_rho=1}) is the closed circle of radius $1+\epsilon$. The~half-lines $S_1$ and $S_2$ pass along the positive part of a real axis in mutually opposite directions. With~all other values of the parameter $\mu$ (when $\mu_I\neq0$ or $\mu_R\neq n$), according to lemma~\ref{lemm:MLF_SingPoints}, the~point $\zeta=0$ is a branch point of the function $\Phi_{1,\mu}(\zeta,z)$. In~this case, the~circle $C_\epsilon$ of the contour (\ref{eq:loop_gammaZeta_rho=1}) will not close up and half-lines $S_1$ and $S_2$ will go along the upper and lower banks of the cut of the complex plane which runs along the positive part of a real~axis.

It is clear from here that the result (\ref{eq:K(1,mu)}) is a consequence of  lemma~\ref{lemm:MLF_SingPoints} In fact, in~the case when the parameter $\mu$ takes integer real values, the~first and second items of lemma~\ref{lemm:MLF_SingPoints} turn out to be true. As~we have already pointed out, in~this case the arc of the circle $C_\epsilon$ of the contour (\ref{eq:loop_gammaZeta_rho=1}) is a closed circle and the half-lines $S_1$ and $S_2$ run along the positive part of a real axis in mutually opposite directions. Consequently, the~sum of the integrals along the half-lines $S_1$ and $S_2$ will be equal to zero. Next, it is necessary to recall that the improper integral in the expression (\ref{eq:MLF_int0_piRho_rho=1}) just corresponds to the sum of the integrals along the half-lines $S_1$ and $S_2$. Therefore, with~integer real values  $\mu$ it should be equal to zero which has been obtained. A~definite integral in (\ref{eq:MLF_int0_piRho_rho=1}) corresponds to integration along the closed circle. Therefore, one can use the theory of residues to calculate it.
\end{remark}

The calculation of the integral in (\ref{eq:MLF_int_rho=1}) using the theory of residues with integer real values of the parameter $\mu$ was carried out in the work~\cite{Saenko2020a}. For~completeness of the statement, we will give the results obtained in this paper and formulate them in the form of a corollary to Lemma~\ref{lemm:MLF_SingPoints}.

\begin{corollary}\label{coroll:MLF_case_rho=1}
For the values of the parameters $\rho=1$, $\delta_{1\rho}=\delta_{2\rho}=\pi$, any complex $z$,  satisfying the condition $\pi/2<\arg z<3\pi/2$ and for integer real values of the parameter $\mu=n, n=0,\pm1,\pm2,\pm3,\dots$ the Mittag-Leffler function has the~form:
\begin{enumerate}
\item if $n\leqslant1$ (i.e., $n=1,0,-1,-2,-3,\dots$), then
\begin{equation*}
E_{1,n}(z)=e^z z^{1-n},
\end{equation*}

\item if $n\geqslant2$ (i.e., $n=2,3,4,\dots$), then
\begin{equation*}
E_{1,n}(z)=z^{1-n}\left(e^z-\sum_{k=0}^{n-2}\frac{z^k}{k!}\right).
\end{equation*}
\end{enumerate}
\end{corollary}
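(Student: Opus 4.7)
The plan is to return to the contour representation~(\ref{eq:MLF_int_rho=1}) of Theorem~\ref{lemm:MLF_int} specialised at $\rho=1$, $\delta_{1\rho}=\delta_{2\rho}=\pi$, and to evaluate the contour integral by residues. The admissibility condition~(\ref{eq:z_cond_lemm}) at these parameter values becomes exactly $\pi/2<\arg z<3\pi/2$, which is the hypothesis on~$z$. So no analytic continuation has to be justified; the corollary is purely a computation of the integral in~(\ref{eq:MLF_int_rho=1}) when $\mu=n\in\mathbb{Z}$.

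The key structural observation (already emphasised in the remark preceding the corollary) is that for $\mu=n$ integer the integrand $\Phi_{1,n}(\zeta,z)=\frac{1}{2\pi i}\frac{e^{\zeta z}(\zeta z)^{1-n}}{\zeta-1}$ is single-valued in a punctured neighbourhood of $\zeta=0$, by Lemma~\ref{lemm:MLF_SingPoints}. Because the rays $S_1$ and $S_2$ of~(\ref{eq:loop_gammaZeta_rho=1}) both lie on the positive real axis but are traversed in opposite directions, their contributions to the integral cancel. The arc $C_\epsilon$, which runs from $\arg\zeta=-2\pi$ to $\arg\zeta=0$, therefore closes into a full circle of radius $1+\epsilon$ centred at the origin, traversed counterclockwise. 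Both singular points $\zeta=0$ and $\zeta=1$ lie inside it, so by the residue theorem
\begin{equation*}
  E_{1,n}(z)=2\pi i\bigl(\operatorname*{Res}_{\zeta=1}\Phi_{1,n}(\zeta,z)+\operatorname*{Res}_{\zeta=0}\Phi_{1,n}(\zeta,z)\bigr).
\end{equation*}

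For the pole at $\zeta=1$, which is simple irrespective of $n$, a direct substitution gives $\operatorname*{Res}_{\zeta=1}\Phi_{1,n}=\frac{1}{2\pi i}e^z z^{1-n}$. For the behaviour at $\zeta=0$ we split on the two cases of Lemma~\ref{lemm:MLF_SingPoints}. If $n\leqslant 1$ then $(\zeta z)^{1-n}$ is a polynomial in $\zeta$, so case~1 of Lemma~\ref{lemm:MLF_SingPoints} applies with $m_1=1-n\geqslant 0$, the point $\zeta=0$ is regular, and the residue there is zero; this yields immediately the first formula $E_{1,n}(z)=e^z z^{1-n}$. If $n\geqslant 2$, case~2 applies with $m_2=n-1$, and $\zeta=0$ is a pole of order $n-1$.

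The main work is then to compute the residue at $\zeta=0$ in the case $n\geqslant2$. I would do it by reading off the coefficient of $\zeta^{n-2}$ in the product of the two elementary Taylor series
\begin{equation*}
  e^{\zeta z}=\sum_{j=0}^{\infty}\frac{z^j}{j!}\zeta^j,\qquad \frac{1}{\zeta-1}=-\sum_{k=0}^{\infty}\zeta^k,
\end{equation*}
which are both convergent in $|\zeta|<1$. The Cauchy product gives coefficient $-\sum_{j=0}^{n-2}z^j/j!$, whence $\operatorname*{Res}_{\zeta=0}\Phi_{1,n}(\zeta,z)=-\frac{z^{1-n}}{2\pi i}\sum_{k=0}^{n-2}\frac{z^k}{k!}$. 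Inserting both residues into the residue formula produces exactly the second asserted identity. No step is genuinely difficult; the only point that requires care is the orientation bookkeeping on the arc $C_\epsilon$ (verifying that $\arg\zeta$ increases from $-2\pi$ to $0$, so the circle is traversed in the positive sense) and the confirmation, via Lemma~\ref{lemm:MLF_SingPoints}, that no branch cut survives for integer $n$ so that the closure argument for $S_1\cup S_2$ is legitimate.
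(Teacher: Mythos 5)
Your proposal is correct and follows exactly the route the paper indicates: the paper itself defers the proof to~\cite{Saenko2020a}, but the remark preceding the corollary lays out precisely your argument (for integer $\mu$ Lemma~\ref{lemm:MLF_SingPoints} rules out a branch point at $\zeta=0$, so the contributions of $S_1$ and $S_2$ cancel, $C_\epsilon$ closes into a positively oriented circle, and the integral is evaluated by residues at $\zeta=1$ and, for $n\geqslant2$, at the pole of order $n-1$ at $\zeta=0$). Your residue computations at both points are correct and reproduce the stated formulas.
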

The proof of this corollary can be found in the paper~\cite{Saenko2020a}.

\section{Integral Representation  ``B''}

The integral representation ``A'' consists of the sum of two integrals. As~it has been found earlier, the~improper integral in (\ref{eq:MLF_int0}) corresponds to the sum of integrals along the half-lines $S_1$ and $S_2$ of the contour $\gamma_\zeta$ in the representation (\ref{eq:MLF_int}), a~definite integral corresponds to the integral along the arc of the circle  $C_\epsilon$.  As~a result, in~analytical studies of the function $E_{\rho,\mu}(z)$, as~well as in the solution of problems where it is encountered, one should conduct studies of these two integrals. This causes certain difficulties. As~it will be shown below in the representation  (\ref{eq:MLF_int0}) one can get rid of an integral on the arc of the circle  $C_\epsilon$ and write the integral representation for the function $E_{\rho,\mu}(z)$ in the form of an improper integral. This representation will be much easier to use. However, as~a result of such a transition, some restrictions are imposed on the parameter values $\mu$.  The~integral representation of the Mittag-Leffler function represented in the following theorem will be called the representation ``B''.

\begin{theorem}[Representation ``B'']\label{lemm:MLF_int1}
For any real $\rho>1/2$ and any complex $\mu$ satisfying the condition $\Re\mu < 1+1/\rho$
for the function $E_{\rho,\mu}(z)$, the following integral representations are~valid:
\begin{enumerate}
  \item for any real $\delta_{1\rho}, \delta_{2\rho}$ satisfying the conditions  \begin{equation}\label{eq:deltaRhoCond_lem}
  \begin{array}{ll}
    \frac{\pi}{2\rho}<\delta_{1\rho}\leqslant\frac{\pi}{\rho},\quad \frac{\pi}{2\rho}<\delta_{2\rho}\leqslant\frac{\pi}{\rho}, &\quad \mbox{if}\quad\rho>1,  \\
     \frac{\pi}{2\rho}<\delta_{1\rho}<\pi,\quad \frac{\pi}{2\rho}<\delta_{2\rho}<\pi, &\quad \mbox{if}\quad 1/2<\rho\leqslant1,
  \end{array}
 \end{equation}
 and  any complex $z$ satisfying the condition
 $\frac{\pi}{2\rho}-\delta_{2\rho}+\pi<\arg z<-\frac{\pi}{2\rho}+\delta_{1\rho}+\pi$
 the Mittag-Leffler function can be represented in the form
\begin{equation}\label{eq:MLF_int1}
    E_{\rho,\mu}(z)=\int_{0}^{\infty}K_{\rho,\mu}(r,-\delta_{1\rho},\delta_{2\rho},z)dr,
  \end{equation}
 where $K_{\rho,\mu}(r,\varphi_1,\varphi_2,z)$ has the form (\ref{eq:K_lemm_int0});

  \item if  $1/2<\rho\leqslant1$ and $\delta_{1\rho}=\pi$, $\frac{\pi}{2\rho}<\delta_{2\rho}<\pi$, then for any complex $z$ satisfying the condition  $\frac{\pi}{2\rho}-\delta_{2\rho}+\pi<\arg z<-\frac{\pi}{2\rho}+2\pi$, the~Mittag-Leffler function can be represented in the form
  \begin{multline}\label{eq:lemm_MLF_int1_case2}
    E_{\rho,\mu}(z)=\int_{0}^{\infty}K_{\rho,\mu}^\prime(r,\delta_{2\rho},z)dr- \int_{0}^{1-\varepsilon_1}K_{\rho,\mu}^\prime(r,-\pi,z)dr -\\
    \int_{1+\varepsilon_1}^{\infty}K_{\rho,\mu}^\prime(r,-\pi,z)dr+
    \int_{-2\pi}^{-\pi}P_{\rho,\mu}^\prime(\varepsilon_1,\psi,-2,z)d\psi,
  \end{multline}
   where $\varepsilon_1$ is an arbitrary real number satisfying the condition $0<\varepsilon_1<1$,
  \begin{equation*}
    K_{\rho,\mu}^\prime(r, \varphi,z) =\frac{\rho}{2\pi i}\frac{\exp\left\{(zr)^\rho e^{-i\pi\rho}\cos(\rho\varphi)\right\}}{r^2+2r\cos\varphi+1}
    (zr)^{\rho(1-\mu)} e^{i[\eta(r,\varphi,z)-\pi\rho(1-\mu)]}\left(r+e^{i\varphi}\right),
  \end{equation*}
  where $\eta(r,\varphi,z)$ has the form (\ref{eq:eta_lemm_int0}) and
  \begin{multline*}
    P_{\rho,\mu}^\prime(\tau,\psi,k,z)=\frac{\rho\tau}{2\pi}\frac{\exp\left\{(z r(\tau,\psi))^\rho \cos(\rho\varphi(\tau,\psi,k))\right\} }{(r(\tau,\psi))^2-2 r(\tau,\psi) \cos(\varphi(\tau,\psi,k))+1}\times\\
    \times (zr(\tau,\psi))^{\rho(1-\mu)} e^{i[\chi^\prime(\tau,\psi,k,z)+\psi]}\left(r(\tau,\psi)
    e^{-i\varphi(\tau,\psi,k)}-1\right),
  \end{multline*}
  where
  \begin{align*}
    \chi^\prime(\tau,\psi,k,z) & =(z r(\tau,\psi))^\rho \sin(\rho\varphi(\tau,\psi,k)) +\rho(1-\mu)\varphi(\tau,\psi,k), \\
    r(\tau,\psi) & =\sqrt{\tau^2+2\tau\cos\psi+1},\\
    \varphi(\tau,\psi,k)&=\arctan\left(\frac{\tau\sin\psi}{\tau\cos\psi +1}\right)+k\pi;
  \end{align*}

  \item if $1/2<\rho\leqslant1$ and $\frac{\pi}{2\rho}<\delta_{1\rho}<\pi$, $\delta_{2\rho}=\pi$, then for any complex $z$ satisfying the condition $\frac{\pi}{2\rho}<\arg z<-\frac{\pi}{2\rho}+\delta_{1\rho}+\pi$, the~Mittag-Leffler function can be represented in the form
  \begin{multline}\label{eq:lemm_MLF_int1_case3}
    E_{\rho,\mu}(z)=\int_{0}^{1-\varepsilon_1}K_{\rho,\mu}^\prime(r,\pi,z)dr- \int_{1+\varepsilon_1}^{\infty}K_{\rho,\mu}^\prime(r,\pi,z)dr +\\
    +\int_{-\pi}^{0}P_{\rho,\mu}^\prime(\varepsilon_1,\psi,0,z)d\psi-
    \int_{0}^{\infty}K_{\rho,\mu}^\prime(r,-\delta_{1\rho},z)dr,
  \end{multline}
  where $\varepsilon_1$ is an arbitrary real number satisfying the condition $0<\varepsilon_1<1$;

  \item if $1/2<\rho\leqslant1$ and $\delta_{1\rho}=\delta_{2\rho}=\pi$, then for any complex $z$  satisfying the condition $\frac{\pi}{2\rho}<\arg z<-\frac{\pi}{2\rho}+2\pi$, the~Mittag-Leffler function can be represented in the form
  \begin{multline}\label{eq:lemm_MLF_int1_case4}
    E_{\rho,\mu}(z)=\int_{0}^{1-\varepsilon_1}K_{\rho,\mu}(r,\pi,z)dr-
    \int_{1+\varepsilon_1}^{\infty}K_{\rho,\mu}(r,\pi,z)dr +\\
    +\int_{-\pi}^{0}P_{\rho,\mu}^\prime(\varepsilon_1,\psi,0,z)d\psi+
    \int_{-2\pi}^{-\pi}P_{\rho,\mu}^\prime(\varepsilon_1,\psi,-2,z)d\psi,
  \end{multline}
  where $\varepsilon_1$ is an arbitrary real number satisfying the condition $0<\varepsilon_1<1$ and $K_{\rho,\mu}(r,\delta,z)$ has the form~(\ref{eq:K_coroll_int0}).
\end{enumerate}
\end{theorem}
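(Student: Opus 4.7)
The plan is to start from the representation ``A'' (Theorem~\ref{lemm:MLF_int0}) and deform the arc $C_\epsilon$ of radius $1+\epsilon$ inward to an arbitrarily small arc of radius $\varepsilon_1$ around the origin, using the analyticity of $\Phi_{\rho,\mu}(\zeta,z)$ in the swept sector (which follows from Lemma~\ref{lemm:MLF_SingPoints}: the only singularities are the pole at $\zeta=1$ and possibly a branch point at $\zeta=0$). Cauchy's theorem then produces an equivalent representation with the improper integrals extended down to $\varepsilon_1$, and the whole point of the hypothesis $\Re\mu<1+1/\rho$ will be that the small arc integral vanishes in the limit $\varepsilon_1\to 0$. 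Indeed, from (\ref{eq:lemm_int0_P}) one reads off $P_{\rho,\mu}(\varepsilon_1,\varphi,z)\sim \varepsilon_1^{1+\rho(1-\Re\mu)}$ as $\varepsilon_1\to 0$, which is integrable in $\varphi$ on a bounded range and vanishes in the limit precisely when $1+\rho(1-\Re\mu)>0$, i.e.\ $\Re\mu<1+1/\rho$. The same scaling shows that $K_{\rho,\mu}(r,-\delta_{1\rho},\delta_{2\rho},z)$ is integrable at $r=0$, so the lower endpoint of the improper integral extends to $0$.

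For case~1, the angles $-\delta_{1\rho}-\pi$ and $\delta_{2\rho}-\pi$ lie strictly in $(-2\pi,0)$ under the stated hypotheses on $\delta_{1\rho},\delta_{2\rho}$, so the closed sectorial region bounded by the two half-lines and the two arcs (at radii $\varepsilon_1$ and $1+\epsilon$) avoids the pole $\zeta=1$ entirely. Cauchy's theorem gives
\begin{equation*}
  \int_{1+\epsilon}^{\infty}\!\! K_{\rho,\mu}\,dr+\int_{-\delta_{1\rho}-\pi}^{\delta_{2\rho}-\pi}\!\! P_{\rho,\mu}(1+\epsilon,\varphi,z)\,d\varphi =\int_{\varepsilon_1}^{\infty}\!\! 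K_{\rho,\mu}\,dr+\int_{-\delta_{1\rho}-\pi}^{\delta_{2\rho}-\pi}\!\! P_{\rho,\mu}(\varepsilon_1,\varphi,z)\,d\varphi,
\end{equation*}
and letting $\varepsilon_1\to 0$ and invoking the scaling estimate above yields (\ref{eq:MLF_int1}).

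Cases~2, 3, 4 differ only in that one (or both) of $\delta_{1\rho},\delta_{2\rho}$ equals $\pi$, so the corresponding half-line lies along the positive real axis at argument $-2\pi$ or $0$ and passes through the pole $\zeta=1$. The deformation of $C_\epsilon$ must therefore include a small semicircular detour of radius $\varepsilon_1$ around $\zeta=1$, taken on the same bank of the cut as the half-line so that no residue is picked up. Parametrizing the detour as $\zeta=1+\varepsilon_1 e^{i\psi}$ and rewriting $\zeta$ in polar form as $\zeta=r(\varepsilon_1,\psi)\,e^{i\varphi(\varepsilon_1,\psi,k)}$ with $r$ and $\varphi$ as in the statement, one obtains exactly the kernel $P'_{\rho,\mu}(\varepsilon_1,\psi,k,z)$ after the polar substitution used in the proof of Theorem~\ref{lemm:MLF_int0}. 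The integer $k$ is fixed by which bank the detour lives on: $k=0$ on the upper bank ($\arg\zeta$ near $0$), $k=-2$ on the lower bank ($\arg\zeta$ near $-2\pi$). The remaining straight pieces of the half-line split into $[0,1-\varepsilon_1]$ and $[1+\varepsilon_1,\infty)$, each contributing $\pm\int K'_{\rho,\mu}\,dr$ with sign dictated by orientation. Applying this recipe to $S_1$ alone (case~2), $S_2$ alone (case~3), or both $S_1$ and $S_2$ (case~4, where the two detours use $k=-2$ and $k=0$ respectively) reproduces (\ref{eq:lemm_MLF_int1_case2}), (\ref{eq:lemm_MLF_int1_case3}), and (\ref{eq:lemm_MLF_int1_case4}).

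The main obstacle is bookkeeping rather than estimation: the function $(\zeta z)^{\rho(1-\mu)}$ is multi-valued, so every deformation step, every semicircle direction, and every $k$-label must be tracked consistently with the original cut along the positive real axis and with the orientation of $\gamma_\zeta$. One must in particular verify that the detour around $\zeta=1$ stays on the same bank (so the Cauchy deformation is legitimate and no $2\pi i$ residue appears), and that in case~4 the two banks meet only at infinity and at the origin, so the single deformation produces two independent detours with the two stated values of $k$. Once the arguments and orientations are aligned with those of Theorem~\ref{lemm:MLF_int0}, the only analytic input beyond Theorem~\ref{lemm:MLF_int0} is the scaling $P_{\rho,\mu}(\varepsilon_1,\varphi,z)=O(\varepsilon_1^{1+\rho(1-\Re\mu)})$, which both legitimizes the limit $\varepsilon_1\to 0$ around the origin and pinpoints the condition $\Re\mu<1+1/\rho$ as sharp for the validity of representation ``B''.
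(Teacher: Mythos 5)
Your proposal follows essentially the same route as the paper: deforming the arc $C_\epsilon$ inward via an auxiliary closed contour and Cauchy's theorem (the paper's contour $\Gamma$ and its variants $\Gamma'$, $\Gamma''$, $\Gamma'''$), identifying the vanishing of the small arc around the origin under the scaling $O\bigl(\varepsilon^{1+\rho(1-\Re\mu)}\bigr)$ as the source of the condition $\Re\mu<1+1/\rho$, and handling cases 2--4 by semicircular detours around the pole $\zeta=1$ on the appropriate banks with $k=0$ and $k=-2$. The argument is correct and matches the paper's proof in all essentials, differing only in presentation (you phrase it as a deformation starting from representation ``A'' rather than as a separate auxiliary integral added to the contour representation of Theorem~\ref{lemm:MLF_int}).
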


\begin{proof}
The starting point of the proof is theorem~\ref{lemm:MLF_int} and the integral representation  (\ref{eq:MLF_int}) which is defined in it. In~view of the notation (\ref{eq:phiFun}), the~representation (\ref{eq:MLF_int}) will be written in the form
\begin{equation}\label{eq:lemm_MLF_int1_tmp0}
  E_{\rho,\mu}(z)=  \int_{\gamma_\zeta}\frac{\phi_{\rho,\mu}(\zeta,z)}{\zeta-1}d\zeta.
\end{equation}
The problem consists in calculating this contour~integral.

We consider an auxiliary integral
\begin{equation}\label{eq:intI_aux}
  I=\int_{\Gamma}\frac{\phi_{\rho,\mu}(\zeta,z)}{\zeta-1}d\zeta,
\end{equation}
where the contour $\Gamma$ (see~Figure~\ref{fig:loops_aux})  consists of the arc of the circle $C_\epsilon$ of radius $1+\epsilon$ with the center at the origin of coordinates, the~segment $\Gamma_2$, the~arc of the circle $C_\varepsilon$ the radius $\varepsilon$ with the center at the origin of coordinates and the segment $\Gamma_1$, which are defined in the following way:
\begin{equation}\label{eq:loop_Gamma_aux}
  \Gamma=\left\{\begin{array}{l}
                  C_\epsilon=\{\zeta:\ -\delta_{1\rho}-\pi\leqslant \arg\zeta \leqslant \delta_{2\rho}-\pi,\ |\zeta|=1+\epsilon\}, \\
                  \Gamma_2=\{\zeta:\ \arg\zeta=\delta_{2\rho}-\pi,\ 1+\epsilon \geqslant| \zeta|\geqslant \varepsilon\}, \\
                  C_\varepsilon=\{\zeta:\ \delta_{2\rho}-\pi\leqslant \arg\zeta \leqslant -\delta_{1\rho}-\pi,\ |\zeta|=\varepsilon\}, \\
                  \Gamma_1=\{\zeta:\ \arg\zeta=-\delta_{1\rho}-\pi,\ \varepsilon\leqslant|\zeta| \leqslant1+\epsilon\}.
                \end{array}\right.
\end{equation}
The contour is traversed in the positive direction. The~cut of the complex plane $\zeta$ goes along the positive part of a real~axis.

\begin{figure}[H]
  \centering
  \includegraphics[width=0.4\textwidth]{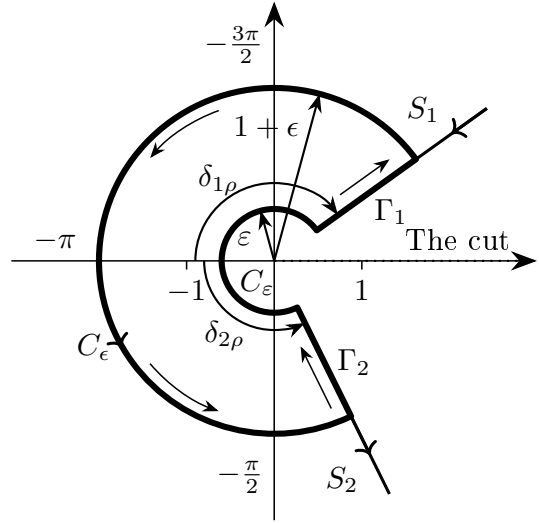}
  \caption{The auxiliary contour of integration $\Gamma$.}\label{fig:loops_aux}
\end{figure}

As it follows from lemma~\ref{lemm:MLF_SingPoints}, depending on a value of the parameter $\mu$, the integrand (\ref{eq:intI_aux}) has one or two poles. If~$\Re\mu\geqslant2$, then there are two poles in the points $\zeta=0$ and $\zeta=1$. If~$\Re\mu\leqslant1$, then there is one pole in the point $\zeta=1$. In~addition, at~non-integer values $\rho$ and $\mu$ the point $\zeta=0$ is the branch point. In~the case when $\frac{1}{2}<\rho\leqslant1$ and $\delta_{1\rho}=\pi$ the segment $\Gamma_1$ will pass through a singular point $\zeta=1$ (Figure~\ref{fig:loops_aux}). The~similar situation will be at $\frac{1}{2}<\rho\leqslant1$ and $\delta_{2\rho}=\pi$. In~this case, the segment $\Gamma_2$ will pass  through the point $\zeta=1$. These two cases will be considered separately. The~case $\frac{1}{2}<\rho\leqslant1$, $\delta_{1\rho}=\delta_{2\rho}=\pi$ also requires a separate consideration. In~this case each of the segments  $\Gamma_1$ and $\Gamma_2$ passes through a singular point $\zeta=1$. In~view of the foregoing, it is necessary to consider four~cases:
\begin{enumerate}
  \item
\begin{equation}\label{eq:deltaRhoCond_lem_proof}
  \begin{array}{ll}
    \frac{\pi}{2\rho}<\delta_{1\rho}\leqslant\frac{\pi}{\rho},\quad \frac{\pi}{2\rho}<\delta_{2\rho}\leqslant\frac{\pi}{\rho}, &\quad \mbox{if}\quad\rho>1,  \\
     \frac{\pi}{2\rho}<\delta_{1\rho}<\pi,\quad \frac{\pi}{2\rho}<\delta_{2\rho}<\pi, &\quad \mbox{if}\quad \frac{1}{2}<\rho\leqslant1,
  \end{array}
 \end{equation}
  \item $\frac{1}{2}<\rho\leqslant1$, $\delta_{1\rho}=\pi$, $\frac{\pi}{2\rho}<\delta_{2\rho}<\pi$,
  \item $\frac{1}{2}<\rho\leqslant1$, $\delta_{2\rho}=\pi$, $\frac{\pi}{2\rho}<\delta_{1\rho}<\pi$,
  \item $\frac{1}{2}<\rho\leqslant1$, $\delta_{1\rho}=\pi$, $\delta_{2\rho}=\pi$.
\end{enumerate}

\emph{Case 1.} We consider the first case at the beginning. Directly calculating the integral $I$ and substituting the variable of integration  $\zeta=r e^{i\varphi}$ we obtain
\begin{multline}
  I=\int_{C_\epsilon}\frac{\phi_{\rho,\mu}(\zeta,z)}{\zeta-1}d\zeta+\int_{\Gamma_2}\frac{\phi_{\rho,\mu}(\zeta,z)}{\zeta-1}d\zeta +\int_{C_\varepsilon}\frac{\phi_{\rho,\mu}(\zeta,z)}{\zeta-1}d\zeta+ \int_{\Gamma_1}\frac{\phi_{\rho,\mu}(\zeta,z)}{\zeta-1}d\zeta=\\
 =i\int\limits_{-\delta_{1\rho}-\pi}^{\delta_{2\rho}-\pi} \left. \frac{\phi_{\rho,\mu}(r e^{i\varphi},z)}{re^{i\varphi}-1}re^{i\varphi}d\varphi\right|_{r=1+\epsilon}+ \int\limits_{1+\epsilon}^\varepsilon\left.\frac{\phi_{\rho,\mu}(r e^{i\varphi},z)}{re^{i\varphi}-1} e^{i\varphi}dr \right|_{\varphi=\delta_{2\rho}-\pi}
  +i\int\limits_{\delta_{2\rho}-\pi}^{-\delta_{1\rho}-\pi} \left. \frac{\phi_{\rho,\mu}(r e^{i\varphi},z)}{re^{i\varphi}-1}re^{i\varphi}d\varphi\right|_{r=\varepsilon}\\
  + \int\limits_{\varepsilon}^{1+\epsilon}\left.\frac{\phi_{\rho,\mu}(r e^{i\varphi},z)}{re^{i\varphi}-1} e^{i\varphi}dr \right|_{\varphi=-\delta_{1\rho}-\pi}
  = i\hspace{-3mm}\int\limits_{-\delta_{1\rho}-\pi}^{\delta_{2\rho}-\pi}\hspace{-2mm} \frac{\phi_{\rho,\mu}((1+\epsilon) e^{i\varphi},z)}{(1+\epsilon)e^{i\varphi}-1}(1+\epsilon)e^{i\varphi}d\varphi
  +\int\limits_{1+\epsilon}^\varepsilon\frac{\phi_{\rho,\mu}(r e^{i(\delta_{2\rho}-\pi)},z)}{re^{i(\delta_{2\rho}-\pi)}-1} e^{i(\delta_{2\rho}-\pi)}dr \\
  + i\hspace{-3mm}\int\limits_{\delta_{2\rho}-\pi}^{-\delta_{1\rho}-\pi} \frac{\phi_{\rho,\mu}(\varepsilon e^{i\varphi},z)}{\varepsilon e^{i\varphi}-1}\varepsilon e^{i\varphi}d\varphi+
  \int\limits_{\varepsilon}^{1+\epsilon}\frac{\phi_{\rho,\mu}(r e^{i(-\delta_{1\rho}-\pi)},z)}{re^{i(-\delta_{1\rho}-\pi)}-1} e^{i(-\delta_{1\rho}-\pi)}dr
  =I_{C_\epsilon}+I_{\Gamma_2}+I_{C_\varepsilon}+I_{\Gamma_1}.\label{eq:intI_tmp1}
\end{multline}

Now we let $\varepsilon\to0$ in this expression and we will study the behavior of $I_{C_\varepsilon}$, $I_{\Gamma_1}$ and $I_{\Gamma_2}$.  We~consider the integral $I_{C_\varepsilon}$ at the beginning. For~this integral the relation is true
\begin{equation*}
  \lim_{\varepsilon\to0}|I_{C_\varepsilon}|\leqslant \lim_{\varepsilon\to0}\int\limits_{\delta_{2\rho}-\pi}^{-\delta_{1\rho}-\pi} \left|\frac{\phi_{\rho,\mu}(\varepsilon e^{i\varphi},z)}{\varepsilon e^{i\varphi}-1}\varepsilon e^{i\varphi}\right| |d\varphi|.
\end{equation*}
We consider the integrand. To~get rid of the complexity in the denominator we multiply and divide the integrand by $\varepsilon e^{-i\varphi}-1$. We also represent $z=|z|e^{i\arg z}$ and $\mu=\mu_R+i\mu_I$, and~for a power function we will use the representation $\xi^a=\exp\{a\ln\xi\}, \xi>0$. As~a result, we obtain
\begin{multline*}
  \lim_{\varepsilon\to0}\left|\frac{\phi_{\rho,\mu}(\varepsilon e^{i\varphi},z)}{\varepsilon e^{i\varphi}-1}\varepsilon e^{i\varphi}\right|=
  \frac{\rho}{2\pi}\lim_{\varepsilon\to0}\left|\frac{\exp\left\{\left(z \varepsilon e^{i\varphi}\right)^\rho\right\}\left(z\varepsilon e^{i\varphi}\right)^{\rho(1-\mu)}\varepsilon e^{i\varphi}\left(\varepsilon e^{-i\varphi}-1\right)} {\left(\varepsilon e^{i\varphi}-1\right)\left(\varepsilon e^{-i\varphi}-1\right)}\right|\\
  =\frac{\rho}{2\pi}\lim_{\varepsilon\to0}\left|\frac{\exp\left\{(|z|\varepsilon)^\rho e^{i\rho(\varphi+\arg z)}\right\}(|z|\varepsilon)^{\rho(1-\mu_R-i\mu_I)} e^{i\rho(1-\mu_R-i\mu_I)(\varphi+\arg z)}\varepsilon\left(\varepsilon-e^{i\varphi}\right)} {\varepsilon^2-2\varepsilon\cos\varphi+1}\right|\\
  =\frac{\rho}{2\pi}\lim_{\varepsilon\to0}\left|\frac{1}{\varepsilon^2-2\varepsilon\cos\varphi+1}\exp\left\{(|z|\varepsilon)^\rho e^{i\rho(\varphi+\arg z)}+ \rho(1-\mu_R-i\mu_I)\ln(|z|\varepsilon) +\right.\right.\\
   +\left.\left.i\rho(1-\mu_R-i\mu_I)(\varphi+\arg z)+ \ln\varepsilon\right\} (\varepsilon-e^{i\varphi})\right|\\
  =\frac{\rho}{2\pi}\lim_{\varepsilon\to0}\frac{1}{\varepsilon^2-2\varepsilon\cos\varphi+1}
  \left|\exp\left\{(|z|\varepsilon)^\rho\cos(\rho(\varphi+\arg z))+\rho(1-\mu_R)\ln(|z|\varepsilon)+\rho(\varphi+\arg z)\mu_I+\ln\varepsilon\right.\right.\\ \left.\left.+i\left[(|z|\varepsilon)^\rho\sin(\rho(\varphi+\arg z))+\rho(1-
  \mu_R)(\varphi+\arg z)-\rho\ln(|z|\varepsilon)\mu_I\right] \right\}\left(\varepsilon-e^{-i\varphi}\right)\right|\\
  =\frac{\rho}{2\pi}\lim_{\varepsilon\to0}\frac{\left|e^{A+iB}(\varepsilon-e^{i\varphi})\right|}{\varepsilon^2-2\varepsilon\cos\varphi+1},
\end{multline*}
where
\begin{align*}
  A =&(|z|\varepsilon)^\rho\cos(\rho(\varphi+\arg z))+\rho(1-\mu_R)\ln(|z|\varepsilon)+\rho(\varphi+\arg z)\mu_I+\ln\varepsilon, \\
  B =& (|z|\varepsilon)^\rho\sin(\rho(\varphi+\arg z))+\rho(1-\mu_R)(\varphi+\arg z)-\rho\mu_I\ln(|z|\varepsilon).
\end{align*}
For the numerator we have the estimate
\begin{equation*}
  \left|e^{A+iB}(\varepsilon-e^{i\varphi})\right|\leqslant \left|e^{A+iB}\varepsilon\right|+\left|e^{A+iB+i\varphi}\right|=e^A(\varepsilon+1).
\end{equation*}
Thus,
\begin{multline*}
  \frac{\rho}{2\pi}\lim_{\varepsilon\to0}\frac{\left|e^{A+iB}(\varepsilon-e^{i\varphi})\right|}{\varepsilon^2-2\varepsilon\cos\varphi+1}
  \leqslant \frac{\rho}{2\pi}\lim_{\varepsilon\to0}\frac{e^A(\varepsilon+1)}{\varepsilon^2-2\varepsilon\cos\varphi+1}\\
  =\frac{\rho}{2\pi}\lim_{\varepsilon\to0}\frac{1}{\varepsilon^2-2\varepsilon\cos\varphi+1}
  \exp\left\{(|z|\varepsilon)^\rho\cos(\rho(\varphi+\arg z))\right.\\
  +\left.\rho(1-\mu_R)\ln(|z|\varepsilon)+\rho(\varphi+\arg z)\mu_I+\ln\varepsilon +\ln(\varepsilon+1)\right\}\\
  =\frac{\rho}{2\pi}\lim_{\varepsilon\to0} \frac{1}{\varepsilon^2-2\varepsilon\cos\varphi+1} \exp\left\{(|z|\varepsilon)^\rho\cos(\rho(\varphi+\arg z))+\right.\\
  +\left.(\rho(1-\mu_R)+1)\ln\varepsilon +\ln(\varepsilon+1)+\rho(1-\mu_R)\ln(|z|)+\rho\mu_I(\varphi+\arg z)\right\} \\
  =\left\{\begin{array}{cc}
            \displaystyle 0, &\mu_R<1+\frac{1}{\rho}, \\
            \displaystyle \frac{\rho}{2\pi|z|}e^{\rho(\varphi+\arg z)\mu_I}, & \mu_R=1+\frac{1}{\rho}.
          \end{array}\right.
\end{multline*}
From this it follows that
\begin{equation}\label{eq:IntIC_varepsTo0}
  \lim_{\varepsilon\to0} I_{C_\varepsilon}=0,\quad \text{if}\quad \mu_R<1+1/\rho.
\end{equation}

Now we consider the behavior of the integral $I_{\Gamma_1}$ at $\varepsilon\to0$. We have the estimate
\begin{multline*}
  \lim_{\varepsilon\to0}\left|\int\limits_{\varepsilon}^{1+\epsilon}\frac{\phi_{\rho,\mu}(r e^{i(-\delta_{1\rho}-\pi)},z)}{re^{i(-\delta_{1\rho}-\pi)}-1} e^{i(-\delta_{1\rho}-\pi)}dr \right| \leqslant\lim_{\varepsilon\to0}\int\limits_{\varepsilon}^{1+\epsilon}\left|\frac{\phi_{\rho,\mu}(r e^{i(-\delta_{1\rho}-\pi)},z)}{re^{i(-\delta_{1\rho}-\pi)}-1} e^{i(-\delta_{1\rho}-\pi)}\right| |dr|\\
  =\int\limits_{0}^{1+\epsilon}\left|\frac{\phi_{\rho,\mu}(r e^{i(-\delta_{1\rho}-\pi)},z)}{re^{i(-\delta_{1\rho}-\pi)}-1} e^{i(-\delta_{1\rho}-\pi)}\right| |dr|.
\end{multline*}
Consequently, it is necessary to study the behavior of the integrand at $r\to0$.  For~convenience, we introduce the notation $\varphi_1=-\delta_{1\rho}-\pi$. Further, similarly to the previous case, we get rid of the complexity in the denominator. To~do this, we multiply and divide the integrand by the complex conjugate value of the denominator, i.e.,~by $(re^{-i\varphi_1}-1)$ and represent $z=|z|e^{i\arg z}$, $\mu=\mu_R+\mu_I$.  For~a power function we will make use of the representation $\xi^a=\exp\{a\ln\xi\}$. Using (\ref{eq:phiFun}) we get
\begin{multline*}
  \lim_{r\to0}\left|\frac{\phi_{\rho,\mu}(r e^{i(-\delta_{1\rho}-\pi)},z)}{re^{i(-\delta_{1\rho}-\pi)}-1} e^{i(-\delta_{1\rho}-\pi)}\right|
  =\frac{\rho}{2\pi}\lim_{r\to0}\left|\frac{\exp\left\{\left(zre^{i\varphi_1}\right)^\rho\right\} \left(zre^{i\varphi_1}\right)^{\rho(1-\mu)}e^{i\varphi_1}(re^{-i\varphi_1}-1)} {(re^{i\varphi_1}-1)(re^{-i\varphi_1}-1)} \right| \\
  =\frac{\rho}{2\pi}\lim_{r\to0}\left|\frac{1}{r^2-2r\cos\varphi_1+1} \exp\left\{(|z|r)^\rho e^{i\rho(\varphi_1+\arg z)}\right\}
  (|z|r)^{\rho(1-\mu_R+i\mu_I)}e^{i\rho(1-\mu_R-i\mu_I)(\varphi_1+\arg z)}(r-e^{i\varphi_1})\right|\\
  =\frac{\rho}{2\pi}\lim_{r\to0}\frac{1}{r^2-2r\cos\varphi_1+1}
  \left|\exp\left\{ (|z|r)^\rho\cos(\rho(\varphi_1+\arg z))+\rho\mu_I (\varphi_1+\arg z)+ \rho(1-\mu_R)\ln(|z|r) \right.\right.\\
  \left.\left.+i\left[(|z|r)^\rho\sin(\rho(\varphi_1+\arg z))+ \rho(1-\mu_R)(\varphi_1+\arg z)-\rho\mu_I\ln(|z|r)\right] \right\} (r-e^{i\varphi_1})\right|\\
  =\frac{\rho}{2\pi}\lim_{r\to0}\frac{\left|e^{A_1+iB_1}(r-e^{i\varphi_1})\right|}{r^2-2r\cos\varphi_1+1},
\end{multline*}
where
\begin{align*}
  A_1 =& (|z|r)^\rho\cos(\rho(\varphi_1+\arg z))+\rho\mu_I (\varphi_1+\arg z)+ \rho(1-\mu_R)\ln(|z|r), \\
  B_1 =& (|z|r)^\rho\sin(\rho(\varphi_1+\arg z))+ \rho(1-\mu_R)(\varphi_1+\arg z)-\rho\mu_I\ln(|z|r).
\end{align*}
For the numerator we have the estimate
\vspace{12pt}
\begin{equation*}
  \left|e^{A_1+iB_1}(r-e^{i\varphi_1})\right|\leqslant \left|re^{A_1+iB_1}\right|+\left|e^{A_1+iB_1+i\varphi_1}\right|=e^{A_1}(r+1).
\end{equation*}
Consequently, we obtain
\begin{multline*}
  \frac{\rho}{2\pi}\lim_{r\to0}\frac{\left|e^{A_1+iB_1}(r-e^{i\varphi_1})\right|}{r^2-2r\cos\varphi_1+1}
  \leqslant \frac{\rho}{2\pi} \lim_{r\to0}\frac{e^{A_1}(r+1)}{r^2-2r\cos\varphi_1+1} \\
  =\frac{\rho}{2\pi}\lim_{r\to0}\frac{1}{r^2-2r\cos\varphi_1+1} \exp\left\{(|z|r)^\rho\cos(\rho(\varphi_1+\arg z))+\rho\mu_I (\varphi_1+\arg z)\right.\\
  \left.+ \rho(1-\mu_R)\ln(|z|r)+\ln(r+1)\right\}
  = \left\{\begin{array}{cc}
             0, & \mu_R<1, \\
             \frac{\rho}{2\pi}e^{\rho(\varphi_1+\arg z)\mu_I}, &\mu_R=1.
           \end{array}\right.
\end{multline*}

We get the similar result for the integral $I_{\Gamma_2}$
\begin{equation*}
  \lim_{\varepsilon\to0}\left|\frac{\phi_{\rho,\mu}(r e^{i(\delta_{2\rho}-\pi)},z)}{re^{i(\delta_{2\rho}-\pi)}-1} e^{i(\delta_{2\rho}-\pi)}\right|=\left\{\begin{array}{cc}
             0, & \mu_R<1, \\
             \frac{\rho}{2\pi}e^{\rho(\delta_{2\rho}-\pi+\arg z)\Im\mu}, &\mu_R=1.
           \end{array}\right.
\end{equation*}

Thus, at~$\Re\mu\leqslant1$ the limits $\lim_{\varepsilon\to0} I_{\Gamma_1}$ and $\lim_{\varepsilon\to0} I_{\Gamma_2}$ will converge to the corresponding definite integrals
\begin{equation}\label{eq:int_Ig1_Ig2}
\begin{array}{l}
  \displaystyle\lim_{\varepsilon\to0}I_{\Gamma_1}=\int\limits_{0}^{1+\epsilon}\frac{\phi_{\rho,\mu}(r e^{i(-\delta_{1\rho}-\pi)},z)}{re^{i(-\delta_{1\rho}-\pi)}-1} e^{i(-\delta_{1\rho}-\pi)}dr,\\
  \displaystyle\lim_{\varepsilon\to0}I_{\Gamma_2}=-\int\limits_{0}^{1+\epsilon}\frac{\phi_{\rho,\mu}(r e^{i(\delta_{2\rho}-\pi)},z)}{re^{i(\delta_{2\rho}-\pi)}-1} e^{i(\delta_{2\rho}-\pi)}dr.
  \end{array}
\end{equation}

We will pay attention to the fact that in the case under consideration when the parameters $\rho,\delta_{1\rho}, \delta_{2\rho}$  satisfy the condition (\ref{eq:deltaRhoCond_lem_proof}), the~integrand of the integral $I$ (see~(\ref{eq:intI_aux})) inside the region limited by the contour  $\Gamma$, is an analytical function. Consequently, from~(\ref{eq:intI_tmp1}) we have
\begin{equation*}
  I=I_{C_\epsilon}+I_{\Gamma_2}+I_{C_\varepsilon}+I_{\Gamma_1}=0
\end{equation*}
Letting $\varepsilon\to0$ in this expression and taking into consideration (\ref{eq:IntIC_varepsTo0}) we obtain
\begin{equation}\label{eq:IC_epsilon_lemm_int1_tmp0}
 I_{C_\epsilon}=-\lim_{\varepsilon\to0}(I_{\Gamma_1}+I_{\Gamma_2}).
\end{equation}
Substituting here the expression for  $I_{C_\epsilon}$ and using  (\ref{eq:int_Ig1_Ig2}) we obtain
\begin{multline}\label{eq:I_C_vareps}
  i\int\limits_{-\delta_{1\rho}-\pi}^{\delta_{2\rho}-\pi} \frac{\phi_{\rho,\mu}((1+\epsilon) e^{i\varphi},z)}{(1+\epsilon)e^{i\varphi}-1}(1+\epsilon)e^{i\varphi}d\varphi\\
  =-\int\limits_{0}^{1+\epsilon}\frac{\phi_{\rho,\mu}(r e^{i(-\delta_{1\rho}-\pi)},z)}{re^{i(-\delta_{1\rho}-\pi)}-1} e^{i(-\delta_{1\rho}-\pi)}dr
  +\int\limits_{0}^{1+\epsilon}\frac{\phi_{\rho,\mu}(r e^{i(\delta_{2\rho}-\pi)},z)}{re^{i(\delta_{2\rho}-\pi)}-1} e^{i(\delta_{2\rho}-\pi)}dr,\quad \mu_R<1+\frac{1}{\rho}.
\end{multline}
Here it should be pointed out that at values $1<\mu_R<1+1/\rho$ the integrals (\ref{eq:int_Ig1_Ig2}) have a singularity in the point $r=0$ and at values $\mu_R\leqslant1$ the singularity~disappears.

Now let us return to the Mittag-Leffler function. Directly calculating (\ref{eq:lemm_MLF_int1_tmp0}) and replacing the variable of integration $\zeta=re^{i\varphi}$ we get
\begin{multline*}
  E_{\rho,\mu}(z)  =\int_{S_1}\frac{\phi_{\rho,\mu}(\zeta,z)}{\zeta-1}d\zeta +
  \int_{C_\epsilon}\frac{\phi_{\rho,\mu}(\zeta,z)}{\zeta-1}d\zeta
  +\int_{S_2}\frac{\phi_{\rho,\mu}(\zeta,z)}{\zeta-1}d\zeta
  =\int\limits_{\infty}^{1+\epsilon} \left.\frac{\phi_{\rho,\mu}\left(re^{i\varphi},z\right)} {re^{i\varphi}-1}e^{i\varphi}dr\right|_{\varphi=-\delta_{1\rho}-\pi}\\
  + i\hspace{-3mm}\int\limits_{-\delta_{1\rho}-\pi}^{\delta_{2\rho}-\pi} \left.\frac{\phi_{\rho,\mu}\left(re^{i\varphi},z\right)} {re^{i\varphi}-1}re^{i\varphi}\varphi\right|_{r=1+\epsilon}
  + \int\limits_{1+\epsilon}^{\infty} \left.\frac{\phi_{\rho,\mu}\left(re^{i\varphi},z\right)} {re^{i\varphi}-1}e^{i\varphi}dr\right|_{\varphi=\delta_{2\rho}-\pi}
  =\int\limits_{\infty}^{1+\epsilon} \frac{\phi_{\rho,\mu}\left(re^{i(-\delta_{1\rho}-\pi)},z\right)} {re^{i(-\delta_{1\rho}-\pi)}-1}e^{i(-\delta_{1\rho}-\pi)}dr\\
  + i\int\limits_{-\delta_{1\rho}-\pi}^{\delta_{2\rho}-\pi} \frac{\phi_{\rho,\mu}\left((1+\epsilon)e^{i\varphi},z\right)} {(1+\epsilon)e^{i\varphi}-1}(1+\epsilon)e^{i\varphi}\varphi
  + \int\limits_{1+\epsilon}^{\infty} \frac{\phi_{\rho,\mu}\left(re^{i(\delta_{2\rho}-\pi)},z\right)} {re^{i(\delta_{2\rho}-\pi)}-1}e^{i(\delta_{2\rho}-\pi)}dr.
\end{multline*}
Using here (\ref{eq:I_C_vareps}) we obtain
\begin{equation}\label{eq:MLF_tmp4}
  E_{\rho,\mu}(z)= \int\limits_{0}^{\infty}\left( \frac{\phi_{\rho,\mu}\left(re^{i(\delta_{2\rho}-\pi)},z\right)} {re^{i(\delta_{2\rho}-\pi)}-1}e^{i(\delta_{2\rho}-\pi)}
  -\frac{\phi_{\rho,\mu}\left(re^{i(-\delta_{1\rho}-\pi)},z\right)} {re^{i(-\delta_{1\rho}-\pi)}-1}e^{i(-\delta_{1\rho}-\pi)}\right)dr,\quad \mu_R<1+\frac{1}{\rho}.
\end{equation}

We consider the first summand in this expression. Getting rid of the complexity in the denominator and using (\ref{eq:phiFun}) we get
\begin{multline}\label{eq:lemm_int1_1stTerm}
  \frac{\phi_{\rho,\mu}\left(re^{i(\delta_{2\rho}-\pi)},z\right)} {re^{i(\delta_{2\rho}-\pi)}-1}e^{i(\delta_{2\rho}-\pi)}=
  \frac{\phi_{\rho,\mu}\left(re^{i(\delta_{2\rho}-\pi)},z\right)e^{i(\delta_{2\rho}-\pi)} \left(re^{-i(\delta_{2\rho}-\pi)}-1\right)} {\left(re^{i(\delta_{2\rho}-\pi)}-1)\right)\left(re^{-i(\delta_{2\rho}-\pi)}-1\right)} \\
  =\frac{\phi_{\rho,\mu}\left(re^{i(\delta_{2\rho}-\pi)},z\right) \left(r-e^{i(\delta_{2\rho}-\pi)}\right)}{r^2+2r\cos\delta_{2\rho}+1)}
  =\frac{\rho}{2\pi i}\frac{\exp\left\{\left(zre^{i(\delta_{2\rho}-\pi)}\right)^\rho \right\}\left(zr e^{i(\delta_{2\rho}-\pi)}\right)^{\rho(1-\mu)}\left(r+e^{i\delta_{2\rho}}\right)}{r^2+2r\cos\delta_{2\rho}+1}\\
  =\frac{\rho}{2\pi i}\frac{\exp\left\{(zr)^\rho e^{-i\rho\pi}(\cos(\rho\delta_{2\rho})+i\sin(\rho\delta_{2\rho}))\right\}(zr)^{\rho(1-\mu)}e^{i\rho(1-\mu)(\delta_{2\rho}-\pi)} \left(r+e^{i\delta_{2\rho}}\right)}{r^2+2r\cos\delta_{2\rho}+1}\\
  =\frac{\rho}{2\pi i}\frac{\exp\left\{(zr)^\rho e^{-i\rho\pi}\cos(\rho\delta_{2\rho})\right\}(zr)^{\rho(1-\mu)} e^{-i\rho(1-\mu)\pi} e^{i\eta(r,\delta_{2\rho},z)} \left(r+e^{i\delta_{2\rho}}\right)}{r^2+2r\cos\delta_{2\rho}+1},
\end{multline}
where $\eta(r,\varphi,z)$ has the form (\ref{eq:eta_lemm_int0}). Similarly, for~the second summand in (\ref{eq:MLF_tmp4}) we have
\begin{multline}\label{eq:lemm_int1_2ndTerm}
  \frac{\phi_{\rho,\mu}\left(re^{i(-\delta_{2\rho}-\pi)},z\right)} {re^{i(-\delta_{2\rho}-\pi)}-1}e^{i(-\delta_{2\rho}-\pi)}=\\
  = \frac{\rho}{2\pi i}\frac{\exp\left\{(zr)^\rho e^{-i\rho\pi}\cos(\rho\delta_{1\rho})\right\}(zr)^{\rho(1-\mu)} e^{-i\rho(1-\mu)\pi} e^{i\eta(r,-\delta_{1\rho},z)} \left(r+e^{-i\delta_{1\rho}}\right)}{r^2+2r\cos\delta_{1\rho}+1}
\end{multline}
Now substituting (\ref{eq:lemm_int1_1stTerm}) and (\ref{eq:lemm_int1_2ndTerm}) in (\ref{eq:MLF_tmp4}) and making simple transformations we finally obtain
\begin{equation*}
  E_{\rho,\mu}(z)=\int_{0}^{\infty} K_{\rho,\mu}(r,-\delta_{1\rho},\delta_{2\rho},z) dr,
\end{equation*}
where $K_{\rho,\mu}(r,\varphi_1,\varphi_2,z)$ is defined by the expression (\ref{eq:K_lemm_int0}).  The~values $\delta_{1\rho}$ and $\delta_{2\rho}$ satisfy the conditions~(\ref{eq:deltaRhoCond_lem_proof}). The~condition for the value $\arg z$ did not change and passes from theorem~\ref{lemm:MLF_int} without any changes. Thus, the~first item of the theorem is~proved.

\emph{Case~2.} Now we consider the case when $\frac{1}{2}<\rho\leqslant1$, $\delta_{1\rho}=\pi$ and $\pi/(2\rho)<\delta_{2\rho}<\pi$.  As~it was mentioned earlier, in~this case the segment $\Gamma_1$ of the auxiliary contour $\Gamma$ (see Figure~\ref{fig:loops_aux}) will go through the singular point $\zeta=1$. That is why, it is necessary to change the contour of integration in such a way that one could  bypass this point  leaving it outside the contour. The~auxiliary contour $\Gamma'$   that we get  will consist of the arc of the circle $C_\epsilon$ with the center at the origin of coordinates and radius $1+\epsilon, \epsilon>0$ (see Figure~\ref{fig:loop_Gamma1_aux_case2}), the~segment $AB$, the~circle $C_\varepsilon$ with the center at the origin of coordinates and radius $\varepsilon$ satisfying the condition $0<\varepsilon<1$ ), the~segment $CD$, the~arc of the circle $C'_{1\varepsilon_1}$ with the center in the point $\zeta'=1$ ($\arg\zeta'=-2\pi$) and radius $\varepsilon_1$ (here $\varepsilon_1<1-\varepsilon$ and $\varepsilon_1<\epsilon$)  and the segment $EF$.

For further study we need to parametrize this contour. As~a result, in~the plane $\zeta$ the arc of the circle $C_\epsilon$ can be written in the form $C_\epsilon=\{\zeta:\ -2\pi\leqslant \arg\zeta \leqslant \delta_{2\rho}-\pi,\ |\zeta|=1+\epsilon\}$. The~segment $AB$ is written in the form $AB=\{\zeta:\ \arg\zeta=\delta_{2\rho}-\pi,\ \varepsilon\leqslant|\zeta|\leqslant 1+\epsilon\}$. The~arc of the circle  $C_\varepsilon$ has the form $C_\varepsilon=\{\zeta:\ \delta_{2\rho}-\pi\leqslant\arg\zeta\leqslant-2\pi,\ |\zeta|=\varepsilon\}$. The~segments $CD$ and $EF$ are written in the form $CD=\{\zeta:\ \arg\zeta=-2\pi,\ \varepsilon\leqslant|\zeta|\leqslant 1-\varepsilon_1\}$ and $EF=\{\zeta:\ \arg\zeta=-2\pi,\ 1+\varepsilon_1\leqslant|\zeta|\leqslant 1+\epsilon\}$.

\begin{figure}[H]
  \centering
  \includegraphics[width=0.4\textwidth]{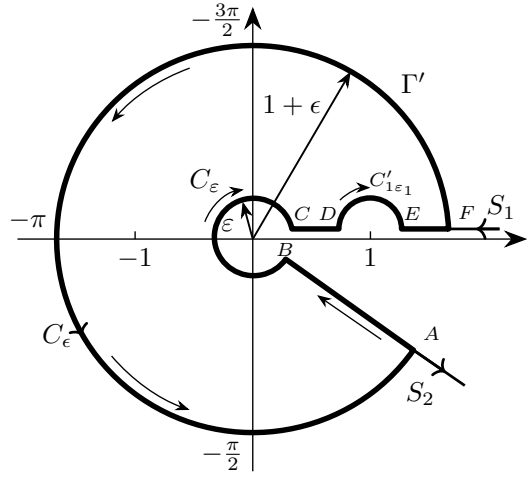}
  \caption{The auxiliary contour of integration $\Gamma'$.}\label{fig:loop_Gamma1_aux_case2}
\end{figure}

To parametrize the arc of the circle $C'_{1\varepsilon_1}$, we consider the mapping $u=\zeta-1$ of the complex plane $\zeta$ on the complex plane $u$. This mapping is a conformal mapping and is a left shift of the entire complex plane $\zeta$ by the value $1$. As~a result of such a shift, the point $\zeta=1$ in the plane $\zeta$ is mapped onto the point $u=0$ of the plane $u$. Thus, the~circle  $C'_{1\varepsilon_1}$ with the center in the point $\zeta=1$ and the radius $\varepsilon_1<1$ of the plane $\zeta$ will be mapped on the circle $C_{0\varepsilon_1}^u$ with the center in the point $u=0$ and the radius $\varepsilon_1$ on the plane $u$. Thus, to~parametrize the arc of the circle $C'_{1\varepsilon_1}$  in the plane $\zeta$ it is enough to parametrize the arc of the circle $C_{0\varepsilon_1}^u$ in the complex plane $u$ and then to map the complex plane $u$ on to the plane $\zeta$ with the help of an inverse conformal mapping $\zeta=u+1$.

We will represent the complex number $u$ in the form $u=\tau e^{i\psi}$. Then, an~arc of a circle $C_{0\varepsilon_1}^u$ in the complex plane  $u$ has the form $C_{0\varepsilon_1}^u=\{u:\ -2\pi\leqslant\psi\leqslant-\pi,\ \tau=\varepsilon_1\}$, where $\varepsilon_1\leqslant1$. Now we map the complex plane  $u$ onto the plane  $\zeta$. We have
$  \zeta=\tau e^{i\psi}+1= \tau\cos\psi+i\tau\sin\psi+1$. From~this we obtain
\begin{equation}\label{eq:maps_C1_vareps_tmp}
  |\zeta|=\sqrt{\tau^2+2\tau\cos\psi+1},\quad\arg\zeta=\arctan\left(\frac{\tau\sin\psi}{\tau\cos\psi+1}\right).
\end{equation}

Here it is necessary to note that $\arctan(x)$ is a multivalued function. The~principal branch of this function takes values in the interval $[-\pi/2,\pi/2]$. However, as~one can see from the definition of the auxiliary contour $\Gamma'$, the~center  of the arc of the circle $C'_{1\varepsilon_1}$ lies in the point $\zeta'=1$ in which $\arg\zeta'=-2\pi$. Therefore, it is necessary to choose the required branch in $\arctan(x)$ in such a way that the mapping (\ref{eq:maps_C1_vareps_tmp}) could map an arc of a circle $C_{0\varepsilon_1}^u$ of the plane $u$ on to an arc of a circle  $C'_{1\varepsilon_1}$ of the plane $\zeta$ with the center in the point $\zeta'=1$ and $\arg\zeta'=-2\pi$. As~a result, the~mapping (\ref{eq:maps_C1_vareps_tmp}) will take the form
\begin{equation}\label{eq:maps_C1_vareps}
  |\zeta|=\sqrt{\tau^2+2\tau\cos\psi+1},\quad\arg\zeta=\arctan\left(\frac{\tau\sin\psi}{\tau\cos\psi+1}\right) +k\pi.
\end{equation}
For the arc of the circle $C'_{1\varepsilon_1}$ we obtain $k=-2$. Here it should be pointed out that these formulas produce the mapping of the circle $C_{0\varepsilon_1}^u$ on to the circle $C'_{1\varepsilon_1}$ only in the case $\tau\leqslant1$. If~$\tau>1$, then these formulas will not be~true.

Thus, in~view of the remarks made the auxiliary contour $\Gamma'$ can be represented in the form
\begin{equation}\label{eq:loop_Gamma_aux_case2}
  \Gamma'=\left\{\begin{array}{l}
                  C_\epsilon=\{\zeta:\ -2\pi\leqslant \arg\zeta \leqslant \delta_{2\rho}-\pi,\ |\zeta|=1+\epsilon\}, \\
                  AB=\{\zeta:\ \arg\zeta=\delta_{2\rho}-\pi,\ \varepsilon\leqslant|\zeta|\leqslant 1+\epsilon\}, \\
                  C_\varepsilon=\{\zeta:\ \delta_{2\rho}-\pi\leqslant\arg\zeta\leqslant-2\pi,\ |\zeta|=\varepsilon\}, \\
                  CD=\{\zeta:\ \arg\zeta=-2\pi,\ \varepsilon\leqslant|\zeta|\leqslant 1-\varepsilon_1\},\\
                  C'_{1\varepsilon_1}=\left\{\zeta:\ \begin{array}{c}
                                                 \arg\zeta=\arctan\left(\frac{\tau\sin\psi}{\tau\cos\psi+1}\right) +k\pi,\\
                                                 |\zeta|=\sqrt{\tau^2+2\tau\cos\psi+1},\\
                                                 -\pi\geqslant\psi\geqslant-2\pi,\ \tau=\varepsilon_1,\ k=-2,
                                               \end{array}  \right\}\\
                  EF=\{\zeta:\ \arg\zeta=-2\pi,\ 1+\varepsilon_1\leqslant|\zeta|\leqslant 1+\epsilon\}.
                \end{array}\right.
\end{equation}
The contour is traversed in a positive~direction.

Next, we consider an auxiliary integral
\begin{equation*}
  I'=\int_{\Gamma'}\frac{\phi_{\rho,\mu}(\zeta,z)}{\zeta-1}d\zeta.
\end{equation*}
By calculating this integral we have
\begin{multline}\label{eq:I'_case2}
  I'= \int_{\Gamma'}\frac{\phi_{\rho,\mu}(\zeta,z)}{\zeta-1}d\zeta=\int_{C_\epsilon}\frac{\phi_{\rho,\mu}(\zeta,z)}{\zeta-1}d\zeta+
  \int_{AB}\frac{\phi_{\rho,\mu}(\zeta,z)}{\zeta-1}d\zeta+ \int_{C_\varepsilon}\frac{\phi_{\rho,\mu}(\zeta,z)}{\zeta-1}d\zeta +  \int_{CD}\frac{\phi_{\rho,\mu}(\zeta,z)}{\zeta-1}d\zeta + \\ \int_{C'_{1\varepsilon_1}}\frac{\phi_{\rho,\mu}(\zeta,z)}{\zeta-1}d\zeta + \int_{EF}\frac{\phi_{\rho,\mu}(\zeta,z)}{\zeta-1}d\zeta
  =I_{C_\epsilon}+I'_{AB}+I_{C_\varepsilon}+I'_{CD}+ I'_{C'_{1\varepsilon_1}}+I'_{EF}.
\end{multline}
One should pay attention to the fact that inside the region limited by the contour $\Gamma'$ the integrand $\phi_{\rho,\mu}(\zeta,z)/(\zeta-1)$ of the integral $I'$ is an analytical function of the complex variable $\zeta$. Consequently, according to the Cauchy integral theorem
\begin{equation*}
  I_{C_\epsilon}+I'_{AB}+I_{C_\varepsilon}+I'_{CD}+ I'_{C'_{1\varepsilon_1}}+I'_{EF}=0.
\end{equation*}
From here we get that
\begin{equation}\label{eq:lemm_MLF_int1_ICeps_summ}
  I_{C_\epsilon}=-I'_{AB}-I_{C_\varepsilon}-I'_{CD}- I'_{C'_{1\varepsilon_1}}-I'_{EF}.
\end{equation}
Thus, it is possible to substitute integration on the arc of the circle $C_\epsilon$ by integration on the remaining parts of the contour $\Gamma'$.

We consider each integral in the right part of this expression.  The~integral $I_{C_{\varepsilon}}$ has already been considered by us earlier in this lemma. Using (\ref{eq:IntIC_varepsTo0}) we obtain that in the case considered
\begin{equation}\label{eq:lemma6_I_Cvareps_case2}
  \lim_{\varepsilon\to0}I_{C_\varepsilon}=0,\quad \mu_R<1+1/\rho.
\end{equation}

Consider the integral $I'_{AB}$. Representing the complex number $\zeta$ in the form $\zeta=re^{i\varphi}$ and using~$(\ref{eq:loop_Gamma_aux_case2})$ we obtain
\begin{equation}\label{eq:I_AB_case2}
  I'_{AB}=\int_{AB}\frac{\phi_{\rho,\mu}(\zeta,z)}{\zeta-1}d\zeta=
  \int_{1+\epsilon}^{\varepsilon} \left.\frac{\phi_{\rho,\mu}\left(re^{i\varphi},z\right)}{re^{i\varphi}-1} e^{i\varphi} dr\right|_{\varphi=\delta_{2\rho}-\pi}
  = \int_{1+\epsilon}^{\varepsilon} \frac{\phi_{\rho,\mu}\left(re^{i(\delta_{2\rho}-\pi)},z\right)}{re^{i(\delta_{2\rho}-\pi)}-1}e^{i(\delta_{2\rho}-\pi)}dr.
\end{equation}
We introduce the notation
\begin{equation}\label{eq:lemm_MLF_int1_K'_def}
  \frac{\phi_{\rho,\mu}\left(re^{i(\delta-\pi)},z\right)}{re^{i(\delta-\pi)}-1}e^{i(\delta-\pi)}=K^\prime_{\rho,\mu}(r,\delta,z).
\end{equation}
Now we transform the function $K^\prime_{\rho,\mu}(r,\delta,z)$. Using the definition for $\phi_{\rho,\mu}(\zeta,z)$ (see~(\ref{eq:phiFun})) and getting rid of the complexity in the denominator we obtain
\begin{multline*}
  K^\prime_{\rho,\mu}(r,\delta,z)=
  \frac{\rho}{2\pi i} \frac{\exp\left\{ \left(zr e^{i(\delta-\pi)}\right)^\rho\right\} \left(zr e^{i(\delta-\pi)}\right)^{\rho(1-\mu)} e^{i(\delta-\pi)} \left(re^{-i(\delta-\pi)}-1\right)} {\left(re^{i(\delta-\pi)}-1\right) \left(re^{-i(\delta-\pi)}-1\right)}=\\
  \frac{\rho}{2\pi i}\frac{\exp\left\{ (zr)^\rho e^{-i\rho\pi}[\cos(\rho\delta)+i\sin(\rho\delta)]\right\} \left( zr e^{-i\pi}\right)^{\rho(1-\mu)} e^{i\rho(1-\mu)\delta}\left(r-e^{i(\delta-\pi)}\right)}{r^2-2r\cos(\delta-\pi)+1}=\\
  \frac{\rho}{2\pi i} \frac{\exp\left\{(zr)^\rho e^{-i\rho\pi}\cos(\rho\delta)\right\} \left(zr e^{-i\pi}\right)^{\rho(1-\mu)} }{r^2+2r\cos\delta+1}
  \exp\left\{i\left[(zr)^\rho e^{-i\pi\rho}\sin(\rho\delta)+\rho(1-\mu)\delta\right]\right\} \left(r+e^{i\delta}\right)
\end{multline*}
By introducing the notation $\eta(r,\varphi,z)=(zr)^\rho e^{-i\pi\rho}\sin(\rho\delta)+\rho(1-\mu)\delta$,
that coincides with (\ref{eq:eta_lemm_int0}) we obtain that $K_{\rho,\mu}^\prime(r,\delta,z)$ can be represented in the form
\begin{equation}\label{eq:K'_case2}
  K^\prime_{\rho,\mu}(r,\delta,z)= \frac{\rho}{2\pi i} \frac{\exp\left\{(zr)^\rho e^{-i\rho\pi}\cos(\rho\delta)\right\} \left(zr e^{-i\pi}\right)^{\rho(1-\mu)} e^{i\eta(r,\varphi,z)} \left(r+e^{i\delta}\right)}{r^2+2r\cos\delta+1}.
\end{equation}
Thus, for~the integral $I'_{AB}$ we get
\begin{equation}\label{eq:lemma6_I_AB_case2}
  I'_{AB}=\int_{1+\epsilon}^{\varepsilon}K_{\rho,\mu}^\prime(r,\delta_{2\rho},z)dr.
\end{equation}

Now consider the integral $I'_{CD}$. Representing the complex number $\zeta$ in the form $\zeta=re^{i\varphi}$ and using (\ref{eq:loop_Gamma_aux_case2}) we get
\begin{equation*}
  I'_{CD}=\int_{CD}\frac{\phi_{\rho,\mu}(\zeta,z)}{\zeta-1}d\zeta=
  \int_{\varepsilon}^{1-\varepsilon_1} \left.\frac{\phi_{\rho,\mu}\left(re^{i\varphi},z\right)}{re^{i\varphi}-1} e^{i\varphi} dr\right|_{\varphi=-\pi-\pi}
  =\int_{\varepsilon}^{1-\varepsilon_1} \frac{\phi_{\rho,\mu}\left(re^{i(-\pi-\pi)},z\right)}{re^{i(-\pi-\pi)}-1}e^{i(-\pi-\pi)}dr.
\end{equation*}
Comparing this expression with (\ref{eq:I_AB_case2}), we notice that the integrand of the integral obtained  is similar to the integrand  of the integral (\ref{eq:I_AB_case2}). Therefore, using (\ref{eq:lemm_MLF_int1_K'_def}) we get that the integrand can be represented in the form (\ref{eq:K'_case2}). Thus, for~the integral $I'_{CD}$ we obtain
\begin{equation}\label{eq:lemma6_I_CD_case2}
  I'_{CD}=\int_{\varepsilon}^{1-\varepsilon_1} K_{\rho,\mu}^\prime(r,-\pi,z)dr.
\end{equation}

Similarly, using (\ref{eq:loop_Gamma_aux_case2}) for the integral $I'_{EF}$ we obtain
\begin{equation*}
  I'_{EF}=\int_{EF}\frac{\phi_{\rho,\mu}(\zeta,z)}{\zeta-1}d\zeta=
  \int_{1+\varepsilon_1}^{1+\epsilon} \left.\frac{\phi_{\rho,\mu}\left(re^{i\varphi},z\right)}{re^{i\varphi}-1} e^{i\varphi} dr\right|_{\varphi=-\pi-\pi}
  =\int_{1+\varepsilon_1}^{1+\epsilon} \frac{\phi_{\rho,\mu}\left(re^{i(-\pi-\pi)},z\right)}{re^{i(-\pi-\pi)}-1}e^{i(-\pi-\pi)}dr.
\end{equation*}
Using now (\ref{eq:lemm_MLF_int1_K'_def}) and (\ref{eq:K'_case2}) the integral  $I'_{EF}$ takes the form
\begin{equation}\label{eq:lemma6_I_EF_case2}
  I'_{EF}=\int_{1+\varepsilon_1}^{1+\epsilon}  K_{\rho,\mu}^\prime(r,-\pi,z)dr.
\end{equation}

In the sum (\ref{eq:I'_case2}) it remains to consider the integral $I'_{C'_{1\varepsilon_1}}$. This integral is taken along the arc of the circle $C'_{1\varepsilon_1}$ with the center in the point $\zeta=1$ (at that $\arg\zeta=-2\pi$) and radius $\varepsilon_1$.   The~contour is traversed in the direction from the point  $D$ to the point $E$ (see Figure~\ref{fig:loop_Gamma1_aux_case2}). As~it was shown earlier, the arc of the  circle $C'_{1\varepsilon_1}$ in the plane $\zeta$ can be given in the form (see~(\ref{eq:loop_Gamma_aux_case2}))
\vspace{12pt}
\begin{equation}\label{eq:lemm_MLF_int1_C1vareps1_case2}
  C'_{1\varepsilon_1}=\left\{\zeta:\
  \begin{array}{c}
  \arg\zeta=\arctan\left(\frac{\tau\sin\psi}{\tau\cos\psi+1}\right) +k\pi,
  |\zeta|=\sqrt{\tau^2+2\tau\cos\psi+1},\\
   -\pi\geqslant\psi\geqslant-2\pi,\ \tau=\varepsilon_1,\ k=-2
   \end{array} \right\}
\end{equation}
As one can see, the~arc traverse in the direction from the point $D$ to the point $E$ corresponds to a change of the parameter $\psi$ from $-\pi$ to $-2\pi$. Thus, in~the case under consideration the contour $C'_{1\varepsilon_1}$ on the complex plane $\zeta$ can be represented in the form
\begin{equation}\label{eq:zeta_I_C1vareps}
\zeta=r(\varepsilon_1,\psi) e^{i\varphi(\varepsilon_1,\psi,-2)},
\end{equation}
where $-\pi\geqslant\psi\geqslant-2\pi$, $0<\varepsilon_1<1$ and
\begin{equation}\label{eq:lemm_MLF_r_varphi_C1vareps1}
\begin{array}{l}
  r(\tau,\psi)=\sqrt{\tau^2+2\tau\cos\psi+1}, \\ \varphi(\tau,\psi,k)=\arctan\left(\frac{\tau\sin\psi}{\tau\cos\psi+1}\right) +k\pi
  \end{array}
\end{equation}

Representing now in the integral $I'_{C'_{1\varepsilon_1}}$ the complex number $\zeta$ in the form (\ref{eq:zeta_I_C1vareps}) we find
\begin{equation}\label{eq:I_C1vareps1_case2_tmp0}
  I'_{C'_{1\varepsilon_1}}=\int_{C'_{1\varepsilon_1}}\frac{\phi_{\rho,\mu}(\zeta,z)}{\zeta-1}d\zeta=
  -i\varepsilon_1\int_{-2\pi}^{-\pi}\frac{\phi_{\rho,\mu}\left(r(\varepsilon_1,\psi) e^{i\varphi(\varepsilon_1,\psi,-2)},z\right)} {r(\varepsilon_1,\psi) e^{i\varphi(\varepsilon_1,\psi,-2)}-1} e^{i\psi}d\psi.
\end{equation}
One should pay attention that traversing along the arc of the circle $C'_{1\varepsilon_1}$ in the direction from the point $D$ to the point $E$ (see~Figure~\ref{fig:loop_Gamma1_aux_case2}) corresponds to the negative  traversing direction along the contour. Here the value of the parameter $\psi=-\pi$ corresponds to the point $D$ and the value of the parameter $\psi=-2\pi$ to the point $E$. That is why in the integral derived one must transpose the limits of~integration.

We now consider the integrand of this integral. We introduce the following notation for it
\begin{equation}\label{eq:lemm_MLF_int1_P'_case2}
  i\tau\frac{\phi_{\rho,\mu}\left(r(\tau,\psi) e^{i\varphi(\tau,\psi,k)},z\right)} {r(\tau,\psi) e^{i\varphi(\tau,\psi,k)}-1} e^{i\psi}=P_{\rho,\mu}^\prime(\tau,\psi,k,z).
\end{equation}
We transform this expression using (\ref{eq:phiFun}) and getting rid of the complexity in the denominator we~get
\begin{multline}\label{eq:lemm_MLF_int1_P'_def_case}
  P_{\rho,\mu}^\prime(\tau,\psi,k,z)=\frac{\rho\tau}{2\pi} \frac{\exp\left\{\left(z r(\tau,\psi) e^{i\varphi(\tau,\psi,k)}\right)^\rho\right\}\left(zr(\tau,\psi) e^{i\varphi(\tau,\psi,k)}\right)^{\rho(1-\mu)}e^{i\psi}}{r(\tau,\psi) e^{i\varphi(\tau,\psi,k)}-1}\\
  =\frac{\rho\tau}{2\pi}\frac{\exp\left\{\left(z r(\tau,\psi)\right)^\rho(\cos(\rho\varphi(\tau,\psi,k))+ i\sin(\rho\varphi(\tau,\psi,k)))\right\} }{\left(r(\tau,\psi)e^{i\varphi(\tau,\psi,k)}-1\right) \left(r(\tau,\psi)e^{-i\varphi(\tau,\psi,k)}-1\right)}\times\\\times \left(z r(\tau,\psi)\right)^{\rho(1-\mu)} e^{i\rho(1-\mu)\varphi(\tau,\psi,k)} \left(r(\tau,\psi)e^{-i\varphi(\tau,\psi,k)}-1\right) e^{i\psi}\\
  =\frac{\rho\tau}{2\pi} \frac{\exp\left\{(zr(\tau,\psi))^\rho\cos(\rho\varphi(\tau,\psi,k))\right\} (zr(\tau,\varphi))^{\rho(1-\mu)}}{(r(\tau,\psi))^2-2r(\tau,\psi)\cos\varphi(\tau,\psi,k)+1}\times\\
  \times \exp\left\{i[(zr(\tau,\psi))^\rho\sin(\rho\varphi(\tau,\psi,k))+ \rho(1-\mu)\varphi(\tau,\psi,k)]\right\}\left(r(\tau,\psi)e^{-i\varphi(\tau,\psi,k)}-1\right) e^{i\psi}\\
  =\frac{\rho\tau}{2\pi} \frac{\exp\left\{(zr(\tau,\psi))^\rho\cos(\rho\varphi(\tau,\psi,k))\right\}(zr(\tau,\varphi))^{\rho(1-\mu)}} {(r(\tau,\psi))^2-2r(\tau,\psi)\cos\varphi(\tau,\psi,k)+1}
   e^{i[\chi'(\tau,\psi,k,z)+\psi]} \left(r(\tau,\psi)e^{-i\varphi(\tau,\psi,k)}-1\right),
\end{multline}
where
\begin{equation*}
  \chi'(\tau,\psi,k,z)=(zr(\tau,\psi))^\rho\sin(\rho\varphi(\tau,\psi,k))+ \rho(1-\mu)\varphi(\tau,\psi,k).
\end{equation*}
As a result the integral (\ref{eq:I_C1vareps1_case2_tmp0}) takes the form
\begin{equation}\label{eq:lemma6_I_C1_vareps1_case2}
  I'_{C'_{1\varepsilon_1}}=-\int_{-2\pi}^{-\pi}P_{\rho,\mu}^\prime(\varepsilon_1,\psi,-2,z)d\psi.
\end{equation}

Now we get back to the expression (\ref{eq:lemm_MLF_int1_ICeps_summ}) and let $\varepsilon\to0$ in this expression. It is necessary to point out that the integrals $I_{C_\epsilon}$, $I'_{C'_{1\varepsilon_1}}$ and $I'_{EF}$ do not depend on  $\varepsilon$ and, consequently, they will not change  with such a passage to the limit. In~view of this, we have
\begin{equation*}
  I_{C_\epsilon}=-\lim_{\varepsilon\to0} I'_{AB}- \lim_{\varepsilon\to0}I_{C_\varepsilon}-\lim_{\varepsilon\to0}I'_{CD}- I'_{C'_{1\varepsilon_1}}-I'_{EF}.
\end{equation*}
Taking into account (\ref{eq:lemma6_I_Cvareps_case2}) and using the expressions (\ref{eq:lemma6_I_AB_case2}), (\ref{eq:lemma6_I_CD_case2}), (\ref{eq:lemma6_I_EF_case2}) and (\ref{eq:lemma6_I_C1_vareps1_case2}) we get
\begin{multline*}
  I_{C_\epsilon}=-\lim_{\varepsilon\to0}\int_{1+\epsilon}^{\varepsilon}K_{\rho,\mu}^\prime(r,\delta_{2\rho},z)dr-
  \lim_{\varepsilon\to0}\int_{\varepsilon}^{1-\varepsilon_1} K_{\rho,\mu}^\prime(r,-\pi,z)dr\\
  +\int_{-2\pi}^{-\pi}P_{\rho,\mu}^\prime(\varepsilon_1,\psi,-2,z)d\psi-
  \int_{1+\varepsilon_1}^{1+\epsilon}  K_{\rho,\mu}^\prime(r,-\pi,z)dr,\quad \Re\mu<1+1/\rho.
\end{multline*}
From this we obtain
\begin{multline}\label{eq:lemm_MLF_int1_I_Ceps_case2}
  I_{C_\epsilon}=-\int_{1+\epsilon}^{0}K_{\rho,\mu}^\prime(r,\delta_{2\rho},z)dr-
  \int_{0}^{1-\varepsilon_1} K_{\rho,\mu}^\prime(r,-\pi,z)dr\\
  +\int_{-2\pi}^{-\pi}P_{\rho,\mu}^\prime(\varepsilon_1,\psi,-2,z)d\psi-
  \int_{1+\varepsilon_1}^{1+\epsilon}  K_{\rho,\mu}^\prime(r,-\pi,z)dr,\quad \Re\mu<1+1/\rho.
\end{multline}

Now we get back to the Mittag-Leffler function.  Assuming that $\delta_{1\rho}=\pi$ the contour of integration $\gamma_\zeta$ in (\ref{eq:lemm_MLF_int1_tmp0}) takes the form
\begin{equation*}
  \gamma_\zeta=\left\{\begin{array}{l}
                  S_1=\{\zeta:\ \arg\zeta=-2\pi,\ |\zeta|\geqslant 1+\epsilon\}\\
                  C_\epsilon=\{\zeta:\ -2\pi\leqslant \arg\zeta \leqslant \delta_{2\rho}-\pi,\ |\zeta|=1+\epsilon\} \\
                  S_2=\{\zeta:\ \arg\zeta=\delta_{2\rho}-\pi,\ |\zeta|\geqslant 1+\epsilon\}
                \end{array}\right.
\end{equation*}
By calculating (\ref{eq:lemm_MLF_int1_tmp0}) directly and representing a complex number $\zeta$ in the form $\zeta=re^{i\varphi}$ we obtain
\begin{multline*}
  E_{\rho,\mu}(z)=\int_{S_1}\frac{\phi_{\rho,\mu}(\zeta,z)}{\zeta-1}d\zeta +
  \int_{C_{\epsilon}} \frac{\phi_{\rho,\mu}(\zeta,z)}{\zeta-1}d\zeta + \int_{S_1}\frac{\phi_{\rho,\mu}(\zeta,z)}{\zeta-1}d\zeta\\
  = \int_{\infty}^{1+\epsilon}\left.\frac{\phi_{\rho,\mu}\left(re^{i\varphi},z\right)}{re^{i\varphi}-1} e^{i\varphi}dr\right|_{\varphi=-\pi-\pi} + I_{C_\epsilon}
  +\int_{1+\epsilon}^{\infty}\left.\frac{\phi_{\rho,\mu}\left(re^{i\varphi},z\right)}{re^{i\varphi}-1}e^{i\varphi}dr \right|_{\varphi=\delta_{2\rho}-\pi}\\
  =\int_{\infty}^{1+\epsilon}\frac{\phi_{\rho,\mu}\left(re^{i(-\pi-\pi)},z\right)}{re^{i(-\pi-\pi)}-1} e^{i(-\pi-\pi)}dr + I_{C_\epsilon}
  +\int_{1+\epsilon}^{\infty}\frac{\phi_{\rho,\mu}\left(re^{i(\delta_{2\rho}-\pi)},z\right)}{re^{i(\delta_{2\rho}-\pi)}-1} e^{i(\delta_{2\rho}-\pi)}dr
\end{multline*}
using the notation (\ref{eq:lemm_MLF_int1_K'_def}), this expression can be written in the form
\begin{equation*}
  E_{\rho,\mu}(z)= \int_{\infty}^{1+\epsilon} K_{\rho,\mu}'(r,-\pi,z)dr+I_{C_\epsilon}+ \int_{1+\epsilon}^{\infty} K_{\rho,\mu}'(r,\delta_{2\rho},z)dr,
\end{equation*}
where the form of the functions $K_{\rho,\mu}'(r,\delta,z)$ is defined by (\ref{eq:K'_case2}). Now we will make use of the representation here (\ref{eq:lemm_MLF_int1_I_Ceps_case2}) for the integral $I_{C_\epsilon}$. As~a result, we obtain
\vspace{12pt}
\begin{multline*}
  E_{\rho,\mu}(z)= \int_{\infty}^{1+\epsilon} K_{\rho,\mu}'(r,-\pi,z)dr -\int_{1+\epsilon}^{0}K_{\rho,\mu}^\prime(r,\delta_{2\rho},z)dr
  -\int_{0}^{1-\varepsilon_1} K_{\rho,\mu}^\prime(r,-\pi,z)dr\\
  +\int_{-2\pi}^{-\pi}P_{\rho,\mu}^\prime(\varepsilon_1,\psi,-2,z)d\psi
  -\int_{1+\varepsilon_1}^{1+\epsilon}  K_{\rho,\mu}^\prime(r,-\pi,z)dr
  +\int_{1+\epsilon}^{\infty} K_{\rho,\mu}'(r,\delta_{2\rho},z)dr\\
  = \int_{0}^{\infty} K_{\rho,\mu}'(r,\delta_{2\rho},z)dr
  -\int_{0}^{1-\varepsilon_1} K_{\rho,\mu}^\prime(r,-\pi,z)dr
  +\int_{-2\pi}^{-\pi}P_{\rho,\mu}^\prime(\varepsilon_1,\psi,-2,z)d\psi
  -\int_{1+\varepsilon_1}^{\infty}  K_{\rho,\mu}^\prime(r,-\pi,z)dr,
\end{multline*}
where $\Re\mu<1+1/\rho$. It remains to consider how to change the condition (\ref{eq:z_cond_lemm}) in this case. Since, in~the case considered $\frac{1}{2}<\rho\leqslant1, \delta_{1\rho}=\pi, \pi/(2\rho)<\delta_{2\rho}\leqslant\pi$, then the condition (\ref{eq:z_cond_lemm}) takes the form $\frac{\pi}{2\rho}-\delta_{2\rho}+\pi<\arg z <-\frac{\pi}{2\rho}+2\pi$.  Thus, we have obtained the statement of the theorem  for the second~case.

\emph{Case 3.} Consider now the case $1/2<\rho\leqslant1$, $\pi/(2\rho)<\delta_{1\rho}\leqslant\pi$, $\delta_{2\rho}=\pi$.  In~this case the segment $\Gamma_2$ of the auxiliary contour $\Gamma$ (see~Figure~\ref{fig:loops_aux}) will run along the positive part of a real axis  and, thus, will run through a singular point $\zeta''=1$. It should be pointed out that in this case an argument of points of the segment is equal to $\arg\zeta=0$. Consequently, in~this case $\arg\zeta''=0$. That is why, as~in the previous case, we will change the contour $\Gamma$ in such a way that the contour bypasses this singular point leaving it outside the contour. The~contour formed $\Gamma''$ (Figure~\ref{fig:loop_Gamma''_aux_case3}) consists of the arc of the circle  $C_\epsilon=\{\zeta:\ -\delta_{1\rho}-\pi\leqslant\arg\zeta\leqslant0,\ |\zeta|=1+\epsilon\}$ with the center in the point $\zeta=0$  and radius  $1+\epsilon, \epsilon>0$, the~segment $AB=\{\zeta:\ \arg\zeta=0, 1+\epsilon\geqslant|\zeta|\geqslant 1+\varepsilon_1\}$, the~arc of the circle $C''_{1\varepsilon_1}$ with the center in the point $\zeta''=1,$ ($\arg\zeta''=0$) and radius $\varepsilon_1, (0<\varepsilon_1<1, \varepsilon_1<\epsilon)$, the~segment $CD=\{\zeta:\ \arg\zeta=0, 1-\varepsilon_1\geqslant|\zeta|\geqslant \varepsilon\}$, the~arc of the circle $C_\varepsilon=\{\zeta:\ 0\geqslant\arg\zeta\geqslant-\delta_{1\rho}-\pi,\ |\zeta|=\varepsilon\}$ with the center at the origin of coordinates and radius $\varepsilon>0$ and $\varepsilon<1-\varepsilon_1$ and the segment $EF=\{\zeta:\ \arg\zeta=-\delta_{1\rho}-\pi,\ \varepsilon\leqslant|\zeta|\leqslant 1+\epsilon\}$.

\begin{figure}[H]
  \centering
  \includegraphics[width=0.4\textwidth]{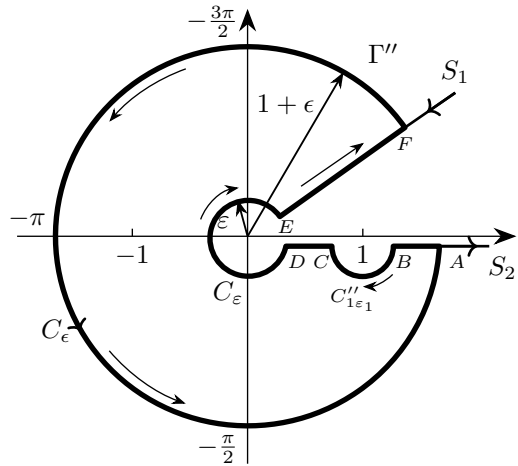}
  \caption{The auxiliary contour of integration $\Gamma''$.}\label{fig:loop_Gamma''_aux_case3}
\end{figure}

As one can see, only the arc of the circle $C''_{1\varepsilon_1}$ remains non-parametric. For~its parametrization we will fulfill the procedure as in the previous case. Considering the conformal mapping  $u=\zeta-1$ and representing the complex number $u$ in the form $u=\tau e^{i\psi}$ we get that the arc of the circle $C''_{1\varepsilon_1}$ can be represented in the form (\ref{eq:maps_C1_vareps}), where the parameter $\psi$ varies within the  limits from $-\pi$ to 0. Note that the value of the parameter $\psi=-\pi$ corresponds to the point $C$ and the value $\psi=0$ to the point $B$. It~should be pointed out that the center of a circle lies in the point $\zeta''=1$ with $\arg\zeta''=0$.   Consequently, in~the formulas (\ref{eq:maps_C1_vareps}) one should choose the principal branch $\arctan(x)$, i.e.,~to take $k=0$. As~a result, an~arc of a circle $C''_{1\varepsilon_1}$ can be represented in the form
\begin{equation}\label{eq:lemm_MLF_int1_C1vareps1_case3}
  C''_{1\varepsilon_1}=\left\{\zeta:\
  \begin{array}{c}
  \arg\zeta=\arctan\left(\frac{\tau\sin\psi}{\tau\cos\psi+1}\right) +k\pi,
  |\zeta|=\sqrt{\tau^2+2\tau\cos\psi+1},\\
  0\geqslant\psi\geqslant-\pi,\ \tau=\varepsilon_1,\ k=0.
  \end{array} \right\}
\end{equation}
In view of the foregoing, the~contour $\Gamma''$  is written in the form
\begin{equation}\label{eq:lemm_MLF_int1_Gamma''def}
  \Gamma''=\left\{\begin{array}{l}
                    C_\epsilon=\{\zeta:\ -\delta_{1\rho}-\pi\leqslant\arg\zeta\leqslant0,\ |\zeta|=1+\epsilon\},\\
                    AB=\{\zeta:\ \arg\zeta=0, 1+\epsilon\geqslant|\zeta|\geqslant 1+\varepsilon_1\},\\
                    C''_{1\varepsilon_1}=\left\{\zeta:\ \begin{array}{c}
                    \arg\zeta=\arctan\left(\frac{\tau\sin\psi}{\tau\cos\psi-1}+k\pi\right),\\
                    |\zeta|=\sqrt{\tau^2+2\tau\cos\psi+1},\\
                    0\geqslant\arg\zeta\geqslant-\pi, \tau=\varepsilon_1,k=0\end{array}\right\},\\
                    CD=\{\zeta:\ \arg\zeta=0, 1-\varepsilon_1\geqslant|\zeta|\geqslant \varepsilon\},\\
                    C_\varepsilon=\{\zeta:\ 0\geqslant\arg\zeta\geqslant-\delta_{1\rho}-\pi,\ |\zeta|=\varepsilon\},\\
                    EF=\{\zeta:\ \arg\zeta=-\delta_{1\rho}-\pi,\ \varepsilon\leqslant|\zeta|\leqslant 1+\epsilon\}.\\
                  \end{array}\right.
\end{equation}
The contour is traversed in a positive~direction.

We consider now an auxiliary integral
\begin{equation*}
  I''=\int_{\Gamma''}\frac{\phi_{\rho,\mu}(\zeta,z)}{\zeta-1}d\zeta.
\end{equation*}
In a similar way to how it was done in the previous case, we calculate this integral. Using the definition of the contour $\Gamma''$ we have
\begin{multline*}
  I''=\int_{C_\epsilon}\frac{\phi_{\rho,\mu}(\zeta,z)}{\zeta-1}d\zeta
  +\int_{AB}\frac{\phi_{\rho,\mu}(\zeta,z)}{\zeta-1}d\zeta +\int_{C''_{1\varepsilon_1}}\frac{\phi_{1,\mu}(\zeta,z)}{\zeta-1}d\zeta
  +\int_{CD}\frac{\phi_{\rho,\mu}(\zeta,z)}{\zeta-1}d\zeta \\
  +\int_{C_\varepsilon}\frac{\phi_{\rho,\mu}(\zeta,z)}{\zeta-1}d\zeta
  +\int_{EF}\frac{\phi_{\rho,\mu}(\zeta,z)}{\zeta-1}d\zeta
  = I_{C_\epsilon}+I''_{AB} + I''_{C''_{1\varepsilon_1}}+ I''_{CD} +I_{C_\varepsilon}+I''_{EF}.
\end{multline*}
It is clear from this expression that inside the region limited by the contour $\Gamma''$ the integrand of the integral  $I''$ is an analytical and continuous function of a variable $\zeta$. Consequently, according to the Cauchy theorem
\begin{equation*}
  I_{C_\epsilon}+I''_{AB} + I''_{C''_{1\varepsilon_1}}+ I''_{CD} +I_{C_\varepsilon}+I''_{EF}=0.
\end{equation*}
From this we obtain that
\begin{equation}\label{eq:lemm_MLF_int1_ICeps_summ_case3}
  I_{C_\epsilon}=-I''_{AB} - I''_{C''_{1\varepsilon_1}}- I''_{CD} -I_{C_\varepsilon}-I''_{EF}.
\end{equation}
We consider now each of the integrals in the right part~separately.

The integral $I_{C_\varepsilon}$ was considered by us earlier. Using (\ref{eq:IntIC_varepsTo0}), we obtain
\begin{equation}\label{eq:lemm_MLF_int1_I_Cvareps_case3}
  \lim_{\varepsilon\to0}I_{C_\varepsilon}=0,\quad \Re\mu<1+1/\rho.
\end{equation}

Now consider the integral $I''_{AB}$. Representing $\zeta$ in the form $\zeta=re^{i\varphi}$ we find
\begin{equation*}
  I''_{AB}=\int_{1+\epsilon}^{1+\varepsilon_1}\left.\frac{\phi_{\rho,\mu}(re^{i\varphi},z)}{re^{i\varphi}-1}e^{i\varphi} dr\right|_{\pi-\pi}= \int_{1+\epsilon}^{1+\varepsilon_1}\frac{\phi_{\rho,\mu}(re^{i(\pi-\pi)},z)}{re^{i(\pi-\pi)}-1}e^{i(\pi-\pi)} dr.
\end{equation*}
From this it is clear that this integral is similar to the integral (\ref{eq:I_AB_case2}).  That is why one can use the study results of this integral. Using (\ref{eq:K'_case2}), we obtain
\begin{equation}\label{eq:lemm_MLF_int1_I_AB_case3}
  I''_{AB}=\int_{1+\epsilon}^{1+\varepsilon_1} K'_{\rho,\mu}(r,\pi,z)dr.
\end{equation}

In the same way, using (\ref{eq:lemm_MLF_int1_Gamma''def}), (\ref{eq:lemm_MLF_int1_K'_def})  and  (\ref{eq:K'_case2}) for the integrals $I''_{CD}$ and $I''_{EF}$  we find
\begin{equation}\label{eq:lemm_MLF_int1_I_CD_I_EF_case3}
  I''_{CD}=\int_{1-\varepsilon_1}^{\varepsilon}K'_{\rho,\mu}(r,\pi,z)dr,\quad
I''_{EF}=\int_{\varepsilon}^{1+\epsilon} K'_{\rho,\mu}(r,\pi,z)dr.
\end{equation}

It remains to consider the integral $I_{C''_{1\varepsilon_1}}$. This integral is taken along the arc of the  $C''_{1\varepsilon_1}$ with the center in the point $\zeta''=1$, where  $\arg\zeta''=0$. This means that  the contour is traversed from the point $B$ of the complex plane  $\zeta$ in which $\arg\zeta_B=0$  to the point $C$ in which $\arg\zeta_C=0$. In~other words, the~starting and ending points of this contour have the same value of the argument. As~it was shown above, the~arc of a circle $C''_{1\varepsilon_1}$ can be represented in the form (\ref{eq:lemm_MLF_int1_C1vareps1_case3}).  Traversing this arc in the direction from the point $B$ to the point  $C$ corresponds to a change of the parameter  $\psi$ from 0 to $-\pi$. Thus, the~points of the arc of the circle $C''_{1\varepsilon_1}$ can be represented in the form
\begin{equation}\label{eq:lemm_MLF_int1_zeta_case3_C1}
  \zeta=r(\varepsilon_1,\psi)e^{i\varphi(\varepsilon_1,\psi,0)},
\end{equation}
where $r(\tau,\psi)$ and $\varphi(\tau,\psi,k)$ have the form (\ref{eq:lemm_MLF_r_varphi_C1vareps1}).

Using now the representation (\ref{eq:lemm_MLF_int1_zeta_case3_C1}), in~the integral $I_{C''_{1\varepsilon_1}}$ we obtain
\begin{equation*}
  I''_{C''_{1\varepsilon_1}}=\int_{0}^{-\pi}\left.\frac{\phi_{\rho,\mu}\left(r(\tau,\psi)e^{i\varphi(\tau,\psi,0)},z\right)i\tau e^{i\psi}} {r(\tau,\psi) e^{i\varphi(\tau,\psi,0)}-1}d\psi\right|_{\tau=\varepsilon_1}
  = -i\varepsilon_1 \int_{-\pi}^{0} \frac{\phi_{\rho,\mu}\left(r(\varepsilon_1,\psi)e^{i\varphi(\varepsilon_1,\psi,0)},z\right)e^{i\psi}} {r(\tau,\psi) e^{i\varphi(\tau,\psi,0)}-1}d\psi.
\end{equation*}
Comparing this expression with (\ref{eq:I_C1vareps1_case2_tmp0}) it is clear that these two integrals are similar. Therefore, using (\ref{eq:lemm_MLF_int1_P'_case2}) and (\ref{eq:lemm_MLF_int1_P'_def_case}) for the integral $I''_{C''_{1\varepsilon_1}}$ we get
\begin{equation}\label{eq:lemm_MLF_int1_I_C1vareps1_case3}
  I''_{C''_{1\varepsilon_1}}=-\int_{-\pi}^{0}P'_{\rho,\mu}(\varepsilon_1,\psi,0,z)d\psi.
\end{equation}

We return to the expression (\ref{eq:lemm_MLF_int1_ICeps_summ_case3}) and let  $\varepsilon\to0$ in this expression. Note that the integrals  $I_{C_\epsilon}$, $I''_{AB}$, $I''_{C''_{1\varepsilon_1}}$ do not depend on $\varepsilon$ and, consequently, with~such a passage to the limit they will not change. As~a result we obtain
\begin{equation*}
  I_{C_\epsilon}=-I''_{AB} - I''_{C''_{1\varepsilon_1}}- \lim_{\varepsilon\to0}I''_{CD} - \lim_{\varepsilon\to0}I_{C_\varepsilon}-\lim_{\varepsilon\to0}I''_{EF}.
\end{equation*}
Using here the expressions (\ref{eq:lemm_MLF_int1_I_Cvareps_case3}), (\ref{eq:lemm_MLF_int1_I_AB_case3}), (\ref{eq:lemm_MLF_int1_I_CD_I_EF_case3}), (\ref{eq:lemm_MLF_int1_I_C1vareps1_case3}) we obtain
\begin{multline*}
  I_{C_\epsilon}=-\int_{1+\epsilon}^{1+\varepsilon_1} K'_{\rho,\mu}(r,\pi,z)dr
  + \int_{-\pi}^{0}P'_{\rho,\mu}(\varepsilon_1,\psi,0,z)d\psi\\
  - \lim_{\varepsilon\to0}\int_{1-\varepsilon_1}^{\varepsilon}K'_{\rho,\mu}(r,\pi,z)dr
  - \lim_{\varepsilon\to0}\int_{\varepsilon}^{1+\epsilon} K'_{\rho,\mu}(r,\pi,z)dr, \quad\Re\mu<1+1/\rho.
\end{multline*}
As a result, performing the passage to the limit  we have
\begin{multline}\label{eq:lemm_MLF_int1_I_Ceps_case3}
  I_{C_\epsilon}=-\int_{1+\epsilon}^{1+\varepsilon_1} K'_{\rho,\mu}(r,\pi,z)dr
  + \int_{-\pi}^{0}P'_{\rho,\mu}(\varepsilon_1,\psi,0,z)d\psi
  - \int_{1-\varepsilon_1}^{0}K'_{\rho,\mu}(r,\pi,z)dr\\
  - \int_{0}^{1+\epsilon} K'_{\rho,\mu}(r,\pi,z)dr, \quad\Re\mu<1+1/\rho.
\end{multline}

We return now to the Mittag-Leffler function.  Assuming that $\delta_{2\rho}=\pi$ the contour of integration $\gamma_\zeta$ in (\ref{eq:lemm_MLF_int1_tmp0}) takes the form
\vspace{12pt}
\begin{equation*}
  \gamma_\zeta=\left\{\begin{array}{l}
                  S_1=\{\zeta:\ \arg\zeta=-\delta_{1\rho}-\pi,\ |\zeta|\geqslant 1+\epsilon\}\\
                  C_\epsilon=\{\zeta:\ -\delta_{1\rho}-\pi\leqslant \arg\zeta \leqslant 0,\ |\zeta|=1+\epsilon\} \\
                  S_2=\{\zeta:\ \arg\zeta=0,\ |\zeta|\geqslant 1+\epsilon\},
                \end{array}\right.
\end{equation*}
and the condition (\ref{eq:z_cond_lemm}) will be written in the form $ \frac{\pi}{2\rho}<\arg z< -\frac{\pi}{2\rho}+ \delta_{1\rho}+\pi$.

By direct calculating (\ref{eq:lemm_MLF_int1_tmp0}) and representing the complex number $\zeta$ in the form $\zeta=re^{i\varphi}$ we get
\begin{multline*}
  E_{\rho,\mu}(z)=\int_{S_1}\frac{\phi_{\rho,\mu}(\zeta,z)}{\zeta-1}d\zeta
  +\int_{C_{\epsilon}} \frac{\phi_{\rho,\mu}(\zeta,z)}{\zeta-1}d\zeta
  +\int_{S_1}\frac{\phi_{\rho,\mu}(\zeta,z)}{\zeta-1}d\zeta\\
  = \int_{\infty}^{1+\epsilon}\left.\frac{\phi_{\rho,\mu}\left(re^{i\varphi},z\right)}{re^{i\varphi}-1} e^{i\varphi}dr\right|_{\varphi=-\delta_{1\rho}-\pi}
  + I_{C_\epsilon}
  +\int_{1+\epsilon}^{\infty}\left.\frac{\phi_{\rho,\mu}\left(re^{i\varphi},z\right)}{re^{i\varphi}-1}e^{i\varphi}dr \right|_{\varphi=\pi-\pi}\\
  =\int_{\infty}^{1+\epsilon}\frac{\phi_{\rho,\mu}\left(re^{i(-\delta_{1\rho}-\pi)},z\right)}{re^{i(-\delta_{1\rho}-\pi)}-1} e^{i(-\delta_{1\rho}-\pi)}dr
  + I_{C_\epsilon}
  +\int_{1+\epsilon}^{\infty}\frac{\phi_{\rho,\mu}\left(re^{i(\pi-\pi)},z\right)}{re^{i(\pi-\pi)}-1} e^{i(\pi-\pi)}dr
\end{multline*}
using the notation (\ref{eq:lemm_MLF_int1_K'_def}) this expression can be written in the form
\begin{equation*}
  E_{\rho,\mu}(z)= \int_{\infty}^{1+\epsilon} K_{\rho,\mu}'(r,-\delta_{1\rho},z)dr+I_{C_\epsilon}+ \int_{1+\epsilon}^{\infty} K_{\rho,\mu}'(r,\pi,z)dr,
\end{equation*}
where the form of functions $K_{\rho,\mu}'(r,\delta,z)$ is defined  as (\ref{eq:K'_case2}). We substitute the integral  $I_{C_\epsilon}$ in this expression by the expression (\ref{eq:lemm_MLF_int1_I_Ceps_case3}). As~a result, we obtain
\begin{multline*}
  E_{\rho,\mu}(z)= \int_{0}^{1-\varepsilon_1} K_{\rho,\mu}'(r,\pi,z)dr
  + \int_{-\pi}^{0}P_{\rho,\mu}^\prime(\varepsilon_1,\psi,0,z)d\psi\\
  +\int_{1+\varepsilon_1}^{\infty} K_{\rho,\mu}^\prime(r,\pi,z)dr
  -\int_{0}^{\infty}  K_{\rho,\mu}^\prime(r,-\delta_{1\rho},z)dr,\quad \Re\mu<1+1/\rho.
\end{multline*}
 Thus, we have obtained the statement of the theorem  for the third~case.

\emph{Case 4.} Consider now the case $1/2<\rho\leqslant1$, $\delta_{1\rho}=\delta_{2\rho}=\pi$. In~this case the segments $\Gamma_1=\{\zeta:\ \arg\zeta=-2\pi,\ \varepsilon\leqslant|\zeta|\leqslant 1+\epsilon\}$ and $\Gamma_2=\{\zeta:\ \arg\zeta=0,\ \varepsilon\leqslant|\zeta|\leqslant 1+\epsilon\}$ of the auxiliary contour $\Gamma$ (see~Figure~\ref{fig:loops_aux}) run along the positive part of a real axis. However, the~argument of these segments  differs by the angle $2\pi$. Thus, the~segment $\Gamma_1$ passes through a singular point $\zeta'=1$ $(|\zeta'|=1,\arg\zeta'=-2\pi)$, and~the segment  $\Gamma_2$ passes through a singular point $\zeta''=1$ ($|\zeta''|=1, \arg\zeta''=0$). Consequently, the~contour $\Gamma$ must be changed so as to bypass these two points leaving them outside the~contour.

We consider the auxiliary contour $\Gamma^{\prime\prime\prime}$ (Figure~\ref{fig:loop_Gamma'''_case4}) consisting of the arc of the circle  $C_\epsilon$ with the center at the origin of coordinates and radius $1+\epsilon$, $\epsilon>0$, the~segment $AB$, the~arc of the circle $C''_{1\varepsilon_1}$ with the center in the point $\zeta''=1\ (|\zeta''|=1, \arg\zeta''=0)$ and radius $0<\varepsilon_1<\min(1,\epsilon)$ defined by the expression (\ref{eq:lemm_MLF_int1_C1vareps1_case3}), the~segment $CD$, the~arc of the circle $C_\varepsilon$ with the center at the origin of coordinates and radius $0<\varepsilon<1-\varepsilon_1$, the~segment $EF$, the~arc of the circle $C'_{1\varepsilon_1}$ with the center in the point $\zeta'=1\ (|\zeta'|=1, \arg\zeta'=-2\pi)$ and radius $0<\varepsilon_1<\min(1,\epsilon)$ having the form (\ref{eq:lemm_MLF_int1_C1vareps1_case2}) and the segment $GH$. The~contour is  traversed in a positive direction. As~a result, the~contour $\Gamma'''$ is written in the~form
\vspace{12pt}
\begin{equation*}\Gamma'''=\left\{
\begin{array}{l}
C_\epsilon =\{\zeta:\ -2\pi\leqslant\arg\zeta\leqslant0,\ |\zeta|=1+\epsilon\},\\
AB=\{\zeta: \arg\zeta=0, 1+\epsilon\geqslant|\zeta|\geqslant 1+\varepsilon_1\},\\
C''_{1\varepsilon_1}=\left\{\zeta:\
    \begin{array}{c}
    \arg\zeta=\arctan\left(\frac{\tau\sin\psi}{\tau\cos\psi+1}\right) +k\pi,\\
    |\zeta|=\sqrt{\tau^2+2\tau\cos\psi+1},\\
    0\geqslant\psi\geqslant-\pi,\ \tau=\varepsilon_1,\ k=0
    \end{array}\right\},\\
CD=\{\zeta:\ \arg\zeta=0, 1-\varepsilon_1\geqslant|\zeta|\geqslant\varepsilon\},\\
C_\varepsilon=\{\zeta:\ 0\geqslant\arg\zeta\geqslant-2\pi,\ |\zeta|=\varepsilon\},\\
EF=\{\zeta:\ \arg\zeta=-2\pi,\ \varepsilon\leqslant |\zeta| \leqslant 1-\varepsilon_1 \},\\
C'_{1\varepsilon_1}=\left\{\zeta:\
    \begin{array}{c}
        \arg\zeta=\arctan\left(\frac{\tau\sin\psi}{\tau\cos\psi+1}\right) +k\pi,\\
        |\zeta|=\sqrt{\tau^2+2\tau\cos\psi+1},\\
        -\pi\geqslant\psi\geqslant-2\pi,\ \tau=\varepsilon_1,\ k=-2,
    \end{array}\right\},\\
GH=\{\zeta:\ \arg\zeta=-2\pi,\ 1+\varepsilon_1\leqslant|\zeta|\leqslant1+\epsilon\}.
\end{array}\right.
\end{equation*}

\begin{figure}[H]
  \centering
  \includegraphics[width=0.4\textwidth]{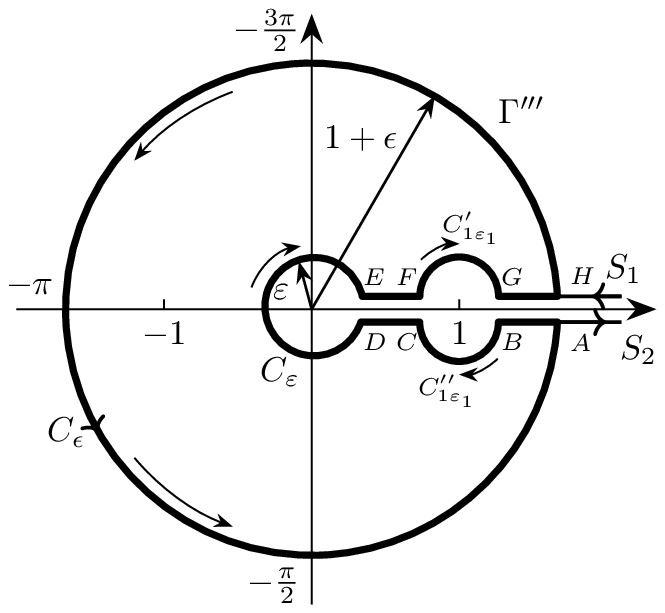}
  \caption{The auxiliary contour of integration $\Gamma'''$.}\label{fig:loop_Gamma'''_case4}
\end{figure}

Next, we consider an auxiliary integral
\begin{equation*}
  I'''=\int_{\Gamma'''}\frac{\phi_{\rho,\mu}(\zeta,z)}{\zeta-1}d\zeta,
\end{equation*}
where $\phi_{\rho,\mu}(\zeta,z)$ is defined by (\ref{eq:phiFun}). By~calculating this integral directly we obtain
\begin{multline*}
  I'''=\int_{\Gamma'''}\frac{\phi_{\rho,\mu}(\zeta,z)}{\zeta-1}d\zeta
  =\int_{C_\epsilon}\frac{\phi_{\rho,\mu}(\zeta,z)}{\zeta-1}d\zeta
  + \int_{AB}\frac{\phi_{\rho,\mu}(\zeta,z)}{\zeta-1}d\zeta
  + \int_{C''_{1\varepsilon_1}}\frac{\phi_{\rho,\mu}(\zeta,z)}{\zeta-1}d\zeta\\
  +\int_{CD}\frac{\phi_{\rho,\mu}(\zeta,z)}{\zeta-1}d\zeta
  +\int_{C_\varepsilon}\frac{\phi_{\rho,\mu}(\zeta,z)}{\zeta-1}d\zeta
  +\int_{EF}\frac{\phi_{\rho,\mu}(\zeta,z)}{\zeta-1}d\zeta
  +\int_{C'_{1\varepsilon_1}}\frac{\phi_{\rho,\mu}(\zeta,z)}{\zeta-1}d\zeta\\
  +\int_{GH}\frac{\phi_{\rho,\mu}(\zeta,z)}{\zeta-1}d\zeta
  = I_{C_\epsilon}+I'''_{AB}+I'''_{C''_{1\varepsilon_1}}+I'''_{CD}+I_{C_\varepsilon}+I'''_{EF}+I'''_{C'_{1\varepsilon_1}} +I'''_{GH}.
\end{multline*}

We immediately note that inside the region bounded by the contour $\Gamma'''$, the~function  $\phi_{\rho,\mu}(\zeta,z)/(\zeta-1)$ is an analytical function of the complex variable $\zeta$. Consequently, according to the Cauchy integral theorem,
\begin{equation*}
  I_{C_\epsilon}+I'''_{AB}+I'''_{C''_{1\varepsilon_1}}+I'''_{CD}+I_{C_\varepsilon}+I'''_{EF}+I'''_{C'_{1\varepsilon_1}} +I'''_{GH}=0.
\end{equation*}
From this it directly follows that
\begin{equation}\label{eq:lemm_MLF_int1_Ceps_summ_case4}
  I_{C_\epsilon}=-I'''_{AB}-I'''_{C''_{1\varepsilon_1}}-I'''_{CD}-I_{C_\varepsilon}-I'''_{EF}-I'''_{C'_{1\varepsilon_1}} -I'''_{GH}.
\end{equation}

Next, we calculate each integral on the right-hand side of this expression. However, these integrals were already calculated by us earlier. The~integral $I'''_{AB}$ corresponds to the integral $I''_{AB}$. Therefore, using (\ref{eq:lemm_MLF_int1_I_AB_case3}) we obtain
\begin{equation*}
  I'''_{AB}=\int_{1+\epsilon}^{1+\varepsilon_1} K_{\rho,\mu}^\prime(r,\pi,z)dr.
\end{equation*}
The integral $I'''_{CD}$ corresponds to the integral $I''_{CD}$. Using (\ref{eq:lemm_MLF_int1_I_CD_I_EF_case3}), we obtain
\begin{equation*}
  I'''_{CD}=\int_{1-\varepsilon_1}^{\varepsilon} K'_{\rho,\mu}(r,\pi,z)dr.
\end{equation*}
The integral $I'''_{C''_{1\varepsilon_1}}$ corresponds to the integral $I''_{C''_{1\varepsilon_1}}$. Using   (\ref{eq:lemm_MLF_int1_I_C1vareps1_case3})
\begin{equation*}
  I'''_{C''_{1\varepsilon_1}}=-\int_{-\pi}^{0}P'_{\rho,\mu}(\varepsilon_1,\psi,0,z)d\psi.
\end{equation*}
The integral $I_{C_\varepsilon}$ was considered by us when dealing with case 1. Using (\ref{eq:IntIC_varepsTo0}) for the case considered, we obtain
\begin{equation*}
  \lim_{\varepsilon\to0}I_{C_\varepsilon}=0,\quad\Re\mu<1+1/\rho.
\end{equation*}
The integral $I'''_{EF}$ corresponds to the integral $I'_{CD}$. Using (\ref{eq:lemma6_I_CD_case2}), we obtain
\begin{equation*}
  I'''_{EF}=\int_{\varepsilon}^{1-\varepsilon_1} K_{\rho,\mu}^\prime(r,-\pi,z)dr.
\end{equation*}
The integral $I'''_{GH}$ corresponds to the integral $I'_{EF}$. Therefore, using (\ref{eq:lemma6_I_EF_case2}) we obtain
\begin{equation*}
  I'''_{GH}=\int_{1+\varepsilon_1}^{1+\epsilon}  K_{\rho,\mu}^\prime(r,-\pi,z)dr.
\end{equation*}
The integral $I'''_{C'_{1\varepsilon_1}}$ corresponds to the integral $I'_{C'_{1\varepsilon_1}}$. Using (\ref{eq:lemma6_I_C1_vareps1_case2}), we obtain
\begin{equation*}
  I'''_{C'_{1\varepsilon_1}}=-\int_{-2\pi}^{-\pi}P_{\rho,\mu}^\prime(\varepsilon_1,\psi,-2,z)d\psi.
\end{equation*}

We get back to the expression (\ref{eq:lemm_MLF_int1_Ceps_summ_case4}) and let  $\varepsilon\to0$ in it. Note that the integrals $I'''_{AB}$, $I'''_{C''_{1\varepsilon_1}}$, $I'''_{C'_{1\varepsilon_1}}$, $I'''_{GH}$ do not depend on  $\varepsilon$. Therefore, with~such a passage to the limit, they do not change. Taking into account the above expressions for the integrals, the~sum  (\ref{eq:lemm_MLF_int1_Ceps_summ_case4}) takes the form
\begin{multline*}
  I_{C_\varepsilon}=-\int_{1+\epsilon}^{1+\varepsilon_1} K_{\rho,\mu}^\prime(r,\pi,z)dr +
  \int_{-\pi}^{0}P'_{\rho,\mu}(\varepsilon_1,\psi,0,z)d\psi
  - \lim_{\varepsilon\to0}\int_{1-\varepsilon_1}^{\varepsilon} K'_{\rho,\mu}(r,\pi,z)dr\\
  - \lim_{\varepsilon\to0} \int_{\varepsilon}^{1-\varepsilon_1} K_{\rho,\mu}^\prime(r,-\pi,z)dr
  + \int_{-2\pi}^{-\pi}P_{\rho,\mu}^\prime(\varepsilon_1,\psi,-2,z)d\psi
  - \int_{1+\varepsilon_1}^{1+\epsilon}  K_{\rho,\mu}^\prime(r,-\pi,z)dr,\quad \Re\mu<1+1/\rho.
\end{multline*}
Performing the passage to the limit in this expression $\varepsilon\to0$ we obtain
\begin{multline}\label{eq:lemm_MLF_int1_I_Ceps_case4}
  I_{C_\varepsilon}=-\int_{1+\epsilon}^{1+\varepsilon_1} K_{\rho,\mu}^\prime(r,\pi,z)dr +
  \int_{-\pi}^{0}P'_{\rho,\mu}(\varepsilon_1,\psi,0,z)d\psi
  - \int_{1-\varepsilon_1}^{0} K'_{\rho,\mu}(r,\pi,z)dr
  - \int_{0}^{1-\varepsilon_1} K_{\rho,\mu}^\prime(r,-\pi,z)dr\\
  + \int_{-2\pi}^{-\pi}P_{\rho,\mu}^\prime(\varepsilon_1,\psi,-2,z)d\psi
  - \int_{1+\varepsilon_1}^{1+\epsilon}  K_{\rho,\mu}^\prime(r,-\pi,z)dr,\quad\Re\mu<1+1/\rho.
\end{multline}

Now we consider the Mittag-Leffler (\ref{eq:lemm_MLF_int1_tmp0}). Taking into account that in the studied case $\delta_{1\rho}=\delta_{2\rho}=\pi$ we get
\begin{equation}\label{eq:MLF_rho1}
E_{\rho,\mu}(z)=\int_{\gamma_\zeta}\frac{\phi_{\rho,\mu}(\zeta,z)}{\zeta-1}d\zeta,
\end{equation}
where the contour $\gamma_\zeta$ takes the form
\begin{equation*}
  \gamma_\zeta=
  \left\{\begin{array}{l}
    S_1=\{\zeta:\ \arg \zeta=-2\pi,\ |\zeta|\geqslant 1+\epsilon\}, \\
    C_\varepsilon=\{\zeta:\ -2\pi \leqslant\arg \zeta \leqslant 0,\  |\zeta|=1+\epsilon,\\
    S_2=\{\zeta:\ \arg \zeta= 0,\ |\zeta|\geqslant 1+\epsilon\},
    \end{array}\right.
\end{equation*}
and the condition (\ref{eq:z_cond_lemm}) is written in the form $\tfrac{\pi}{2\rho}< \arg z< -\tfrac{\pi}{2\rho}+2\pi$.

Directly calculating this integral and representing the complex number  $\zeta$ in the form $\zeta=re^{i\varphi}$ we~have
\begin{multline*}
  E_{\rho,\mu}(z)=  \int_{S_1}\frac{\phi_{\rho,\mu}(\zeta,z)}{\zeta-1}d\zeta
  +\int_{C_\epsilon}\frac{\phi_{\rho,\mu}(\zeta,z)}{\zeta-1}d\zeta
  + \int_{S_2}\frac{\phi_{\rho,\mu}(\zeta,z)}{\zeta-1}d\zeta= \\
  = \int_{\infty}^{1+\epsilon}\left.\frac{\phi_{\rho,\mu}\left(re^{i\varphi},z\right)}{re^{i\varphi}-1} e^{i\varphi}dr\right|_{\varphi=-\pi-\pi}
  + I_{C_\epsilon}
  +\int_{1+\epsilon}^{\infty}\left.\frac{\phi_{\rho,\mu}\left(re^{i\varphi},z\right)}{re^{i\varphi}-1}e^{i\varphi}dr \right|_{\varphi=\pi-\pi}\\
  =\int_{\infty}^{1+\epsilon}\frac{\phi_{\rho,\mu}\left(re^{i(-\pi-\pi)},z\right)}{re^{i(-\pi-\pi)}-1} e^{i(-\pi-\pi)}dr
  + I_{C_\epsilon}
  +\int_{1+\epsilon}^{\infty}\frac{\phi_{\rho,\mu}\left(re^{i(\pi-\pi)},z\right)}{re^{i(\pi-\pi)}-1} e^{i(\pi-\pi)}dr.
\end{multline*}
Now using here (\ref{eq:lemm_MLF_int1_K'_def}) we obtain that the expression may be represented in the form
\begin{equation*}
  E_{\rho,\mu}(z)=\int_{\infty}^{1+\epsilon}K'_{\rho,\mu}(r,-\pi,z)dr+ I_{C_{\epsilon}} +\int_{1+\epsilon}^{\infty} K'_{\rho,\mu}(r,\pi,z)dr.
\end{equation*}

Now here we make use of the representation (\ref{eq:lemm_MLF_int1_I_Ceps_case4}) for the integral $I_{C_\epsilon}$. As~a result, we obtain
\begin{multline}\label{eq:lemm_MLF_int1_E1_case4_tmp0}
  E_{\rho,\mu}(z)=\int_{\infty}^{1+\epsilon}K'_{\rho,\mu}(r,-\pi,z)dr
  -\int_{1+\epsilon}^{1+\varepsilon_1} K_{\rho,\mu}^\prime(r,\pi,z)dr
  + \int_{-\pi}^{0}P'_{\rho,\mu}(\varepsilon_1,\psi,0,z)d\psi
  - \int_{1-\varepsilon_1}^{0} K'_{\rho,\mu}(r,\pi,z)dr\\
  - \int_{0}^{1-\varepsilon_1} K_{\rho,\mu}^\prime(r,-\pi,z)dr
  + \int_{-2\pi}^{-\pi}P_{\rho,\mu}^\prime(\varepsilon_1,\psi,-2,z)d\psi
  - \int_{1+\varepsilon_1}^{1+\epsilon}  K_{\rho,\mu}^\prime(r,-\pi,z)dr
  + \int_{1+\epsilon}^{\infty} K'_{\rho,\mu}(r,\pi,z)dr\\
  = \int_{0}^{1-\varepsilon_1}(K'_{\rho,\mu}(r,\pi,z)-K'_{\rho,\mu}(r,-\pi,z))dr
  +\int_{1+\varepsilon_1}^{\infty} (K'_{\rho,\mu}(r,\pi,z)-K'_{\rho,\mu}(r,-\pi,z))dr \\
  + \int_{-\pi}^{0}P'_{\rho,\mu}(\varepsilon_1,\psi,0,z)d\psi
  + \int_{-2\pi}^{-\pi}P_{\rho,\mu}^\prime(\varepsilon_1,\psi,-2,z)d\psi.
\end{multline}
In the expression derived there is the difference of $K'_{\rho,\mu}(r,\pi,z)-K'_{\rho,\mu}(r,-\pi,z)$. We transform this difference. For~this we use the representation (\ref{eq:K'_case2}) for the function $K'_{\rho,\mu}(r,\delta,z)$.
As a result, we obtain
\begin{multline*}
  K'_{\rho,\mu}(r,\delta,z)-K'_{\rho,\mu}(r,-\delta,z)\\
  =\frac{\rho}{2\pi i}\frac{\exp\left\{(zr)^\rho e^{-i\pi\rho}\cos(\rho\delta)\right\} (zre^{-i\pi})^{\rho(1-\mu)}}{r^2+2r\cos\delta+1}
   \left(e^{i\eta(r,\delta,z)}\left(r+e^{i\delta}\right)- e^{i\eta(r,-\delta,z)}\left(r+e^{-i\delta}\right)\right).
  \end{multline*}
From the definition for the function $\eta(r,\delta,z)$ (\ref{eq:eta_lemm_int0}) it follows that it is  an odd function relative to the parameter $\delta$
$$
\eta(r,-\delta,z)=-\eta(r,\delta,z),
$$

In the total, we have\begin{multline*}
K'_{\rho,\mu}(r,\delta,z)-K'_{\rho,\mu}(r,-\delta,z)\\
 = \frac{\rho}{2\pi i}\frac{\exp\left\{(zr)^\rho e^{-i\pi\rho}\cos(\rho\delta)\right\} (zre^{-i\pi})^{\rho(1-\mu)}}{r^2+2r\cos\delta+1}
  \left(e^{i\eta(r,\delta,z)}\left(r+e^{i\delta}\right)- e^{-i\eta(r,\delta,z)}\left(r+e^{-i\delta}\right)\right)\\
  \frac{\rho}{2\pi i}\frac{\exp\left\{(zr)^\rho e^{-i\pi\rho}\cos(\rho\delta)\right\} (zre^{-i\pi})^{\rho(1-\mu)}}{r^2+2r\cos\delta+1}
  \left(r\left(e^{i\eta(r,\delta,z)}-e^{-i\eta(r,\delta,z)}\right)
  + \left(e^{i(\eta(r,\delta,z)+\delta)}-e^{-i(\eta(r,\delta,z)+\delta)}\right)\right)\\
  = \frac{\rho}{2\pi i}\frac{\exp\left\{(zr)^\rho e^{-i\pi\rho}\cos(\rho\delta)\right\} (zre^{-i\pi})^{\rho(1-\mu)}}{r^2+2r\cos\delta+1}
  (r\sin(\eta(r,\delta,z))+\sin(\eta(r,\delta,z)+\delta))= K_{\rho,\mu}(r,\delta,z).
\end{multline*}
Here the function $K_{\rho,\mu}(r,\delta,z)$ was obtained by us earlier and has the form (\ref{eq:K_coroll_int0}). As~a result, the~expression (\ref{eq:lemm_MLF_int1_E1_case4_tmp0}) takes the form
\begin{equation*}
  E_{\rho,\mu}(z)= \int_{0}^{1-\varepsilon_1}K_{\rho,\mu}(r,\pi,z)dr
  +\int_{1+\varepsilon_1}^{\infty} K_{\rho,\mu}(r,\pi,z)dr
  + \int_{-\pi}^{0}P'_{\rho,\mu}(\varepsilon_1,\psi,0,z)d\psi
  + \int_{-2\pi}^{-\pi}P_{\rho,\mu}^\prime(\varepsilon_1,\psi,-2,z)d\psi.
\end{equation*}
The derived expression proves the theorem completely.
\end{proof}

The proved theorem shows that in the representation (\ref{eq:MLF_int}) at the values of parameters  $\rho, \delta_{1\rho}, \delta_{2\rho}$ satisfying the conditions (\ref{eq:deltaRhoCond_lem}) the integral over the arc of the circle  $C_\epsilon$ of the contour $\gamma_\zeta$ can be replaced with integration with respect to the segments $\Gamma_{1}$ and $\Gamma_2$ of the contour (\ref{eq:loop_Gamma_aux}). Such a transition makes it possible to replace the contour integral with one improper integral over the real variable of the complex-valued function, which simplifies the use and study of the Mittag-Leffler function. The~integral representation of the Mittag-Leffler function obtained in this theorem will be called the integral representation ``B''.

Consider in detail the differences between two derived representations. The~integral representation ``A''  (see (\ref{eq:MLF_int0})) of the Mittag-Leffler function consists of two summands. The~first summand corresponds to the sum of integrals along the half-lines  $S_1$ and $S_2$ and the second summand is the integral along the arc of a circle $C_\epsilon$. Such a representation is not convenient in analytical studies of the Mittag-Leffler function since one has to deal with two integrals. The~representation  ``B'' turns out to be more convenient and it consists of one improper integral  (see~(\ref{eq:MLF_int1})). However, this convenience is fraught with the appearance of constraints imposed on values of the parameters of the function $E_{\rho,\mu}(z)$. As~it was shown in  theorem~\ref{lemm:MLF_int1}, the~representation  ``B'' is true only at values  $\Re\mu<1+1/\rho$. If~$\Re\mu\geqslant1+1/\rho$, then it is necessary to use the representation ``A'' which is the main integral representation for the Mittag-Leffler function. It should also be noted that in the representation  ``B'' with the parameter values $1/2<\rho\leqslant1$ and values $\delta_{1\rho}=\pi$ or $\delta_{2\rho}=\pi$ the segments  $\Gamma_1$ or $\Gamma_2$ of the auxiliary contour $\Gamma$ (see~Figure~\ref{fig:loops_aux}) pass through the pole $\zeta=1$. This leads to the need to deform the contour of integration so as to bypass  the pole. As~a result, in~integral representations (\ref{eq:lemm_MLF_int1_case2}), (\ref{eq:lemm_MLF_int1_case3}), (\ref{eq:lemm_MLF_int1_case4}) there are summands describing the pole bypass along the arcs of the circle  $C'_{1\varepsilon_1}$ and $C''_{1\varepsilon_1}$. In~addition, one has to split the integrals along the half-lines $\Gamma_1+S_1$ and $\Gamma_2+S_2$ into parts to exclude the sections corresponding to the integrals along the arcs of the circle $C'_{1\varepsilon_1}$ and $C''_{1\varepsilon_1}$. All this leads to the appearance of additional terms in the corresponding integral representations. It should also be noted that at parameter values $1/2<\rho\leqslant1, \delta_{1\rho}=\delta_{2\rho}=\pi$ in the representation (\ref{eq:lemm_MLF_int1_case4}) the integral $\int_{0}^{1-\varepsilon_1}K_{\rho,\mu}(r,\pi,z)dr$ corresponds to the sum of integrals along the segments  $CD$ and $EF$ the contour of integration $\Gamma'''$ (see~Figure~\ref{fig:loop_Gamma'''_case4}) the integral $\int_{1+\varepsilon_1}^{\infty}K_{\rho,\mu}(r,\pi,z)dr$ corresponds to the sum of integrals along  $AB+S_2$ and $GH+S_1$. As~one can see, the specified sections of the integration contour go along the positive part of a real axis. As~it was shown in the lemma~\ref{lemm:MLF_SingPoints} at values of parameters $1/2<\rho<1$ or $\rho=1$ and  $\Im\mu\neq0$ or $\rho=1$, $\Im\mu=0$ and $\Re\mu$ is not an integer, the~point $\zeta=0$ is a branch point of the integrand (\ref{eq:lemm_MLF_int1_tmp0}). Consequently, the~indicated sections of the integration contour will go along different sides of the cut of the complex plane $\zeta$. As~a result, $\int_{0}^{1-\varepsilon_1}K_{\rho,\mu}(r,\pi,z)dr\neq0$ and $\int_{1+\varepsilon_1}^{\infty}K_{\rho,\mu}(r,\pi,z)dr\neq0$. In~case, if~$\rho=1$,  $\Im\mu=0$ and $\Re\mu$ is an integer,  lemma~\ref{lemm:MLF_SingPoints} shows that in this case  the point $\zeta=0$ is a regular point. Consequently, the~segments  $CD$ and $EF$, as~well as the half-lines $AB+S_2$ and $GH+S_1$ will go along one straight line of the complex plane, but~in different directions, and~arcs of the circles $C'_{1\varepsilon_1}$ and $C''_{1\varepsilon_1}$ will close. As~a result, we obtain $\int_{0}^{1-\varepsilon_1}K_{1,\mu}(r,\pi,z)dr=0$ and $\int_{1+\varepsilon_1}^{\infty}K_{1,\mu}(r,\pi,z)dr=0$, and~the sum of integrals $\int_{-\pi}^{0}P_{1,\mu}^\prime(\varepsilon_1,\psi,0,z)d\psi+
\int_{-2\pi}^{-\pi}P_{1,\mu}^\prime(\varepsilon_1,\psi,-2,z)d\psi$ corresponds to the integral over the closed contour and will be equal to the residue in the point $\zeta=1$. Thus, we come to the condition of Corollary~\ref{coroll:MLF_case_rho=1}.

The formulas of the integral representation obtained in theorem~\ref{lemm:MLF_int1} are rather lengthy. They can be simplified and reduced to a simpler form.  The~representation ``B'' takes the simplest form in the case when $\delta_{1\rho}$ and $\delta_{2\rho}$ coincide, i.e.,~ $\delta_{1\rho}=\delta_{2\rho}=\delta_\rho$. We will formulate the result obtained in the form of a~corollary.

\begin{corollary}\label{coroll:MLF_int1_deltaRho}
For any real $\rho>1/2$, any complex $\mu$ and  $z$ satisfying the conditions $\Re\mu<1+\frac{1}{\rho}$ and
\begin{equation}\label{eq:argZ_cond_corol_int1_deltaRho}
\frac{\pi}{2\rho}-\delta_\rho+\pi<\arg z<-\frac{\pi}{2\rho}+\delta_\rho+\pi,
\end{equation}
the Mittag-Leffler function can be represented in the~form:
\begin{enumerate}
  \item at any real $\delta_\rho$ satisfying the conditions
  $ \frac{\pi}{2\rho}<\delta_\rho\leqslant\frac{\pi}{\rho}$ if $\rho>1$ and $\frac{\pi}{2\rho}<\delta_\rho<\pi$ if $1/2<\rho\leqslant1$
\begin{equation}\label{eq:MLF_int1_deltaRho}
    E_{\rho,\mu}(z)=\int_{0}^{\infty} K_{\rho,\mu}(r,\delta_\rho,z) dr,
  \end{equation}
  where $K_{\rho,\mu}(r,\delta_\rho,z)$ has the form (\ref{eq:K_coroll_int0}).
  \item at $\delta_\rho=\frac{\pi}{\rho}$ and $\rho>1$
\begin{equation} \label{eq:MLF_int1_piRho}
    E_{\rho,\mu}(z)=\int_{0}^{\infty} K_{\rho,\mu}(r,z) dr,
  \end{equation}
  where $K_{\rho,\mu}(r,z)$ has the form (\ref{eq:K_piRho_coroll_int0}).
\end{enumerate}
\end{corollary}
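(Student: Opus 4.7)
The plan is to specialize the first case of Theorem~\ref{lemm:MLF_int1} to the symmetric situation $\delta_{1\rho}=\delta_{2\rho}=\delta_\rho$, so that the three other (boundary) cases of Theorem~\ref{lemm:MLF_int1} need not be invoked. Setting $\delta_{1\rho}=\delta_{2\rho}=\delta_\rho$ in the conditions (\ref{eq:deltaRhoCond_lem}) yields $\frac{\pi}{2\rho}<\delta_\rho\leqslant\frac{\pi}{\rho}$ for $\rho>1$ and $\frac{\pi}{2\rho}<\delta_\rho<\pi$ for $1/2<\rho\leqslant1$, which are precisely the hypotheses of item~1 of the corollary; likewise the $\arg z$-condition of Theorem~\ref{lemm:MLF_int1} collapses to (\ref{eq:argZ_cond_corol_int1_deltaRho}). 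The representation (\ref{eq:MLF_int1}) then gives
\begin{equation*}
  E_{\rho,\mu}(z)=\int_0^\infty K_{\rho,\mu}(r,-\delta_\rho,\delta_\rho,z)\,dr,
\end{equation*}
with $K_{\rho,\mu}(r,\varphi_1,\varphi_2,z)$ defined by (\ref{eq:K_lemm_int0}).

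The next step is to show that $K_{\rho,\mu}(r,-\delta_\rho,\delta_\rho,z)$ coincides with the compact kernel $K_{\rho,\mu}(r,\delta_\rho,z)$ of (\ref{eq:K_coroll_int0}). This is exactly the algebraic reduction already carried out in the proof of Corollary~\ref{coroll:MLF_int0_deltaRho}: one expands the difference $A_{\rho,\mu}(r,\delta_\rho,-\delta_\rho,z)-A_{\rho,\mu}(r,-\delta_\rho,\delta_\rho,z)$ using the definition (\ref{eq:A_lemm_int0}), pulls out the common factor $\exp\{(zr)^\rho e^{-i\pi\rho}\cos(\rho\delta_\rho)\}(r^2+2r\cos\delta_\rho+1)$, and exploits the oddness $\eta(r,-\varphi,z)=-\eta(r,\varphi,z)$ of the phase (\ref{eq:eta_lemm_int0}) to collapse the four complex exponentials into $2i\bigl(r\sin(\eta(r,\delta_\rho,z))+\sin(\eta(r,\delta_\rho,z)+\delta_\rho)\bigr)$. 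Dividing by the denominator $(r^2+2r\cos\delta_\rho+1)^2$ of (\ref{eq:K_lemm_int0}) and cancelling one factor of the bracket yields (\ref{eq:K_coroll_int0}); this step introduces no further restriction, so item~1 is established.

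For item~2 I set $\delta_\rho=\pi/\rho$; this value is admitted by (\ref{eq:deltaRhoCond_lem}) only when $\rho>1$ (for $\rho=1$ the inequality $\delta_\rho<\pi$ is strict, so $\pi/\rho=\pi$ is excluded, which explains the extra hypothesis). The argument condition (\ref{eq:argZ_cond_corol_int1_deltaRho}) then reads $-\frac{\pi}{2\rho}+\pi<\arg z<\frac{\pi}{2\rho}+\pi$, matching the statement. Substituting $\varphi=\pi/\rho$ in (\ref{eq:eta_lemm_int0}) gives $\eta(r,\pi/\rho,z)=(zr)^\rho e^{-i\rho\pi}\sin\pi+\rho(1-\mu)\pi/\rho=(1-\mu)\pi$, a constant independent of $r$ and $z$, whereupon (\ref{eq:K_coroll_int0}) simplifies to (\ref{eq:K_piRho_coroll_int0}).

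There is no genuine obstacle in the argument; it is essentially a bookkeeping exercise that mirrors, for the improper-integral representation ``B'', the simplification already performed in Corollary~\ref{coroll:MLF_int0_deltaRho} for the representation ``A''. The only minor point to verify carefully is that the symmetric choice $\delta_\rho=\pi/\rho$ is compatible with (\ref{eq:deltaRhoCond_lem}) precisely when $\rho>1$, so that item~2 is not applicable in the range $1/2<\rho\leqslant1$.
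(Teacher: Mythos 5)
Your proposal is correct and follows essentially the same route as the paper: specialize item~1 of Theorem~\ref{lemm:MLF_int1} to $\delta_{1\rho}=\delta_{2\rho}=\delta_\rho$, reuse the kernel reduction $K_{\rho,\mu}(r,-\delta_\rho,\delta_\rho,z)=K_{\rho,\mu}(r,\delta_\rho,z)$ already established in the proof of Corollary~\ref{coroll:MLF_int0_deltaRho}, and then set $\delta_\rho=\pi/\rho$ with $\eta(r,\pi/\rho,z)=(1-\mu)\pi$ for item~2. Your justification for excluding $\rho=1$ in item~2 (strictness of the bound $\delta_\rho<\pi$ in (\ref{eq:deltaRhoCond_lem})) is the formal counterpart of the paper's geometric explanation that at $\rho=1$ the segments $\Gamma_1,\Gamma_2$ would pass through the pole $\zeta=1$.
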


\begin{proof}
1) According to theorem~\ref{lemm:MLF_int1} the representation (\ref{eq:MLF_int1}) is true at values $\delta_{1\rho}$ and $\delta_{2\rho}$ satisfying the conditions (\ref{eq:deltaRhoCond_lem}). In~this corollary it is assumed that $\delta_{1\rho}=\delta_{2\rho}=\delta_\rho$. In~view of this assumption, the~conditions (\ref{eq:deltaRhoCond_lem}) take the form
$\frac{\pi}{2\rho}<\delta_\rho\leqslant\frac{\pi}{\rho}$ if  $\rho>1$ and $\frac{\pi}{2\rho}<\delta_\rho<\pi$ if $1/2<\rho\leqslant1$. The~representation (\ref{eq:MLF_int1}) will be written in the form
\begin{equation*}
  E_{\rho,\mu}(z)=\int_{0}^{\infty} K_{\rho,\mu}(r,-\delta_\rho,\delta_\rho,z)dr.
\end{equation*}

From this it is clear that it is necessary to consider how the kernel function $K_{\rho,\mu}(r,-\delta_\rho,\delta_\rho,z)$ is transformed at the parameter values specified. This issue was already considered when proving item~1 of Corollary~\ref{coroll:MLF_int0_deltaRho} (see~(\ref{eq:corol_Int0_K_tmp0})). It was shown there that
\begin{multline*}
  K_{\rho,\mu}(r,-\delta_\rho,\delta_\rho,z)=K_{\rho,\mu}(r,\delta_\rho,z)=\\
  =\frac{\rho}{\pi}(zre^{-i\pi})^{\rho(1-\mu)}\exp\left\{(zre^{-i\pi})^\rho \cos(\rho\delta_\rho)\right\} \frac{r\sin(\eta(r,\delta_\rho,z))+ \sin(\eta(r,\delta_\rho,z)+\delta_\rho)}{r^2+2r\cos\delta_\rho+1}.
\end{multline*}
The first part of the corollary is~proved.

2) Consider the case $\delta_\rho=\pi/\rho$. Note immediately that the value of an angle $\delta_\rho$ cannot exceed $\pi$. Consequently, this case is implemented only at values $\rho\geqslant1$. However, the~value $\rho=1$ should be excluded from consideration. In~fact, at~$\rho=1$ the segments $\Gamma_1$ and $\Gamma_2$ of the auxiliary contour $\Gamma$ (see Figure~\ref{fig:loops_aux}) pass through the pole $\zeta=1$. Consequently, it is necessary to deform the contour $\Gamma$ so as to bypass this pole.  However, this has already been done by us in item 4 of  theorem~\ref{lemm:MLF_int1}.   By~putting the value $\delta_\rho=\pi/\rho$ in the representation (\ref{eq:MLF_int1_deltaRho}) we obtain
\vspace{12pt}
\begin{equation*}
  E_{\rho,\mu}(z)=\int_{0}^{\infty} K_{\rho,\mu}(r,\pi/\rho,z)dr.
\end{equation*}

Using the definition of the function $\eta(r,\psi,z)$ (see~(\ref{eq:eta_lemm_int0})) we obtain $\eta(r,\pi/\rho,z)=(1-\mu)\pi$.
Now using this result when calculating the kernel function $K_{\rho,\mu}(r,\pi/\rho,z)$ and introducing the notation $K_{\rho,\mu}(r,\pi/\rho,z)\equiv K_{\rho,\mu}(r,z)$ we obtain
\begin{equation*}
  E_{\rho,\mu}(z)=\int_{0}^{\infty} K_{\rho,\mu}(r,z)dr,
\end{equation*}
where $K_{\rho,\mu}(r,z)$ has the form (\ref{eq:K_piRho_coroll_int0}).
\end{proof}

\section{Conclusions}

It has been shown in the paper that when passing from the integral representation formulated in  theorem~\ref{lemm:MLF_int} to integration over real variables, the~integral representation of the Mittag-Leffler function can be written in two forms: the representation ``A'' and ``B''. The~integral representation ``A'' was given in theorem~\ref{lemm:MLF_int0} and the representation ``B'' in theorem~\ref{lemm:MLF_int1}. Each of these representations has its own advantages and drawbacks. The~representation ``A'' is true for any complex $\mu$ and at values $1/2<\rho\leqslant1$ there is no need to bypass a pole in the point $\zeta=1$. This excludes the necessity to consider particular cases and greatly simplifies the presentation itself. These facts are the advantages of the representation ``A''. The~disadvantages of this representation include the fact that it consists of the sum of two integrals: improper and definite. This leads to certain difficulties when working with this representation, since one has to study the behavior of these two integrals. The~representation ``B'' is valid only for the parameter values $\mu$ satisfying the condition $\Re\mu<1+1/\rho$. Particular cases also arise at values $1/2<\rho\leqslant1$ in which one has to bypass a singular point $\zeta=1$. That is why, in~the problems in which there is a necessity to investigate these particular cases this form of representation is not very convenient. These facts are the disadvantage of the representation ``B''. In~all other cases the representation ``B'' consists of one improper integral and turns out to be more convenient in use than the representation ``A''. This is the advantage of the representation ``B''.

The forms of the representations ``A'' and ``B'' obtained in  theorems~\ref{lemm:MLF_int0}~and~\ref{lemm:MLF_int1} are given for the case of four parameters $\rho, \mu, \delta_{1\rho}, \delta_{2\rho}$. The~parameters $\rho$ and $\mu$ are referred to the Mittag-Leffler function directly, and~parameters $\delta_{1\rho}$ and $\delta_{2\rho}$  describe the integration contour. As~a result, in~the general case, the~integral representations ``A'' and ``B'' of the Mittag-Leffler function turn out to be four-parameter. For~definiteness, we call this four-parameter description parametrization 1. However, in~parametrization 1 though the representations ``A'' and ``B'' of the function $E_{\rho,\mu}(z)$ have a more general form but they are awkward enough. The~representations ``A'' and ``B'' take a simpler form if $\delta_{1\rho}=\delta_{2\rho}=\delta_{\rho}$. In~this case, the integral representations ``A'' and  ``B'' can be described in three parameters: $\rho, \mu, \delta_\rho$. We call this parametrization 2. An~even simpler view of the form ``A'' and ``B'' is adopted if $\delta_{1\rho}=\delta_{2\rho}=\pi/\rho$. In~this case integral representations of the Mittag-Leffler function can be described in two parameters $\rho$ and $\mu$. We call this case of parametrization---parametrization 3. Thus, the~integral representations of the Mittag-Leffler function in the form ``A'' and parameterizations 2 and 3 are given in Corollary~\ref{coroll:MLF_int0_deltaRho}, and~the representation ``B'' in parametrizations 2 and 3 in Corollary~\ref{coroll:MLF_int1_deltaRho}.

Taking into account of  the geometric meaning of the parameters $\delta_{1\rho}$ and $\delta_{2\rho}$ one can give the geometric interpretation of three introduced parametrizations of the representations ``A'' and ``B''. In~fact, the~parameters $\delta_{1\rho}$ and $\delta_{2\rho}$ describe an inclination angle of half-lines  $S_1$ and $S_2$ in the contour $\gamma_\zeta$ (see~Figure~\ref{fig:loop_gammaZeta}). Thus, in~case if the half-lines $S_1$ and $S_2$ independently lie in the range of angles   $\tfrac{\pi}{2\rho}<\delta_{1\rho}\leqslant\min\left(\pi,\tfrac{\pi}{\rho}\right),\quad \tfrac{\pi}{2\rho}<\delta_{2\rho}\leqslant\min\left(\pi,\tfrac{\pi}{\rho}\right)$,  then we have parametrization 1. If~these half-lines lie symmetrically in relation to the positive part of a real axis, then we obtain parametrizations 2 and 3. With~this, parametrization 3 corresponds to the angle of inclination $\delta_{1\rho}=\delta_{2\rho}=\pi/\rho$.

In conclusion it should be pointed out that the representations for the function $E_{\rho,\mu}(z)$, formulated in Theorems~\ref{lemm:MLF_int0}~and~\ref{lemm:MLF_int1} are valid for the vaues  $\arg z$ satisfying the condition $\frac{\pi}{2\rho}-\delta_{2\rho}+\pi<\arg z<-\frac{\pi}{2\rho}+\delta_{1\rho}+\pi$. This constraint appears as a result of the use of the proof of  theorem~\ref{lemm:MLF_int} of the integral representation for the gamma function obtained in the work~\cite{Saenko2020} (see Appendix in~\cite{Saenko2020}). The~presence of this constraint somewhat narrows the possibilities of using the obtained integral representations of the function $E_{\rho,\mu}(z)$. Nevertheless, it is possible to get rid of this constraint and expand the range of admissible values $\arg z$ to the entire complex plane. However, this requires additional studies that  are beyond the scope of this~paper.

\AcknowledgementSection
This work was supported by the Russian Foundation for Basic Research (projects No 19-44-730005, 20-07-00655, 18-44-730004) and the Ministry of Science and Higher Education of the Russian Federation (projects No. 0830-2020-0008, RFMEFI60719X0301).

The author thanks to M.~Yu.~Dudikov for translation the article into~English.


\end{document}